\long\def\symbolfootnote[#1]#2{\begingroup%
\def\thefootnote{\fnsymbol{footnote}}\footnote[#1]{#2}\endgroup}
\DeclareMathAccent{\widehat}{\mathord}{largesymbols}{"62}
\DeclareMathAccent{\widetilde}{\mathord}{largesymbols}{"65}
\def\pth#1{\left(#1\right)}
\def\acc#1{\left\{#1\right\}}
\def\eeX{\mathbb{X}}
\def\e1{1\!\!1}
\def\ebo{\textrm{\mathversion{bold}$\mathbf{\beta^0}$\mathversion{normal}}}
\def\eb{\textrm{\mathversion{bold}$\mathbf{\beta}$\mathversion{normal}}}  
\def\ed{\textrm{\mathversion{bold}$\mathbf{\delta}$\mathversion{normal}}}
\def\eU{\textrm{\mathversion{bold}$\mathbf{\Upsilon}$\mathversion{normal}}} 
\def\eE{\mathbb{E}}
\def\e1{1\!\!1}
\def\eeX{\mathbb{X}}
\theoremstyle{plain}
\newtheorem{theorem}{Theorem}[section]
\newtheorem{lemma}{Lemma}[section]
\newtheorem{remark}{Remark}[section]
\newcommand{\beqn}{\begin{eqnarray*}}
\newcommand{\eeqn}{\end{eqnarray*}}
\def\ee1{\textrm{\mathversion{bold}$\mathbf{\varepsilon}$\mathversion{normal}}}
\def\eu{\mathbf{{u}}}
\newcommand{\N}{\mathbb{N}}
\newcommand{\R}{\mathbb{R}}
\newcommand{\PP}{\mathbb{P}}
\def\eX{\mathbf{X}}
\def\ex{\mathbf{x}}
\newcommand{\Var}{\mathbb{V}\mbox{ar}\,}
\def\argmin{\mathop{\mathrm{arg\,min}}} 
\begin{document}

\title {Automatic selection by penalized asymmetric $L_q$-norm in an high-dimensional model with grouped variables }
 \date{}
\maketitle

\author{
\begin{center}
 Angelo ALCARAZ and Gabriela CIUPERCA
 \footnote{\small {CONTACT: angelo.alcaraz@ens-paris-saclay.fr,  Gabriela.Ciuperca@univ-lyon1.fr}}\\
\small{ENS Paris-Saclay, 4 Av. des Sciences, 91190 Gif-sur-Yvette
	\\
	Institut Camille Jordan,  UMR 5208, Universit\'e Claude Bernard Lyon 1, France}
\end{center}
}


\begin{abstract}
  The paper focuses on the automatic selection of the grouped explanatory variables in an high-dimensional model, when the model errors are asymmetric. After introducing the model and notations, we define the adaptive group LASSO expectile estimator for which we prove the oracle properties: the sparsity and the asymptotic normality. 
 Afterwards, the results are generalized by considering the asymmetric $L_q$-norm loss function. The theoretical results are obtained in several cases with respect  to the number of variable groups. This number can be fixed or dependent on the sample size $n$,  with the possibility that it is of the same order as $n$. Note that these  new estimators allow us to consider weaker assumptions on the data and on the model errors than the usual ones. Simulation study demonstrates the competitive performance of the proposed penalized expectile regression, especially when the   samples size  is close to the number of explanatory variables and model  errors are asymmetrical. An application on air pollution data is considered.
\end{abstract}

\noindent \textbf{Keywords}: expectile, asymmetric $L_q$-norm, grouped variables, high-dimension, oracle properties.


\section{Introduction}
\label{Intro}
With advances in computing and data collection, we are increasingly faced with handling problems of high-dimensional models with grouped explanatory variables for which an automatic selection of relevant groups must be performed. 
For many real applications, the selection of the relevant grouped variables  is very important to the prediction performance of the response variable and of the model parameters.  
Several methods have been proposed in the literature for automatic selection of the variables and then of the groups of relevant variables, by penalizing the loss function with an adaptive penalty of  LASSO type. The loss term is to be chosen according to the assumptions on the model errors while the penalty term aims to select significant (group of) explanatory variables.  Let us then give some recent bibliographical  references on this topic. 
 For the least square (LS) loss function with adaptive group LASSO penalty, \cite{Wang-Leng-08} prove the convergence rate and the oracle properties of the associated estimator when the group number of explanatory variables is fixed. These results were extended in \cite{Zhang-Xiang-16} where the number $p$ of groups depends on the sample size $n$ but with $p$ of order strictly less than $n$. For a quantile model  with grouped variables, \cite{Ciuperca.2016}, \cite{Ciuperca.2020}  automatically select the groups of relevant variables by adaptive LASSO and adaptive elastic-net methods, respectively. Based on the PCA and PLS $n^{1/2}$-consistent estimators corresponding to adaptive weights, \cite{Mendez.2021} study the sparsity of the adaptive group LASSO quantile estimator. On the other hand, \cite{Wang-Wang.14} study the adaptive LASSO estimators for generalized linear model (GLM), while \cite{Wang-Tian-19} consider the grouped variables for a GLM, their results including the case $p>n$. \cite{Zhou.Zhou.2019} prove the oracle inequalities for the estimation and prediction error of overlapping group Lasso method in the GLMs. 
 Concerning the computational aspects, when the LS loss function is penalized with $L_1$-norm for the subgroup of coefficients and with $L_2$-norm or $L_1$-norm for fused coefficient subgroups, \cite{Dondelinger.2000} present two co-ordinate descendent algorithm for calculating the corresponding estimators. \\
 Emphasize now that for a model with asymmetric errors, the LS estimation method is not appropriate, while that quantile  makes the inference more difficult because of the non differentiability of the loss function. Combining the ideas of the LS method to those of the   quantile,   the expectile estimation method can be considered, with  the advantage  that the loss
 function is differentiable and then the theoretical study is more amenable,   the numerical computation being simplified (a  comparison between quantile  and expectile models can be found in  \cite{Waltrup.15}).  Remark also that the quantile model is a generalization of the median model and characterizes the tail behaviour of a distribution, while the expectile method is a generalization  of least squares. The reader can  found asymptotic properties of the sample expectiles in   \cite{Holzmann.2016}. The automatic selection of the relevant variables in a model with ungrouped explanatory variables was realized in \cite{Liao.2018}, \cite{Ciuperca.2021} by the adaptive LASSO expectile estimation method, results generalized afterwards by \cite{Hu.Chen.2021} for the asymmetric $L_q$-norm loss function.\\
 The present paper generalizes the these last three listed papers, for a model with grouped explanatory variables, the number $p$ of groups being either fixed or dependent on $n $. The convergence rates and the asymptotic distributions   of the estimators depend on the size order of  $p$ with respect to $n$. Note that our results remain valid when $p$ is of the same order as $n$. It should also be noted that the proposed methods  encounter fewer numerical problems than for  to the non-derivable  quantile loss function, proposed and studied in \cite{Ciuperca.2016}, \cite{Ciuperca.2020}. The simulation study and application on  real data show that  our proposed adaptive group  LASSO methods have good performance.\\
 The remainder of the paper is organized as follows. Section \ref{Sect_model} introduces the model and general notations. Section \ref{Sect_expectile} defines and studies the adaptive group LASSO expectile estimator firstly when the number $p$ of groups is fixed and afterwards when $p$ depends on the sample size $n$. The method and the results are generalized in Section \ref{Sect_generalisation} for asymmetric $L_q$-norm loss function, where convergence rate and oracle properties are stated for the adaptive group LASSO $L_q$-estimator. Section \ref{Sect_simu} presenting simulation results is followed by an application on real data in Section \ref{Sect_appli}. The theoretical result proofs are relegated in Section \ref{Sect_proofs}.
\section{Model and notations}
\label{Sect_model}
In this section, we introduce the studied statistical model and some general notations.\\
We give some notations before presenting the statistical model. All vectors and matrices are denoted by bold symbols and all vectors are column. For a vector $\bm{v}$, we denote by $\bm{v}^\top$ its transposed, by $\| \bm{v} \|_2 $ its euclidian norm, by $\|\textbf{v}\|_1$   and $\|\textbf{v}\|_\infty$, its $L_1$,   $L_\infty$ norms, respectively.
For a positive definite matrix $\bm{M}$, we denote by $\delta_{\min}(\bm{M})$ and $\delta_{\max}(\bm{M})$ its smallest and largest eigenvalues, respectively.
We will also use the following notations: if $U_n$ and $V_n$ are random variable sequences, $V_n=o_\PP(U_n)$ means that $\underset{n \rightarrow \infty}{\text{lim}} \mathbb{P}(|  {U_n}/{V_n} | >e)=0 $ for all $ e >0$ .  Moreover, $V_n=O_\PP(U_n)$ mean that there exists $ C >0 \, \text{so that} \, \underset{n \rightarrow \infty}{\text{lim}} \mathbb{P}(|  {U_n}/{V_n} | >C )<e  $ for all $ e>0$.  If $(a_n)_{n \in \N }$, $(b_n)_{n \in \N}$ are two positive deterministic sequences we denote by $a_n \gg b_n$ when $\lim_{n \rightarrow \infty} a_n/b_n=\infty$. We use $\overset{\mathcal{L}}{\underset{n \rightarrow \infty}{\longrightarrow}}$,$\overset{\mathbb{P}}{\underset{n \rightarrow \infty}{\longrightarrow}}$ to represent the convergence in distribution and in probability.    For an event $E$, $\e1_{E}$ denotes the indicator function that the event $E$ happens. For a real $x$, $\lfloor x \rfloor$ is the integer part of $x$.  For an index set ${\cal A}$, let us denote either by $|{\cal A}|$ or by $Card({\cal A})$  the cardinality of ${\cal A}$ and by  ${\cal A}^c$ its complementary set. 
Throughout this paper, $C$ will always denote a generic constant, not depending on the size $n$ and its value is not of interest.
We will denote by $\textbf{0}_k$ the zero $k$-vector. When it is not specified, the convergence is as $n \rightarrow \infty$.
\newline
We consider the following model with $p$ groups of explanatory variables:
\begin{equation}
Y_i=\sum_{j=1}^p \textbf{X}_{ij}^\top\bm{\beta_j}+\varepsilon_i=\mathbb{X}_i^\top\eb + \varepsilon_i, \qquad i=1, \cdots, n,
\label{eq0}
\end{equation}
where  $Y_i$ is  the response variable, $\varepsilon_i$ is the model error, both being random variables.  For each group $j \in \{1,...,p\}$, the  vector of parameters is $\eb_j=(\beta_{j1},...,\beta_{jd_j}) \in \mathbb{R}^{d_j}$ and the design for observation $i $ is the $d_j$-vector $\textbf{X}_{ij}$. In the other words, the vector with all the coefficients is $\eb =(\eb_1,...,\eb_p)$ related to the vector with all explanatory variables $\mathbb{X}_i=(\textbf{X}_{i1},...,\textbf{X}_{ip})$.
State now that for $j=1, \cdots, p$, we denote by $\eb_j^0=(\beta_{j1}^0,...,\beta_{jd_j}^0)$ 
the true (unknown) value of the coefficient parameters $\eb_j$ corresponding to the $j$-th group of explanatory variables. For observation $i$, we denote by $X_{ij,k}$ the $k$th variable of the $j$th group. Emphasize that the relevant groups of explanatory variables correspond to the non-zero vectors. More precisely, without loss of generality, we suppose that the first $p^0 \, ( p^0 \leq p)$  groups of variables are relevant:
\begin{equation}
\label{E2.1}
\| \eb_j^0 \|_2 \ne 0 ,\quad \forall j \leq p^0 \: \text{and} \:  \| \eb_j^0 \|_2 = 0 ,\quad \forall j > p^0 . 
\end{equation}
Let's take: $r\equiv \sum_{j=1}^p d_j$, $ r^0\equiv \sum_{j=1}^{p^0} d_j $. 
So, $r$ is the total number of parameters. The numbers  $r^0$ and $p^0$ are unknown, while $r$ and $p$ are known. For model (\ref{eq0}), we define the index set of non-zero true group parameters:
$$
\mathcal{A} \equiv\{ j \in \{1,...,p\}; \; \| \eb_j^0 \|_2 \ne 0 \}, \quad \eb_{\mathcal{A}} \equiv  \{ \eb_j, j \in \mathcal{A} \}.
$$
Taking into account model (\ref{E2.1}), we have $\mathcal{A}=\{1,...,p^0\}$. Throughout this paper, we denote by $\eb_{\mathcal{A}}$ the non-zero sub-vectors of $\eb$ which contains all sub-vectors $\eb_j$, with $j \in {\cal A}$.
For any  $j \in {\cal A}$, the corresponding group of explanatory variables  is relevant, while for $j \in {\cal A}^c$, the group of variables  is irrelevant. In the applications, since $\ebo$ is unknown, then  ${\cal A}$ is also unknown. 
We denote by  $\mathbb{X}_{i,\mathcal{A}}$  the $r^0$-vector which contains the elements $\textbf{X}_{ij}$ with $j \in \{1,...,p^0\}$.
\section{Adaptive group LASSO expectile estimator}
\label{Sect_expectile}
In this section we introduce and study the automatic selection of the relevant group of explanatory variables by penalizing the expectile loss function with an adaptive $L_{2,1}$-norm. Two cases will be considered: first when the number $p$ of variable groups is fixed and afterwards when it is dependent on the number $n$ of observations. 
For a fixed $\tau \in ]0,1[$, we define the expectile function $\rho_{\tau}: \R \rightarrow \mathbb{R}^+$ :
\begin{equation}
\label{function_expectile}
\rho_{\tau}(u)\equiv u^2 |\tau-\e1_{u<0} | .
\end{equation}
The value $\tau$ is called expectile index and $\rho_\tau$ is the expectile function of order $\tau$. We also define: $\psi_\tau(t) \equiv \rho_{\tau}(\varepsilon\def\e1{1\!\!1}-t) $, $
g_{\tau}(\varepsilon)\equiv \psi_\tau^{'}(0)=-2\varepsilon(\tau \e1_{\varepsilon\geq 0}+(1-\tau)\e1_{\varepsilon<0})$,  $ h_{\tau}(\varepsilon) \equiv\psi_\tau^{''}(0)=2(\tau \e1_{\varepsilon \geq 0}+(1-\tau)\e1_{\varepsilon<0})$, $\sigma^2_{g_{\tau}} \equiv \Var[g_{\tau}(\varepsilon)]$,  $ \mu_{h_{\tau}} \equiv\mathbb{E}[h_{\tau}(\varepsilon)]$.
We can then define the expectile estimator:
\begin{equation}
\label{E2.2}
\widetilde{\eb}_n \equiv \underset{\eb \in \mathbb{R}^r}{\text{arg min}} \;  \mathcal{G}_n(\eb), \quad \textrm{with} \quad \mathcal{G}_n(\eb) \equiv  \sum_{i=1}^n \rho_{\tau}(Y_i-\mathbb{X}_i^\top\eb).
\end{equation}
The expectile estimator $\widetilde{\eb}_n$ is written as $\widetilde{\eb}_n=(\widetilde{\eb}_{n;1}, \cdots , \widetilde{\eb}_{n;p})$. 
Remark that for $\tau=1/2$, we find the classical least square (LS) estimator.
Unfortunately, the estimator  $\widetilde{\eb}_n$ doesn't allow automatic selection of groups of variables. In order to select the significant explanatory variables, we would need to perform some hypothesis tests, which can be tedious if $p$ is large. In order to overcome this inconvenience, \cite{Zou.2006} proposed in the particular case $\tau=1/2$ and ungrouped explanatory variables, under specific assumptions on the design, the adaptive LASSO estimator which automatically selects variables.
Inspired by this idea, in this section   we would like to select the groups of relevant  variables. 
Then, we will introduce an estimator, denoted by $\widehat{\eb}_n=(\widehat{\eb}_{n;1}, \cdots, \widehat{\eb}_{n;p})$, minimizing the expectile process (\ref{E2.2}) penalized by a LASSO adaptive term.  The main objective of this section is to  study the asymptotic properties of adaptive group LASSO expectile estimator (\textit{ag}\_$\mathcal{E}$) defined by:
\begin{equation}
\label{bebe}
\widehat{\eb}_n \equiv \underset{\eb \in \mathbb{R}^r}{\argmin}\mathcal{Q}_n(\eb),
\end{equation}
with $\mathcal{Q}_n(\eb)$ the penalized expectile process:
$$
\mathcal{Q}_n(\eb) \equiv n^{-1} \sum_{i=1}^n \rho_{\tau}(Y_i-\mathbb{X}_i^\top\eb)+\lambda_n \sum_{j=1}^p \widehat{\omega}_{n;j} \| \eb_j \|_2,
$$
where the weights $\widehat{\omega}_{n;j}$ are: $
\widehat{\omega}_{n;j} \equiv\| \widetilde{\eb}_{n;j} \|_2 ^{-\gamma}, \quad \gamma >0$.
The weights $\widehat{\omega}_{n;j} $ depend on the expectile estimators corresponding to the $j$-th group of the explanatory variables. 
The sequence $(\lambda_n)_{n \in \mathbb{N}}$ is called tuning parameter and $\gamma$ is a known positive constant. 
Corresponding to $\widehat{\eb}_n$, we define the index set of the non null adaptive group LASSO expectile estimators:
$$
\widehat{\mathcal{A}}_n \equiv \{ j \in \{1,...,p\}; \; \| \widehat{\eb}_{n;j} \|_2 \ne 0 \}.
$$
The set $\widehat{\mathcal{A}}_n$ is an estimator of ${\cal A}$ and $|\widehat{\mathcal{A}}_n|$ an estimator for the number $p^0$ of significant groups of variables.
Two cases will be considered in this section: one where $p$ is constant and one where $p$ is depending of $n$.
Remark that    $\widehat{\eb}_n$  is a generalization of the estimator for non grouped explanatory variables ($d_j=1$ for any $j=1, \cdots , p$) proposed by  \cite{Liao.2018} when $p$ is fixed and by \cite{{Ciuperca.2020}} for $p$ depending on $n$.\\
We present now the assumptions, on the model errors and on the design, that will be necessary in this section. The assumptions on $(\lambda_n)$ and $\gamma$ will be presented in each of the next two subsections, depending on whether or not $p$ depends on $n$.\\
The model errors $(\varepsilon_i)_{1\leq i \leq n}$ satisfy the following assumption:
\begin{description}
	\item \textbf{(A1)} $(\varepsilon_i)_{i \leq n}$ are independent, identically distributed, having 0 as their $\tau$th expectile, with a positive density, continuous near 0.
	We also suppose $\mathbb{E}(\varepsilon_i^4)< \infty$. 
\end{description}
Two assumptions are considered for the deterministic design $(\mathbb{X}_i)_{1\leq i \leq n}$:
\begin{description}
	\item \textbf{(A2)}   
	$  \max_{1 \leqslant i \leqslant n} \| \eeX_i \|_\infty  \leq C_0$ for some constant $C_0>0$.
	\item \textbf{(A3)} We denote by $\eU_n \equiv n^{-1} \sum_{i=1}^n \mathbb{X}_i \mathbb{X}_i^\top$. 
	There exists two constants $ 0<m_0 \leq M_0 < \infty $, so that $m_0 \leq \delta_{\min}( \eU_n) \leq \delta_{\max}( \eU_n) \leq M_0$.
\end{description}
Assumption (A3) is classical for linear regression, while assumption (A1) is typical in the context of expectile regression (\cite{Liao.2018}, \cite{Ciuperca.2021}) and it implies $\eE[\varepsilon_i(\tau \e1_{\varepsilon_i \geq 0}+(1-\tau)\e1_{\varepsilon_i<0})]=-\eE[g_\tau(\varepsilon_i)]=0$. Hence, the expectile index $\tau$ will be fixed throughout in this section, such that $\eE[g_\tau(\varepsilon_i)]=0$.  Assumption (A2)   is  commonly considered   in high-dimensional models when the number of parameters diverges with $n$ (\cite{Wang-Wang.14}, \cite{Zhao-Chen-Zhang.18}, \cite{Wang-Tian-19}, \cite{Ciuperca.2021}, \cite{Hu.Chen.2021}, \cite{Zhou.Zhou.2019}).\\
Note that in the particular case $\tau=1/2$ of the LS loss function and $p$ fixed, we obtain the  adaptive group LASSO LS estimator proposed and studied by \cite{Wang-Leng-08}. 
The proofs of the results presented in the following two subsections can be found in Subsection \ref{proof_expectile}. 
\subsection{Fixed $p$ case}
\label{subsect_pfix}
In order to study the properties of $\widehat{\eb}_n$ when number $p$ of groups is fixed,  the following two conditions are considered for the tuning parameter sequence $(\lambda_n)_{n \in \N} $, $\lambda_n \underset{n \rightarrow +\infty}{\longrightarrow} 0$ and on  $ \gamma$:
\begin{equation}
\label{E3.1}
\begin{split}
 &(a) \quad n^{1/2} \lambda_n  \underset{n \rightarrow +\infty}{\longrightarrow} 0,  \\
&(b) \quad n^{(\gamma+1)/2} \lambda_n \underset{n \rightarrow +\infty}{\longrightarrow} \infty . 
\end{split}
\end{equation}
The two conditions of (\ref{E3.1}) are classical for adaptive LASSO penalties when the number of coefficients is fixed (see for example \cite{Wu.Liu.2009}, \cite{Ciuperca.2016}, \cite{Liao.2018}). 
Observe also that Assumption (A3) implies when $p$ is fixed that there exists  a   positive definite $r\times r$ matrix $\eU$  so that 
\begin{equation} n^{-1} \sum_{i=1}^n \mathbb{X}_i \mathbb{X}_i^\top \underset{n \rightarrow +\infty}{\longrightarrow} \eU. 
\label{Gg}
\end{equation}
Let us first find the  convergence rate of the adaptive group LASSO expectile estimator $\widehat{\eb}_n$.
\begin{lemma} 
\label{L1}
Under assumptions (A1), (A2), (A3) and (\ref{E3.1})(a), we have: $\| \widehat{\eb}_n - \eb^0 \|_2 = O_\PP(n^{-1/2})$.
\end{lemma}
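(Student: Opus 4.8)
The plan is to run the standard convexity argument for penalized M-estimators. Since $u\mapsto\rho_\tau(u)$ is convex and the penalty is a sum of Euclidean norms, $\mathcal{Q}_n$ is convex, so it suffices to exhibit, for every $\epsilon>0$, a constant $C$ such that with probability at least $1-\epsilon$ the objective is strictly larger on the sphere $\{\eb^0+n^{-1/2}\bm{u}:\|\bm{u}\|_2=C\}$ than at $\eb^0$. Convexity then forces the minimizer into the ball $\|\eb-\eb^0\|_2\le Cn^{-1/2}$, which is exactly the claimed rate. Concretely I would write $\bm{u}=(\bm{u}_1,\dots,\bm{u}_p)$ with the same group partition as $\eb$, set
\[
D_n(\bm{u})\equiv\mathcal{Q}_n(\eb^0+n^{-1/2}\bm{u})-\mathcal{Q}_n(\eb^0),
\]
and show that $nD_n(\bm{u})$ is, to leading order, a strictly positive definite quadratic form in $\bm{u}$ plus lower-order fluctuations.

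For the loss part, put $t_i\equiv n^{-1/2}\mathbb{X}_i^\top\bm{u}$, so that $Y_i-\mathbb{X}_i^\top\eb=\varepsilon_i-t_i$, and Taylor-expand $\rho_\tau(\varepsilon_i-t_i)-\rho_\tau(\varepsilon_i)=t_ig_\tau(\varepsilon_i)+\tfrac12 t_i^2 h_\tau(\varepsilon_i)+R_{ni}$. Summing and multiplying by $n$, the linear part becomes $\bm{u}^\top\bm{W}_n$ with $\bm{W}_n\equiv n^{-1/2}\sum_{i=1}^n\mathbb{X}_ig_\tau(\varepsilon_i)$; because $\mathbb{E}[g_\tau(\varepsilon_i)]=0$ by (A1) while $\mathbb{X}_i$ is bounded by (A2) and $\eU_n$ is bounded by (A3), each coordinate of $\bm{W}_n$ has bounded variance, giving $\bm{W}_n=O_\PP(1)$ and hence a term of size $O_\PP(\|\bm{u}\|_2)$. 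The quadratic part equals $\tfrac12\bm{u}^\top\big(n^{-1}\sum_i h_\tau(\varepsilon_i)\mathbb{X}_i\mathbb{X}_i^\top\big)\bm{u}$; since $h_\tau$ is bounded and $\mathbb{E}[h_\tau(\varepsilon_i)]=\mu_{h_\tau}>0$, a law of large numbers combined with (\ref{Gg}) yields $n^{-1}\sum_i h_\tau(\varepsilon_i)\mathbb{X}_i\mathbb{X}_i^\top\overset{\PP}{\longrightarrow}\mu_{h_\tau}\eU$, so this term is bounded below by $\tfrac12\mu_{h_\tau}m_0\|\bm{u}\|_2^2(1+o_\PP(1))$, a strictly positive quadratic.

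The penalty difference is $\lambda_n\sum_{j=1}^p\widehat{\omega}_{n;j}\big(\|\eb_j^0+n^{-1/2}\bm{u}_j\|_2-\|\eb_j^0\|_2\big)$. For irrelevant groups $j>p^0$ each increment equals $n^{-1/2}\|\bm{u}_j\|_2\ge0$, so these terms can only raise $D_n$ and may be dropped in a lower bound. For relevant groups $j\le p^0$ the $\sqrt n$-consistency of the preliminary expectile estimator $\widetilde{\eb}_n$ gives $\widehat{\omega}_{n;j}=\|\widetilde{\eb}_{n;j}\|_2^{-\gamma}=O_\PP(1)$, and the norm is $1$-Lipschitz, so after multiplying by $n$ this contribution is at most $n^{1/2}\lambda_n\,O_\PP(\|\bm{u}\|_2)=o_\PP(\|\bm{u}\|_2)$ by condition (\ref{E3.1})(a). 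Collecting the pieces,
\[
nD_n(\bm{u})\ge\bm{u}^\top\bm{W}_n+\tfrac12\mu_{h_\tau}m_0\|\bm{u}\|_2^2(1+o_\PP(1))+\textstyle\sum_i R_{ni}+o_\PP(\|\bm{u}\|_2),
\]
and for $\|\bm{u}\|_2=C$ with $C$ large the quadratic term dominates the $O_\PP(\|\bm{u}\|_2)$ linear contributions, giving the desired sign with high probability.

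The main obstacle is controlling the remainder $\sum_i R_{ni}$, which is delicate because $\rho_\tau$ is only $C^1$: its second derivative $h_\tau$ jumps at $0$, so the quadratic approximation breaks down precisely when $\varepsilon_i$ changes sign over the integration interval. I would exploit that $R_{ni}=0$ unless $|\varepsilon_i|\le|t_i|$, while on that event $|R_{ni}|\le Ct_i^2$. Since $|t_i|\le Cn^{-1/2}\|\bm{u}\|_2$ by (A2) and the error density is continuous near $0$ by (A1), one has $\PP(|\varepsilon_i|\le|t_i|)=O(n^{-1/2}\|\bm{u}\|_2)$, whence
\[
\mathbb{E}\big|\textstyle\sum_i R_{ni}\big|\le C\sum_{i=1}^n t_i^2\,\PP(|\varepsilon_i|\le|t_i|)=O(n^{-1/2}\|\bm{u}\|_2^3)=o(\|\bm{u}\|_2^2).
\]
This confirms that the remainder is asymptotically negligible relative to the quadratic term and closes the argument; the finite fourth moment in (A1) is what guarantees the variance bounds used for $\bm{W}_n$ and the concentration of the Hessian-type average.
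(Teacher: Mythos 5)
Your proof is correct and follows essentially the same route as the paper's: the convexity (Wu--Liu) reduction to a sphere of radius $C n^{-1/2}$ around $\eb^0$, a Taylor expansion of the expectile loss into linear, quadratic and remainder parts, and the identical penalty treatment (dropping the nonnegative irrelevant-group terms and using consistency of the weights plus condition (\ref{E3.1})(a) for the relevant groups). The only divergences are matters of detail, both legitimate: you bound the linear term by a variance/Chebyshev argument where the paper proves a full Lindeberg--Feller CLT (which it needs anyway for Theorem \ref{T1}), and you control the Taylor remainder with a self-contained sign-change argument specific to the expectile loss, where the paper outsources this step to the proof of Theorem 1 of \cite{Liao.2018}.
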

\noindent The convergence rate of $\widehat{\eb}_n $ is of optimal order $n^{-1/2}$ and coincides with that obtained by \cite{Liao.2018} for the SCAD expectile estimator with ungrouped variables, or by \cite{Wang-Leng-08} for the adaptive group LASSO LS estimator, both when the number of model parameters is fixed. 
The following theorem shows the asymptotic normality of  $\widehat{\eb}_n$ corresponding to the relevant groups of variables. In this case, the non-zero parameter estimators have the same asymptotic distribution they would have if the true non-zero parameters were known.  
\begin{theorem} 
\label{T1}
Under assumptions (A1)-(A3) and (\ref{E3.1}), we have: $
n^{1/2}(\widehat{\eb}_n-\eb^0)_{\mathcal{A}} \overset{\mathcal{L}}{\underset{n \rightarrow \infty}{\longrightarrow}} \mathcal{N}\big(\textbf{0}_{r^0},  \sigma^{2}_{g_{\tau}} \mu^{-2}_{h_{\tau}} \eU_{\mathcal{A}}^{-1}\big)$,
with $\eU_{\mathcal{A}}$ the sub-matrix of $\eU$ with indexes in $\{1,...,d_1,d_1+1,...,d_1+d_2,...,\sum_{j=1}^{p^0-1} d_j+1, \cdots ,\sum_{j=1}^{p^0} d_j\}$.
\end{theorem}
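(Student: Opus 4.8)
The plan is to combine the $\sqrt n$-rate from Lemma \ref{L1} with a local quadratic expansion of the penalized criterion, and then invoke a convexity argument to pass from convergence of the objective to convergence of its argmin. First I would reparametrize by setting $\eu \equiv n^{1/2}(\eb - \eb^0) \in \R^r$ and study the rescaled convex process $D_n(\eu) \equiv n\big[\mathcal{Q}_n(\eb^0 + n^{-1/2}\eu) - \mathcal{Q}_n(\eb^0)\big]$, which is convex in $\eu$ since both $\rho_\tau$ and the adaptive $L_{2,1}$-penalty are convex. Writing $\varepsilon_i = Y_i - \mathbb{X}_i^\top\eb^0$, the loss contribution equals $\sum_{i=1}^n[\rho_\tau(\varepsilon_i - n^{-1/2}\mathbb{X}_i^\top\eu) - \rho_\tau(\varepsilon_i)]$, which I would expand through $\psi_\tau(t) = \rho_\tau(\varepsilon_i - t)$ and its derivatives $g_\tau, h_\tau$ defined above.

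Second, I would identify the three limits in this expansion. The linear term $n^{-1/2}\sum_i g_\tau(\varepsilon_i)\mathbb{X}_i^\top\eu$ converges in law to $\mathbf{W}^\top\eu$ by the Lindeberg central limit theorem, where $\mathbf{W} \sim \mathcal{N}(\textbf{0}_r, \sigma^2_{g_\tau}\eU)$; here one uses $\mathbb{E}[g_\tau(\varepsilon_i)]=0$ (guaranteed by (A1)), $\Var[g_\tau(\varepsilon_i)]=\sigma^2_{g_\tau}$, the boundedness (A2) of the design, and (A3). The quadratic term $\tfrac12\eu^\top\big(n^{-1}\sum_i h_\tau(\varepsilon_i)\mathbb{X}_i\mathbb{X}_i^\top\big)\eu$ converges to $\tfrac12\mu_{h_\tau}\eu^\top\eU\eu$ by the law of large numbers, using that $h_\tau(\varepsilon_i)$ is independent of the deterministic $\mathbb{X}_i$ with mean $\mu_{h_\tau}$, together with (\ref{Gg}). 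For the penalty, on a relevant group $j\le p^0$ the weight $\widehat\omega_{n;j}=\|\widetilde{\eb}_{n;j}\|_2^{-\gamma}\to\|\eb^0_j\|_2^{-\gamma}$ is bounded, so its contribution is $O(n^{1/2}\lambda_n)\to0$ by (\ref{E3.1})(a); on an irrelevant group $j>p^0$ one has $\widehat\omega_{n;j}=O_\PP(n^{\gamma/2})$ (since $\widetilde{\eb}_{n;j}=O_\PP(n^{-1/2})$), so the contribution $n^{(\gamma+1)/2}\lambda_n\|\eu_j\|_2$ diverges whenever $\eu_j\ne\textbf{0}$ by (\ref{E3.1})(b).

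Consequently $D_n(\eu)$ converges in distribution to $D(\eu) = \mathbf{W}_{\mathcal{A}}^\top\eu_{\mathcal{A}} + \tfrac12\mu_{h_\tau}\eu_{\mathcal{A}}^\top\eU_{\mathcal{A}}\eu_{\mathcal{A}}$ when $\eu_j=\textbf{0}$ for all $j>p^0$, and to $+\infty$ otherwise. This limit forces the irrelevant coordinates to vanish and is strictly convex on the active coordinates, with unique minimizer $\widehat\eu_{\mathcal{A}} = -\mu_{h_\tau}^{-1}\eU_{\mathcal{A}}^{-1}\mathbf{W}_{\mathcal{A}}$, whose law is exactly $\mathcal{N}(\textbf{0}_{r^0}, \sigma^2_{g_\tau}\mu^{-2}_{h_\tau}\eU_{\mathcal{A}}^{-1})$. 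Since $D_n$ is convex and $D$ has a unique argmin, the convexity lemma for argmins of convex processes (of Hjort--Pollard / Geyer type) lets me conclude that $n^{1/2}(\widehat\eb_n-\eb^0)_{\mathcal{A}}=\widehat\eu_{n,\mathcal{A}}$ converges in law to $\widehat\eu_{\mathcal{A}}$, which is the claimed normal limit.

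The main obstacle I anticipate is the remainder control in the second-order expansion of the loss: because $h_\tau$ is discontinuous at the origin (jumping between $2(1-\tau)$ and $2\tau$), the map $u\mapsto\rho_\tau(\varepsilon-u)$ is not twice differentiable pathwise and a naive Taylor bound fails. The resolution is that integrating against the error density, which (A1) assumes continuous near $0$, smooths the jump: the mean function $t\mapsto\mathbb{E}[\rho_\tau(\varepsilon-t)]$ is $C^2$ near $0$, so the expected remainder is $o(1)$ uniformly on compacts, while the centered fluctuation is $o_\PP(1)$ by a variance computation exploiting (A2) and $\mathbb{E}(\varepsilon^4)<\infty$. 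This differentiability-in-quadratic-mean step is where the bulk of the technical work lies; everything else reduces to the central limit theorem, the law of large numbers, and the convexity argmin lemma.
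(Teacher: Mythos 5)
Your proposal is correct and follows essentially the same route as the paper's proof: the reparametrization $\eu = n^{1/2}(\eb-\eb^0)$, the quadratic expansion of the loss with the Lindeberg CLT for the linear term and the LLN for the Hessian term, the penalty dichotomy (vanishing on $\mathcal{A}$ by (\ref{E3.1})(a), diverging on $\mathcal{A}^c$ by (\ref{E3.1})(b)), and the epi-convergence/convexity argmin lemma of Geyer and Knight--Fu to transfer the limit to the minimizer. The only difference is that you spell out the remainder control for the non-smooth second derivative (differentiability in quadratic mean), which the paper instead delegates to the proof of Theorem 1 of \cite{Liao.2018}; this is a welcome clarification, not a change of method.
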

\noindent The next theorem establishes the sparsity property of the estimator $\widehat{\eb}_n$. This result show that the estimators of the groups of non-zero parameters are indeed non-zero with a probability converging to 1 when $n$ converges to infinity. 
\begin{theorem} 
\label{T2}
Under assumptions of Theorem \ref{T1}, we have: $
\underset{n \rightarrow \infty}{\lim} \mathbb{P}[\widehat{\mathcal{A}}_n=\mathcal{A}]=1$.
\end{theorem}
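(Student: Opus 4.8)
The plan is to prove the two set inclusions $\mathcal{A}\subseteq\widehat{\mathcal{A}}_n$ and $\widehat{\mathcal{A}}_n\subseteq\mathcal{A}$, each with probability tending to $1$. Since $p$ is fixed, a union bound over the finitely many groups then yields $\mathbb{P}[\widehat{\mathcal{A}}_n=\mathcal{A}]\to 1$.

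For the inclusion $\mathcal{A}\subseteq\widehat{\mathcal{A}}_n$ (no relevant group is dropped) I would invoke the $n^{-1/2}$-consistency of Lemma \ref{L1}: for every $j\in\mathcal{A}=\{1,\dots,p^0\}$ one has $\|\eb_j^0\|_2>0$ fixed, while $\widehat{\eb}_{n;j}-\eb_j^0=O_\PP(n^{-1/2})\to 0$ in probability, so $\|\widehat{\eb}_{n;j}\|_2\to\|\eb_j^0\|_2>0$ and hence $\mathbb{P}[j\in\widehat{\mathcal{A}}_n]\to 1$. This is in fact already contained in the asymptotic normality of Theorem \ref{T1}, so this direction is routine.

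The substance is the reverse inclusion $\widehat{\mathcal{A}}_n\subseteq\mathcal{A}$, i.e. that every irrelevant group $j\in\mathcal{A}^c=\{p^0+1,\dots,p\}$ is estimated as exactly zero with probability tending to $1$. I would argue by contradiction using the Karush--Kuhn--Tucker (subgradient) optimality conditions for the penalized criterion $\mathcal{Q}_n$. Because $\rho_\tau$ is differentiable, if $\widehat{\eb}_{n;j}\neq\mathbf{0}_{d_j}$ for some $j\in\mathcal{A}^c$, the block-$j$ stationarity condition forces
$$ n^{-1}\sum_{i=1}^n \textbf{X}_{ij}\,\rho_\tau'\big(Y_i-\mathbb{X}_i^\top\widehat{\eb}_n\big)=\lambda_n\,\widehat{\omega}_{n;j}\,\frac{\widehat{\eb}_{n;j}}{\|\widehat{\eb}_{n;j}\|_2}. $$
Taking Euclidean norms, the right-hand side equals $\lambda_n\widehat{\omega}_{n;j}$, and I would then show that the two sides are of incompatible orders. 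For the left-hand side, writing $Y_i-\mathbb{X}_i^\top\widehat{\eb}_n=\varepsilon_i-\mathbb{X}_i^\top(\widehat{\eb}_n-\eb^0)$ and Taylor-expanding $\rho_\tau'$ around $\varepsilon_i$ (using $\rho_\tau'(\varepsilon_i)=-g_\tau(\varepsilon_i)$, $\eE[g_\tau(\varepsilon_i)]=0$ by (A1), the bound (A2) on the design, and $\widehat{\eb}_n-\eb^0=O_\PP(n^{-1/2})$ from Lemma \ref{L1}), the leading term $n^{-1}\sum_i\textbf{X}_{ij}g_\tau(\varepsilon_i)$ is $O_\PP(n^{-1/2})$ by the central limit theorem, and the remainder, controlled through $h_\tau$ and (A3), is likewise $O_\PP(n^{-1/2})$; this reuses precisely the expansion established in the proof of Theorem \ref{T1}. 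For the right-hand side, $\widetilde{\eb}_{n;j}$ is $n^{-1/2}$-consistent for $\eb_j^0=\mathbf{0}_{d_j}$, so $\|\widetilde{\eb}_{n;j}\|_2=O_\PP(n^{-1/2})$ and the adaptive weight $\widehat{\omega}_{n;j}=\|\widetilde{\eb}_{n;j}\|_2^{-\gamma}$ satisfies $\lambda_n\widehat{\omega}_{n;j}\geq C\,\lambda_n n^{\gamma/2}$ with probability tending to $1$. Condition (\ref{E3.1})(b), $n^{(\gamma+1)/2}\lambda_n\to\infty$, is equivalent to $n^{1/2}\big(\lambda_n n^{\gamma/2}\big)\to\infty$, whence $\lambda_n\widehat{\omega}_{n;j}\gg n^{-1/2}$.

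The contradiction is thus between a left-hand side of order $O_\PP(n^{-1/2})$ and a right-hand side of strictly larger order than $n^{-1/2}$, forcing $\mathbb{P}[\widehat{\eb}_{n;j}\neq\mathbf{0}_{d_j}]\to 0$ for each fixed $j\in\mathcal{A}^c$. I expect the main obstacle to be the rigorous control of the left-hand side: one must show that the stationarity gradient evaluated at the \emph{random} estimator $\widehat{\eb}_n$ stays $O_\PP(n^{-1/2})$, which requires the Taylor expansion with a uniform remainder bound and rests on (A1)--(A3) together with Lemma \ref{L1} -- the same analytic core as in Theorem \ref{T1}. Finally, since $\mathcal{A}^c$ is finite (as $p$ is fixed), a union bound gives $\mathbb{P}[\widehat{\mathcal{A}}_n\subseteq\mathcal{A}]\to 1$, and combining this with the first inclusion completes the proof.
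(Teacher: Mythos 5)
Your proof is correct, but for the substantive inclusion $\widehat{\mathcal{A}}_n\subseteq\mathcal{A}$ it takes a genuinely different route from the paper. The paper does not use KKT conditions for this theorem: it compares objective values, showing that for any $\eb$ with $\|\eb-\ebo\|_2=O(n^{-1/2})$ and $\|\eb_{\mathcal{A}^c}\|_2\neq 0$ one has $n\big(\mathcal{Q}_n((\eb_{\mathcal{A}},\textbf{0}_{r-r^0}))-\mathcal{Q}_n((\eb_{\mathcal{A}},\eb_{\mathcal{A}^c}))\big)\overset{\PP}{\longrightarrow}-\infty$, because the two centered loss sums are $O_\PP(1)$ (relations (\ref{la1}) and (\ref{la2})) while the penalty gap $n\lambda_n\sum_{j=p^0+1}^{p}\widehat{\omega}_{n;j}\|\eb_j\|_2$ diverges under (\ref{E3.1})(b); minimality of $\widehat{\eb}_n$ then rules out nonzero $\mathcal{A}^c$-blocks. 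You instead derive a contradiction of orders in the block-$j$ stationarity equation, and your argument goes through: the worry you flag about a uniform remainder at the random minimizer is settled immediately by the Lipschitz continuity of $\rho_\tau'$, which gives $\big\|n^{-1}\sum_{i=1}^n\textbf{X}_{ij}\big(\rho_\tau'(Y_i-\mathbb{X}_i^\top\widehat{\eb}_n)+g_\tau(\varepsilon_i)\big)\big\|_2\leq C\|\widehat{\eb}_n-\ebo\|_2=O_\PP(n^{-1/2})$ under (A2), (A3) and Lemma \ref{L1}, while the CLT term $n^{-1}\sum_{i=1}^n\textbf{X}_{ij}g_\tau(\varepsilon_i)$ is $O_\PP(n^{-1/2})$ exactly as in relation (\ref{E8.2}); against this, $\lambda_n\widehat{\omega}_{n;j}\gg n^{-1/2}$ by (\ref{E3.1})(b) and the $n^{-1/2}$-rate of $\widetilde{\eb}_{n;j}$ on $\mathcal{A}^c$. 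Both proofs consume the same ingredients (Lemma \ref{L1}, the rate of $\widetilde{\eb}_{n;j}$, condition (\ref{E3.1})(b)); what yours buys is a clean, group-by-group mechanism exploiting the differentiability of the expectile loss — indeed it is the very device the paper itself uses later via relation (\ref{KKT1}) in Theorem \ref{Theorem3.2_CSDA_L2} — whereas the paper's comparison argument never evaluates a gradient at $\widehat{\eb}_n$, only objective values, so its template carries over verbatim to non-differentiable losses (quantile) and to the growing-$p$ setting of Theorem \ref{T4}. Your handling of the easy inclusion $\mathcal{A}\subseteq\widehat{\mathcal{A}}_n$ via consistency alone is also fine; the paper invokes the asymptotic normality of Theorem \ref{T1} there, but consistency suffices.
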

\noindent An estimator which satisfies the sparsity property and is  asymptotically normal for the non-zero coefficient vector,  enjoys the oracle properties. 
\begin{remark}
	Considering the $\widehat{\omega}_{n;j}$ related to the $j$th group, the expectile estimator can be replaced by any estimator of $\eb_j$ converging consistently to $\eb^0_j$ with a rate  of $n^{-1/2}$.
\end{remark}


\subsection{Case when $p$ is depending on $n$}
\label{subsect_exp_pvar}
In this section, we study the asymptotic properties of the adaptive group LASSO expectile estimator $\widehat{\eb}_n$ defined by (\ref{bebe}) when $p$ depends on $n$: $p=p_n$ and $p_n \underset{n \rightarrow +\infty}{\longrightarrow} +\infty$. For readability, we keep the notation $p$ instead of $p_n$, even if $p$ depends on $n$. Obviously, $p^0$, $r$, $r^0$, ${\cal A}$ can also depend on $n$, but for the same readability reasons, the index $n$ does not appear. 
\newline
Since $p$ depends on $n$,  new assumptions are considered:
\begin{description}
\item \textbf{(A4)} $p=O(n^c)$ with $c \in [0,1/2)$.
\item \textbf{(A5)} For $h_0\equiv \underset{1 \leq j \leq p^0}{\min} \| \eb_j^0 \|_2$, there exist a constant $ K >0$ so that $K \leq n^{-\alpha} h_0$ and $\alpha > (c-1)/2$.
\end{description}
Assumptions (A4) is often used when $p$ is depending on $n$ (see \cite{Ciuperca.2021} and \cite{Ciuperca.2016}). Since $\| \ex \|_2 \leq r^{1/2} \| \ex \|_\infty$, then assumptions (A2) and (A4) imply 
\begin{eqnarray}
( {p}/{n})^{ {1}/{2}} \underset{1 \leq i \leq n }{\max} \| \mathbb{X}_i \|_2 \underset{n \rightarrow +\infty}{\longrightarrow} 0 ,
\label{tl}
\end{eqnarray}
 supposition considered by \cite{Ciuperca.2016} for the grouped   quantile case,  by \cite{Ciuperca.2021}, \cite{Hu.Chen.2021} for ungrouped expectile case and ungrouped $L_q$-norm case, respectively.
Assumption (A4) enable us to control the convergence rate of $p$, and ensure the convergence towards 0 of the sequence $p/n$. Assumption (A5) is classical for a model with the number of variable groups depends on the sample size (see \cite{Ciuperca.2016}, \cite{Zhang-Xiang-16}). 
This assumption implies that coefficients can be non-zero for a fixed $n$ but they converge to 0 when $n$ converges to infinity. 
By assumption (A3) we deduce that $r= \textrm{rank} (n^{-1}\sum^n_{i=1} \mathbb{X}_i \mathbb{X}_i ^\top)$. Moreover, since $\textrm{rank} (n^{-1}\sum^n_{i=1} \mathbb{X}_i \mathbb{X}_i ^\top) \leq n$, then $r \leq n$. Thus we will consider $p=O(n^c)$, with $c \in [0,1]$  such that $r \leq n$. 
In the study of the estimator $\widehat{\eb}_n$, two cases will be considered:   $c \in [0,1/2)$ and   $c \in [1/2,1]$. Note that the first case covers also the possibility that $p$ does not depend on $n$.  To shorten the paper, only case $c \in [0,1/2)$ will be presented for the expectile loss function. The   case $c \in [1/2,1]$ will be considered in Section \ref{Sect_generalisation} which will have the expectile function as a special case. 
\newline
 The tuning parameter $(\lambda_n)_{n \in \N}$, with $\lambda_n \underset{n \rightarrow +\infty}{\longrightarrow} 0$,  satisfies instead of conditions (\ref{E3.1})(a) and (\ref{E3.1})(b) the following two assumptions:
\begin{equation}
\label{E4.1}
    \lambda_n   n^{  1/2-\alpha \gamma} \underset{n \rightarrow +\infty}{\longrightarrow} 0,
\end{equation}
\begin{equation}
\label{E4.2}
    \lambda_n n^{ (1-c)(1+\gamma)/2} \underset{n \rightarrow +\infty}{\longrightarrow} +\infty.
\end{equation}
Obviously, for $c=0$ and $\alpha=0$ relations (\ref{E4.1}) and (\ref{E4.2})  become the two conditions of (\ref{E3.1}). Condition  (\ref{E4.2}) is  considered also by  \cite{Zhang-Xiang-16} for LS loss function ($\tau=1/2$) with adaptive group LASSO penalty, in the case of i.i.d. model errors of zero mean and finite variance. Condition (\ref{E4.1}) is weaker than $\lambda_n   n^{  (1+c)/2-\alpha \gamma} \underset{n \rightarrow +\infty}{\longrightarrow} 0$ the corresponding one of \cite{Zhang-Xiang-16}.   
 \\
The following theorem deals with the  convergence rate of $\widehat{\eb}_n$. This rate will be the same than for the expectile estimator $\widetilde{\eb}_n$ in the case $p$ depending on $n$ (see  \cite{Ciuperca.2021}). The same convergence rate was obtained for adaptive group LASSO estimators when the loss functions are: likelihood in  \cite{Wang-Tian-19}, LS in  \cite{Zhang-Xiang-16}, quantile in  \cite{Ciuperca.2016}. Obviously if $c = 0$ we get the results of the previous subsection.
\begin{theorem} 
\label{T3}
Under assumptions (A1)-(A5) and condition (\ref{E4.1}) for $(\lambda_n)_{n \in \N}$, we have: $\| \widehat{\eb}_n - \eb^0 \|_2 = O_\PP( (p/n )^{1/2})$.
\end{theorem}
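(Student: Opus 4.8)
The plan is to combine a reparametrisation with the convexity of $\mathcal{Q}_n$. Write $\delta_n\equiv(p/n)^{1/2}$ and set $\eb=\eb^0+\delta_n\eu$ with $\eu\in\R^r$. Since $\rho_\tau$ is convex and the penalty $\sum_{j=1}^p\widehat{\omega}_{n;j}\|\eb_j\|_2$ is convex, the map $D_n(\eu)\equiv\mathcal{Q}_n(\eb^0+\delta_n\eu)-\mathcal{Q}_n(\eb^0)$ is convex with $D_n(\textbf{0}_r)=0$; its minimiser is $\widehat{\eu}_n=(\widehat{\eb}_n-\eb^0)/\delta_n$. By the standard convexity lemma, if $D_n>0$ on the sphere $\{\|\eu\|_2=L\}$ then $\widehat{\eu}_n$ lies in the open ball $\{\|\eu\|_2<L\}$. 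Hence it suffices to show that for every $\epsilon>0$ there is a constant $L>0$ with
$$\PP\Big(\inf_{\|\eu\|_2=L}\big[\mathcal{Q}_n(\eb^0+\delta_n\eu)-\mathcal{Q}_n(\eb^0)\big]>0\Big)\ge 1-\epsilon$$
for $n$ large, which gives $\|\widehat{\eb}_n-\eb^0\|_2\le L\,\delta_n$ with probability at least $1-\epsilon$, i.e. the claimed rate.

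I would then split $D_n$ into the loss part and the penalty part. For the loss part the decisive observation is that $\rho_\tau$ is \emph{strongly} convex: since $\rho_\tau''(u)=2(\tau\e1_{u\ge0}+(1-\tau)\e1_{u<0})\ge 2\tau_\ast$ with $\tau_\ast\equiv\min(\tau,1-\tau)>0$ wherever defined, and $\rho_\tau$ is $C^1$, one has the exact lower bound $\rho_\tau(\varepsilon_i-t)-\rho_\tau(\varepsilon_i)\ge g_\tau(\varepsilon_i)\,t+\tau_\ast t^2$ for all $t\in\R$ (here $g_\tau=-\rho_\tau'$). Applying this with $t=\delta_n\mathbb{X}_i^\top\eu$, using $Y_i-\mathbb{X}_i^\top\eb^0=\varepsilon_i$, and averaging gives
$$n^{-1}\sum_{i=1}^n\big[\rho_\tau(Y_i-\mathbb{X}_i^\top(\eb^0+\delta_n\eu))-\rho_\tau(\varepsilon_i)\big]\ge \delta_n\,\mathbf{W}_n^\top\eu+\tau_\ast\,\delta_n^2\,\eu^\top\eU_n\eu,$$
where $\mathbf{W}_n\equiv n^{-1}\sum_{i=1}^n g_\tau(\varepsilon_i)\mathbb{X}_i$. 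By (A1) the $g_\tau(\varepsilon_i)$ are i.i.d., centred and of variance $\sigma^2_{g_\tau}$, so $\eE\|\mathbf{W}_n\|_2^2=\sigma^2_{g_\tau}n^{-1}\mathrm{tr}(\eU_n)\le\sigma^2_{g_\tau}M_0\,r/n$ by (A3); as the rate is stated in $p$ (equivalently $r$, the group sizes being bounded) this is $O(p/n)=O(\delta_n^2)$, whence $\|\mathbf{W}_n\|_2=O_\PP(\delta_n)$ and $|\delta_n\mathbf{W}_n^\top\eu|\le\delta_n\|\mathbf{W}_n\|_2 L=O_\PP(\delta_n^2)L$ uniformly on the sphere. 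The quadratic term is bounded below by $\tau_\ast\delta_n^2\,\delta_{\min}(\eU_n)L^2\ge\tau_\ast m_0\delta_n^2 L^2$ using (A3). Using strong convexity here is what lets me avoid a Taylor remainder altogether; otherwise the contribution of the sign-change region $\{|\varepsilon_i|\le|t_i|\}$ would have to be controlled uniformly over the sphere through~(\ref{tl}), which is the genuinely delicate estimate.

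For the penalty part I would treat relevant and irrelevant groups separately. For $j\in\mathcal{A}^c$ one has $\eb_j^0=\textbf{0}_{d_j}$, so $\|\eb_j^0+\delta_n\eu_j\|_2-\|\eb_j^0\|_2=\delta_n\|\eu_j\|_2\ge0$; these terms only increase $D_n$ and may be dropped. For $j\in\mathcal{A}$ the triangle inequality gives $\big|\,\|\eb_j^0+\delta_n\eu_j\|_2-\|\eb_j^0\|_2\,\big|\le\delta_n\|\eu_j\|_2$, so the relevant penalty is at most $\lambda_n\big(\max_{j\le p^0}\widehat{\omega}_{n;j}\big)\delta_n(p^0)^{1/2}L$. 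Here the weights must be controlled: the unpenalised estimator satisfies $\|\widetilde{\eb}_n-\eb^0\|_2=O_\PP(\delta_n)$ (\cite{Ciuperca.2021}), and since $\alpha>(c-1)/2$ forces $h_0\ge Kn^\alpha\gg\delta_n$, assumption (A5) yields $\min_{j\le p^0}\|\widetilde{\eb}_{n;j}\|_2\ge h_0(1-o_\PP(1))$ and hence $\max_{j\le p^0}\widehat{\omega}_{n;j}=O_\PP(n^{-\alpha\gamma})$. Using $(p^0)^{1/2}\le p^{1/2}$ and $\delta_n^{-1}p^{1/2}=n^{1/2}$, the relevant penalty is $O_\PP(\lambda_n n^{-\alpha\gamma}\delta_n p^{1/2})L=O_\PP(\lambda_n n^{1/2-\alpha\gamma})\,\delta_n^2 L=o_\PP(\delta_n^2)L$ precisely by condition~(\ref{E4.1}).

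Collecting the three pieces, uniformly on $\{\|\eu\|_2=L\}$,
$$\mathcal{Q}_n(\eb^0+\delta_n\eu)-\mathcal{Q}_n(\eb^0)\ge\delta_n^2\big[\tau_\ast m_0 L^2-O_\PP(1)\,L-o_\PP(1)\big],$$
where the $O_\PP(1)$ and $o_\PP(1)$ bounds do not depend on $\eu$. Choosing $L$ large makes the bracket positive with probability at least $1-\epsilon$, which yields the displayed probability bound and hence $\|\widehat{\eb}_n-\eb^0\|_2=O_\PP((p/n)^{1/2})$. With the strong-convexity route the loss term presents no serious analytic obstacle; the step I expect to require the most care is the uniform, sphere-wide bookkeeping that ties the weight order $n^{-\alpha\gamma}$, the factor $p^{1/2}$ and the scale $\delta_n$ into condition~(\ref{E4.1}), together with feeding the initial-estimator rate through (A5) to bound the weights on the relevant groups.
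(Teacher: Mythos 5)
Your proof is correct and, in outline, follows the same route as the paper's: both reduce the claim, via convexity of $\mathcal{Q}_n$, to showing that $\mathcal{Q}_n(\ebo+B(p/n)^{1/2}\eu)-\mathcal{Q}_n(\ebo)>0$ uniformly over a sphere, both discard the nonnegative penalty contributions of the groups in $\mathcal{A}^c$, and both control the penalty over $\mathcal{A}$ identically (triangle inequality, Cauchy--Schwarz, the weight bound $\max_{j\in\mathcal{A}}\widehat{\omega}_{n;j}=O_\PP(n^{-\alpha\gamma})$ obtained from the rate of $\widetilde{\eb}_n$ in \cite{Ciuperca.2021} combined with (A5) --- the paper's relation (\ref{E8.9}) --- and then condition (\ref{E4.1}), which is the paper's relation (\ref{tt})). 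The genuine difference is the loss term. The paper imports Lemma \ref{L2} (Theorem 2.1(i) of \cite{Ciuperca.2021}) as a black box, whose internal proof is a second-order expansion with a remainder controlled uniformly through (\ref{tl}); you instead exploit the fact that for $q=2$ the expectile loss is globally strongly convex, which gives the exact pointwise minorization $\rho_\tau(\varepsilon-t)-\rho_\tau(\varepsilon)\ge g_\tau(\varepsilon)t+\tau_\ast t^2$, $\tau_\ast=\min(\tau,1-\tau)$, so that the loss part requires only the variance computation for $\mathbf{W}_n=n^{-1}\sum_{i=1}^n g_\tau(\varepsilon_i)\mathbb{X}_i$. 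This buys two things: the argument is self-contained (no expansion, no remainder bookkeeping), and it tightens a step that the paper words loosely --- Lemma \ref{L2} is quoted as ``$0<\mathcal{G}_n(\cdot)-\mathcal{G}_n(\ebo)=O_\PP(B^2p)$'', i.e.\ positivity plus an \emph{upper} bound, which by itself cannot dominate a penalty of size $-o_\PP(Bp/n)$; your term $\tau_\ast m_0 L^2\delta_n^2$ supplies precisely the lower bound of order $B^2p/n$ that the domination argument needs. What the paper's route buys in exchange is generality: the expansion-based machinery carries over to the asymmetric $L_q$ loss of Section \ref{Sect_generalisation}, where $\rho_\tau(\cdot;q)$ is not strongly convex for $q\neq2$, whereas your device is tied to the expectile case --- which is all Theorem \ref{T3} requires. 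Two dependencies you share with the paper rather than remove: the identification of the orders of $r$ and $p$ (bounded group sizes), which the paper itself uses implicitly in deducing (\ref{tl}) from (A2) and (A4); and the reliance on \cite{Ciuperca.2021} for the rate of $\widetilde{\eb}_n$, which is unavoidable since the adaptive weights are built from that estimator.
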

\noindent The following theorem shows the first oracle property of  $\widehat{\eb}_n$: the sparsity.
\begin{theorem} 
\label{T4}
Under assumptions (A1)-(A5), conditions (\ref{E4.1}) and (\ref{E4.2}) for $(\lambda_n)_{n \in \N}$, we have: $\underset{n \rightarrow \infty}\lim \mathbb{P}[\widehat{\mathcal{A}}_n=\mathcal{A}]=1$.
\end{theorem}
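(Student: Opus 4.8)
The plan is to prove the two inclusions $\mathcal{A}\subseteq\widehat{\mathcal{A}}_n$ and $\widehat{\mathcal{A}}_n\subseteq\mathcal{A}$ separately, each holding with probability tending to $1$, since their conjunction is exactly the event $\{\widehat{\mathcal{A}}_n=\mathcal{A}\}$. The first inclusion (no relevant group is missed) I would read off directly from the rate of Theorem~\ref{T3}. Indeed, if some $j\leq p^0$ had $\widehat{\eb}_{n;j}=\mathbf{0}$, then $\|\widehat{\eb}_n-\eb^0\|_2\geq\|\widehat{\eb}_{n;j}-\eb_j^0\|_2=\|\eb_j^0\|_2\geq h_0$. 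By Assumption (A5) we have $h_0\geq K n^{\alpha}$ with $\alpha>(c-1)/2$, while Theorem~\ref{T3} together with (A4) gives $\|\widehat{\eb}_n-\eb^0\|_2=O_\PP((p/n)^{1/2})=O_\PP(n^{(c-1)/2})$. Since $n^{\alpha}\gg n^{(c-1)/2}$, the inclusion $\{\exists\, j\leq p^0:\widehat{\eb}_{n;j}=\mathbf{0}\}\subseteq\{\|\widehat{\eb}_n-\eb^0\|_2\geq h_0\}$ has probability tending to $0$, which settles this half.

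The harder inclusion is $\widehat{\mathcal{A}}_n\subseteq\mathcal{A}$ (no irrelevant group is selected), which I would obtain from the Karush--Kuhn--Tucker conditions of the convex problem (\ref{bebe}). Since $\rho_\tau$ is differentiable, the subgradient optimality condition at $\widehat{\eb}_n$ gives, for each group $j$ with $\widehat{\eb}_{n;j}\neq\mathbf{0}$,
\[
n^{-1}\sum_{i=1}^n \mathbf{X}_{ij}\,\rho_\tau'\big(Y_i-\mathbb{X}_i^\top\widehat{\eb}_n\big)=\lambda_n\widehat{\omega}_{n;j}\,\frac{\widehat{\eb}_{n;j}}{\|\widehat{\eb}_{n;j}\|_2},
\]
so that taking Euclidean norms forces $\big\|n^{-1}\sum_i \mathbf{X}_{ij}\rho_\tau'(Y_i-\mathbb{X}_i^\top\widehat{\eb}_n)\big\|_2=\lambda_n\widehat{\omega}_{n;j}$. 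The strategy is to show that, uniformly over the irrelevant groups $j>p^0$, the left-hand score is strictly smaller than the right-hand penalty with probability tending to $1$, which rules out $\widehat{\eb}_{n;j}\neq\mathbf{0}$. For the score I would expand $\rho_\tau'(Y_i-\mathbb{X}_i^\top\widehat{\eb}_n)=\rho_\tau'(\varepsilon_i-\mathbb{X}_i^\top(\widehat{\eb}_n-\eb^0))$ around $\eb^0$, using $\rho_\tau'(\varepsilon_i)=-g_\tau(\varepsilon_i)$ and $\mathbb{E}[g_\tau(\varepsilon_i)]=0$: the leading stochastic term $n^{-1}\sum_i\mathbf{X}_{ij}g_\tau(\varepsilon_i)$ is controlled by Chebyshev and a union bound (finite variance of $g_\tau$ being supplied by (A1)), while the term linear in $\widehat{\eb}_n-\eb^0$ is bounded through (A2), (A3) and Theorem~\ref{T3}; this yields $\max_{j>p^0}\big\|n^{-1}\sum_i\mathbf{X}_{ij}\rho_\tau'(Y_i-\mathbb{X}_i^\top\widehat{\eb}_n)\big\|_2=O_\PP((p/n)^{1/2})$. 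For the penalty, the weight of an irrelevant group diverges: since $\eb_j^0=\mathbf{0}$, one has $\max_{j>p^0}\|\widetilde{\eb}_{n;j}\|_2\leq\|\widetilde{\eb}_n-\eb^0\|_2=O_\PP((p/n)^{1/2})$, whence $\min_{j>p^0}\widehat{\omega}_{n;j}=\min_{j>p^0}\|\widetilde{\eb}_{n;j}\|_2^{-\gamma}\geq C(n/p)^{\gamma/2}$ with probability tending to $1$, so $\lambda_n\widehat{\omega}_{n;j}\geq C\lambda_n(n/p)^{\gamma/2}$.

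Combining the two bounds, the irrelevant-group KKT equality would require $O_\PP((p/n)^{1/2})\geq C\lambda_n(n/p)^{\gamma/2}$, that is $\lambda_n(n/p)^{(1+\gamma)/2}=O_\PP(1)$. But by (A4) we have $(n/p)^{(1+\gamma)/2}\geq C\,n^{(1-c)(1+\gamma)/2}$, and condition (\ref{E4.2}) asserts precisely that $\lambda_n n^{(1-c)(1+\gamma)/2}\to\infty$, a contradiction. Hence $\PP[\exists\, j>p^0:\widehat{\eb}_{n;j}\neq\mathbf{0}]\to 0$, giving $\widehat{\mathcal{A}}_n\subseteq\mathcal{A}$ with probability tending to $1$, and the two inclusions together yield the claim. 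I expect the main obstacle to lie in the uniform control of the score vectors over the diverging family of irrelevant groups: because $p=O(n^c)$ and the total dimension $r$ grow with $n$, the expansion of $\rho_\tau'$ around $\eb^0$ must be handled as a remainder/empirical-process estimate uniform in $j$, and the union bound on $n^{-1}\sum_i\mathbf{X}_{ij}g_\tau(\varepsilon_i)$ is where the fourth-moment hypothesis $\mathbb{E}(\varepsilon_i^4)<\infty$ in (A1) and the design boundedness (A2) are genuinely needed.
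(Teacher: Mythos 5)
Your proof is correct, but for the difficult inclusion $\widehat{\mathcal{A}}_n\subseteq\mathcal{A}$ you take a genuinely different route from the paper. The paper argues by comparison of objective values: for $\eb$ in the set $\mathcal{W}_n$ of parameters within the $(p/n)^{1/2}$-ball of $\eb^0$ having $\|\eb_{\mathcal{A}^c}\|_2>0$, it compares $\mathcal{Q}_n(\eb)$ with $\mathcal{Q}_n(\eb^{(1)})$, where $\eb^{(1)}$ zeroes out the irrelevant blocks; it shows that the loss difference is only $O_\PP(p/n)$ (relation (\ref{E8.11})) while the extra penalty $\sum_{j>p^0}\lambda_n\widehat{\omega}_{n;j}\|\eb_j\|_2$ contributes $O_\PP\big(\lambda_n(p/n)^{(1-\gamma)/2}\big)$ per irrelevant group (relation (\ref{peni})), and invokes (\ref{E4.2}) to conclude that the penalty dominates, so the minimizer cannot lie in $\mathcal{W}_n$. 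You instead use the KKT stationarity condition on the selected groups and play the uniform score bound $O_\PP((p/n)^{1/2})$ against the diverging weight lower bound $\lambda_n(n/p)^{\gamma/2}$; the contradiction with (\ref{E4.2}) is the same arithmetic, $\lambda_n(n/p)^{(1+\gamma)/2}\to\infty$, appearing in a different guise. Interestingly, your technique is exactly the one the authors themselves deploy in the regime $c\in[1/2,1]$ (proof of Theorem \ref{Theorem3.2_CSDA_L2}(i), via relation (\ref{KKT1})), so it is fully compatible with the paper's toolkit. As for what each approach buys: the paper's comparison argument needs no uniformity over the irrelevant groups --- only orders of magnitude of two scalar quantities --- whereas your KKT argument requires a Chebyshev union bound over the $p-p^0$ group scores (hence, implicitly, bounded group sizes so that $r\asymp p$, an identification the paper also makes throughout) and the Lipschitz continuity of $\rho_\tau'$ to control the linearization term; in exchange it localizes precisely where (\ref{E4.2}) enters and gives a sharper, more quantitative exclusion of irrelevant groups. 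Your first inclusion ($\mathcal{A}\subseteq\widehat{\mathcal{A}}_n$ via Theorem \ref{T3} and (A5)) coincides with the paper's argument. One small correction to your closing remark: the union bound on $n^{-1}\sum_i\mathbf{X}_{ij}g_\tau(\varepsilon_i)$ only needs $\Var[g_\tau(\varepsilon)]<\infty$, i.e.\ a second moment of $\varepsilon$; the fourth-moment hypothesis in (A1) is genuinely used elsewhere (the Lindeberg--Feller CLT in Theorems \ref{T1} and \ref{T5}), not at this step.
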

\noindent The last theorem shows the second oracle property of $\widehat{\eb}_n$: the asymptotic normality. 
\begin{theorem} 
\label{T5}
Under the same assumptions as in Theorem \ref{T4}, for all vector $ \textbf{u} \in \mathbb{R}^{r^0}$  so that $ \| \textbf{u} \|_2 =1$, considering  $\eU_{n,\mathcal{A}}\equiv n^{-1} \sum_{i=1}^n \mathbb{X}_{i,\mathcal{A}} \mathbb{X}_{i,\mathcal{A}}^\top$, we have: $
 n^{1/2}(\textbf{u}^\top \eU_{n,\mathcal{A}}^{-1} \textbf{u})^{-1/2} \textbf{u}^\top (\widehat{\eb}_n-\eb^0)_{\mathcal{A}} \overset{\mathcal{L}}{\underset{n \rightarrow \infty}{\longrightarrow}} \mathcal{N}\big(0,  \sigma^2_{g_{\tau}} \mu^{-2}_{h_{\tau}}  \big)$.
\end{theorem}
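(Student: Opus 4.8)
The plan is to derive a Bahadur-type linear representation for the active sub-vector $(\widehat{\eb}_n-\eb^0)_{\mathcal{A}}$ and then apply a Lindeberg--Feller central limit theorem for triangular arrays, using the sparsity result to reduce everything to the relevant groups. First I would work on the event $\{\widehat{\mathcal{A}}_n=\mathcal{A}\}$, which by Theorem \ref{T4} has probability tending to $1$. On this event the estimated null groups coincide with the true ones, so $\mathbb{X}_i^\top\widehat{\eb}_n=\mathbb{X}_{i,\mathcal{A}}^\top(\widehat{\eb}_n)_{\mathcal{A}}$ and it suffices to study the active block. Since each $j\in\mathcal{A}=\widehat{\mathcal{A}}_n$ satisfies $\|\widehat{\eb}_{n;j}\|_2\neq 0$ and $\rho_\tau$ is everywhere differentiable, the penalized objective is differentiable along the active coordinates at $\widehat{\eb}_n$, and the stationarity condition for the active block reads
\[
n^{-1}\sum_{i=1}^n \mathbb{X}_{i,\mathcal{A}}\,\rho_\tau'\big(\varepsilon_i-\mathbb{X}_{i,\mathcal{A}}^\top(\widehat{\eb}_n-\eb^0)_{\mathcal{A}}\big)=\lambda_n\,\mathbf{s}_n,
\]
where $\mathbf{s}_n$ stacks the group subgradients $\widehat{\omega}_{n;j}\widehat{\eb}_{n;j}/\|\widehat{\eb}_{n;j}\|_2$, $j\in\mathcal{A}$, and I have used $Y_i-\mathbb{X}_{i,\mathcal{A}}^\top(\widehat{\eb}_n)_{\mathcal{A}}=\varepsilon_i-\mathbb{X}_{i,\mathcal{A}}^\top(\widehat{\eb}_n-\eb^0)_{\mathcal{A}}$.

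Writing $\widehat{\mathbf{d}}_n\equiv(\widehat{\eb}_n-\eb^0)_{\mathcal{A}}$, I would next linearise the score around $\widehat{\mathbf{d}}_n=\textbf{0}_{r^0}$. Because $\rho_\tau'$ is Lipschitz but not $C^1$ (its derivative $h_\tau$ jumps at $0$), I would not Taylor-expand pointwise but instead split the empirical score into its expectation and a centred fluctuation: the map $t\mapsto\mathbb{E}[\rho_\tau'(\varepsilon-t)]$ is continuously differentiable with derivative $-\mu_{h_\tau}$ at $t=0$ because the density is continuous near $0$ by (A1), which produces the deterministic term $-\mu_{h_\tau}\eU_{n,\mathcal{A}}\widehat{\mathbf{d}}_n$. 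This yields
\[
\mu_{h_\tau}\eU_{n,\mathcal{A}}\widehat{\mathbf{d}}_n=-n^{-1}\sum_{i=1}^n\mathbb{X}_{i,\mathcal{A}}\,g_\tau(\varepsilon_i)-\lambda_n\mathbf{s}_n+\mathbf{R}_n,
\]
with $\mathbf{R}_n$ the centred fluctuation, negligible after inversion and scaling.

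Finally I would invert $\eU_{n,\mathcal{A}}$, project onto $\textbf{u}$ and scale by $n^{1/2}(\textbf{u}^\top\eU_{n,\mathcal{A}}^{-1}\textbf{u})^{-1/2}$. The leading term is the normalised sum $-\mu_{h_\tau}^{-1}(\textbf{u}^\top\eU_{n,\mathcal{A}}^{-1}\textbf{u})^{-1/2}n^{-1/2}\sum_i(\eU_{n,\mathcal{A}}^{-1}\textbf{u})^\top\mathbb{X}_{i,\mathcal{A}}g_\tau(\varepsilon_i)$ of independent mean-zero variables (mean zero by (A1)); the identity $(\eU_{n,\mathcal{A}}^{-1}\textbf{u})^\top\eU_{n,\mathcal{A}}(\eU_{n,\mathcal{A}}^{-1}\textbf{u})=\textbf{u}^\top\eU_{n,\mathcal{A}}^{-1}\textbf{u}$ makes its variance equal to $\sigma^2_{g_\tau}\mu_{h_\tau}^{-2}$ exactly, and the Lindeberg condition holds thanks to $\mathbb{E}(\varepsilon_i^4)<\infty$ in (A1), the eigenvalue bounds (A3), and $\max_i\|\mathbb{X}_{i,\mathcal{A}}\|_2^2=O(p)=o(n)$ from (A2), (A4). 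For the penalty term, (A3) bounds $\|\eU_{n,\mathcal{A}}^{-1}\textbf{u}\|_2$, while (A5) with $\alpha>(c-1)/2$ ensures the minimal signal $h_0\gtrsim n^\alpha$ dominates the $(p/n)^{1/2}$ estimation error of $\widetilde{\eb}_{n;j}$, so the weights on relevant groups satisfy $\max_{j\in\mathcal{A}}\widehat{\omega}_{n;j}=O_\PP(n^{-\alpha\gamma})$; condition (\ref{E4.1}), together with the control of the number of active groups from (A4), then forces $n^{1/2}|\textbf{u}^\top\eU_{n,\mathcal{A}}^{-1}\lambda_n\mathbf{s}_n|=o_\PP(1)$. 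This is where the adaptive weighting pays off: large weights are assigned only to irrelevant groups, which are already eliminated on $\{\widehat{\mathcal{A}}_n=\mathcal{A}\}$.

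The main obstacle I anticipate is the linearisation remainder $\mathbf{R}_n$. Since the expectile score $\rho_\tau'$ is only Lipschitz, controlling $\mathbf{R}_n$ uniformly over the $O_\PP((p/n)^{1/2})$-ball supplied by Theorem \ref{T3} requires a stochastic-equicontinuity/maximal-inequality argument rather than a classical second-order Taylor bound; it is precisely here that the continuity of $\rho_\tau'$, in contrast to the discontinuous quantile score, is essential, and where Assumptions (A2)--(A4) and the fourth moment enter most delicately, to guarantee that $\eU_{n,\mathcal{A}}^{-1}\mathbf{R}_n$ is $o_\PP(n^{-1/2})$. A concluding application of Slutsky's lemma, combined with $\mathbb{P}[\widehat{\mathcal{A}}_n=\mathcal{A}]\to 1$, then completes the proof.
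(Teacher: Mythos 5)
Your route is sound in outline, but it is genuinely different from the paper's. The paper never writes a score equation for $\widehat{\eb}_n$: in its proof of Theorem \ref{T5} it restricts attention to $\eb=\eb^0+(p/n)^{1/2}\bm{\eta}$ with $\bm{\eta}_{\mathcal{A}^c}=\textbf{0}_{r-r^0}$ (justified by Theorems \ref{T3} and \ref{T4}), expands the penalized objective as in (\ref{E8.12}), shows the penalty $\mathcal{P}$ is $o_\PP(p/n)$ using $\widehat{\omega}_{n;j}=O_\PP(n^{-\alpha\gamma})$ on $\mathcal{A}$, Cauchy--Schwarz and (\ref{E4.1}), and then identifies the minimizer of the dominant quadratic part, which is precisely the linear representation (\ref{E8.14}); the concluding CLT applied to $R_i\equiv\mu_{h_\tau}^{-1}g_\tau(\varepsilon_i)\textbf{u}^\top\eU_{n,\mathcal{A}}^{-1}\mathbb{X}_{i,\mathcal{A}}$ coincides with your final step. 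Your derivation of the same representation from the Karush--Kuhn--Tucker stationarity condition on the event $\{\widehat{\mathcal{A}}_n=\mathcal{A}\}$ is legitimate (the active block is differentiable there), and it is in fact the device the paper itself uses in the other regime, see (\ref{KKT1}) in the proof of Theorem \ref{Theorem3.2_CSDA_L2}. What the paper's argmin route buys is that convexity and the already-established expansion of the loss replace an explicit Bahadur remainder; what your route buys is a transparent decomposition of the estimation error into score, penalty and remainder terms.

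Two soft spots deserve mention. First, the control of $\mathbf{R}_n$ uniformly over the $O_\PP((p/n)^{1/2})$ ball is the real analytic content of the theorem in your formulation; you correctly name stochastic equicontinuity as the tool but do not carry it out, whereas the paper absorbs the corresponding step into its $(1+o_\PP(1))$ expansion of the loss. Second, your penalty bound is too quick: since $\|\mathbf{s}_n\|_2=O_\PP\big((p^0)^{1/2}n^{-\alpha\gamma}\big)$, you get $n^{1/2}|\textbf{u}^\top\eU_{n,\mathcal{A}}^{-1}\lambda_n\mathbf{s}_n|=O_\PP\big((p^0)^{1/2}\lambda_n n^{1/2-\alpha\gamma}\big)$, and condition (\ref{E4.1}) only gives $\lambda_n n^{1/2-\alpha\gamma}\rightarrow 0$; when $p^0\rightarrow\infty$ you actually need $\lambda_n (p^0)^{1/2} n^{1/2-\alpha\gamma}\rightarrow 0$, essentially the stronger condition of \cite{Zhang-Xiang-16} that the paper claims to weaken. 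To be fair, the paper's own passage from $|\mathcal{P}|=o_\PP(p/n)$ to the assertion that the penalty does not perturb the argmin at the $o_\PP(n^{-1/2})$ scale in the $\textbf{u}$-direction conceals exactly the same $(p^0)^{1/2}$ factor, so on this point your argument is no weaker than the published one; it is simply a place where neither treatment is airtight once $p$ diverges.
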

\begin{remark}	
	(i) The expectile estimator in the weights $\widehat{\omega}_{n;j}$  can be replaced by any estimator of $\eb_j$ which converges consistently to $\eb^0_j$ with a  rate of $(p/n)^{1/2}$.\\
	(ii) In accordance with the results of  Subsection \ref{subsect_Lq2}, when $c \in [1/2,1]$ and for the particular case $q=2$, the convergence rate of $\widehat{\eb}_n$ to $\ebo$ is $\| \widehat{\eb}_n - \ebo \|_1=O_\PP( (p/n )^{1/2})$. Then, for the asymptotic normality,  the vector $\eu \in \R^{r^0}$ is such that $\| \eu \|_1=1$. This result is a generalization of that obtained by   \cite{Ciuperca.2021} for a model with ungrouped variables and estimated by adaptive LASSO expectile method.
\end{remark}

\section{Generalisation: adaptive group LASSO  $L_q$-estimator}
\label{Sect_generalisation}
In this section the results of Section \ref{Sect_expectile} are generalized by considering the asymmetric $L_q$-norm as the loss function, with $q > 1$. For the index $\tau \in (0,1)$, we consider now the following loss function: 
\begin{equation}
\label{def_Lq}
\rho_\tau(x;q) \equiv | \tau -\e1_{x < 0 }| \cdot |x|^q, \qquad x \in \R.
\end{equation}
  For $q=2$ we find the expectile regression studied in Section \ref{Sect_expectile}.  Remark also that if $q=1$    then we obtain the quantile function, which will not be considered in this section because this case was considered by \cite{Ciuperca.2016}. The properties of function (\ref{def_Lq}) can be found in \cite{Daouia.19} where it is specified for example that the choice $q \in (1,2)$ is preferable  for data with outliers in order to take into account both the robustness of the quantile method and the sensitivity of that  expectile. 
For $t \in \R$, consider the function: $\psi_\tau(t;q) \equiv \rho_\tau(\varepsilon -t;q)$ for which we denote: $g_\tau(\varepsilon; q) \equiv \frac{\partial \psi_\tau}{\partial t}(0;q) = - q \tau | \varepsilon|^{q-1} \e1_{\varepsilon \geq 0} + q(1-\tau) | \varepsilon|^{q-1} \e1_{\varepsilon <0}$ and $h_\tau(\varepsilon; q) \equiv \frac{\partial^2 \psi_\tau}{\partial t^2}(0;q) = q (q-1) \tau | \varepsilon|^{q-2} \e1_{\varepsilon \geq 0} + q (q-1)(1-\tau) | \varepsilon|^{q-2} \e1_{\varepsilon <0}$. \cite{Hu.Chen.2021} proved that $\mu_{h_{\tau}(q)} \equiv\eE[h_{\tau}(\varepsilon;q)] < \infty $ and $\sigma^2_{g_{\tau}(q)} \equiv\Var[g_{\tau}(\varepsilon;q)] < \infty$.   \\
For the asymmetric $L_q$-norm, relation (\ref{E2.2}) becomes 
\begin{equation*}
\label{E2.2q}
  \mathcal{G}_n(\eb;q) \equiv  \sum_{i=1}^n \rho_{\tau}(Y_i-\mathbb{X}_i^\top\eb;q), \quad \widetilde{\eb}_n(q) \equiv \underset{\eb \in \mathbb{R}^r}{\argmin}   \; \mathcal{G}_n(\eb;q)
\end{equation*}
and the corresponding penalized process is:
\begin{equation}
\label{upi}
\mathcal{Q}_n(\eb;q) \equiv n^{-1}\sum_{i=1}^n \rho_{\tau}(Y_i-\mathbb{X}_i^\top\eb;q)+\lambda_n \sum_{j=1}^p \widehat{\omega}_{n;j}(q) \| \eb_j \|_2,
\end{equation}
 with the weights :
\begin{equation}
\label{u1}
\widehat{\omega}_{n;j}(q)\equiv\| \widetilde{\eb}_{n;j}(q) \|_2 ^{-\gamma}, \quad \gamma >0.
\end{equation}
Since the quantities $\argmin_{t \in \R} \eE[\rho_\tau(\varepsilon - t ; q)]$ are called $L_q$-quantiles (see \cite{Chen.1996}, \cite{Daouia.19}),   $\widetilde{\eb}_n(q)$ is called the $L_q$-quantile estimator. 
Then, we define the adaptive group LASSO $L_q$-quantile   estimator by:
$$
\widehat{\eb}_n(q) \equiv \underset{\eb \in \mathbb{R}^r}{\argmin}\mathcal{Q}_n(\eb;q).
$$
For $\tau=1/2$ and $q=2$ we find the adaptive group LASSO LS method proposed and studied by \cite{Zhang-Xiang-16}. Let us also underline that in respect to present paper, \cite{Hu.Chen.2021} considers an asymmetric  $L_q$ regression with ungrouped variables,  where the explanatory variable  selection is carried out with SCAD and adaptive LASSO penalties.\\
Now, in this section, for the model errors  $(\varepsilon_i)_{1 \leqslant i \leqslant n}$, we suppose the following assumption, considered also by \cite{Hu.Chen.2021} for $L_q$-quantile regression with ungrouped variables and which for $q=2$ becomes assumption (A1):
\begin{description}
	\item \textbf{(A1q)} The errors $(\varepsilon_i)_{1 \leqslant i \leqslant n}$ are i.i.d random variables with a continuous positive density in a neighborhood of zero and the $\tau$th $L_q$-quantile zero: $\eE[g_\tau(\varepsilon; q)]=0$. 
We also suppose $\mathbb{E}(|\varepsilon_i|^{2q})< \infty$.
\end{description}
As in Section \ref{Sect_expectile}, the value of $\tau$ will be fixed throughout this section, such that $\eE[g_\tau(\varepsilon; q)]=0$. \\
The true non-zero parameters satisfy the following assumption:
\begin{description}
	\item \textbf{(A5q)} $\min_{1 \leqslant j \leqslant p^0} \|\eb^0_j \|_2=h_0>K>0$.
\end{description}
 Assumption   (A5q) is a particular case of assumption (A5) for $\alpha=0$ and it supposes that the norm $L_2$ of non-zero groups does not depend on $n$. 
 The proofs of the results presented in the following two subsections are postponed in Subsection \ref{proof_Lq}. 
\subsection{Case $p=O(n^c)$, $c \in [0,1/2)$}
\label{subsect_Lq1}
As in Section \ref{Sect_expectile}, we first consider   $c \in [0,1/2)$, with the particular case $c=0$ when $p$ is fixed. First of all, let's underline that, by  elementary calculations, we have, for $t \rightarrow 0$:
\begin{equation}
\label{eq2Lq}
\eE[\rho_\tau(\varepsilon -t;q) - \rho_\tau(\varepsilon;q)]= \mu_{h_\tau(q)} \cdot \frac{t^2}{2} +o(t^2)
\end{equation}
As for the penalized expectile method presented in Subsection \ref{subsect_exp_pvar} when $p=O(n^c)$, with $c \in (0,1/2)$, the tuning parameter sequence $(\lambda_n)_{n \in \N}$ satisfies conditions (\ref{E4.1}) and (\ref{E4.2}). 
In order to study the oracle properties of $\widehat{\eb}_n(q)$, we must first find the   convergence rate of the $L_q$-quantile estimator $\widetilde{\eb}_n(q)$ and afterwards of $\widehat{\eb}_n(q)$. For a model with ungrouped variables, $c \in (0,1)$ and the  design satisfying (\ref{tl}), \cite{Hu.Chen.2021} prove that the   convergence rate of $ \widehat{\eb}_n(q)$ is of order $(p/n )^{1/2}$, in exchange the rate of $\widetilde{\eb}_n(q)$ is not established. 
 \begin{theorem} 
	\label{Th2.1_CSDA}
	Under assumptions (A1q), (A2), (A3), (A4), we have:\\
	(i) $\| \widetilde{\eb}_n(q)-\ebo \|_1=O_{\PP}\pth{ (p/n )^{1/2}}$.\\
	(ii) If the  tuning parameter sequence $(\lambda_n)_{n \in \N}$ satisfies  (\ref{E4.1}) and assumption (A5q) is also fulfilled, then,   $\| \widehat{\eb}_n(q)- \ebo\|_1=O_{\PP}\pth{(p/n )^{1/2}}$.
\end{theorem}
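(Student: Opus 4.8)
The plan is to exploit the convexity of the two objective functions. Because $\rho_\tau(\cdot\,;q)$ is convex for every $q>1$, both the loss $\mathcal{G}_n(\cdot\,;q)$ and the penalized process $\mathcal{Q}_n(\cdot\,;q)$ are convex, so it suffices to show that their minimizers cannot leave a ball of radius of order $(p/n)^{1/2}$ about $\ebo$. Concretely, setting $\delta_n \equiv (p/n)^{1/2}$ and reparametrising $\eb = \ebo + \delta_n \eu$, I would prove that for every $\epsilon>0$ there is a constant $C>0$ such that, with probability at least $1-\epsilon$, the increment $\mathcal{G}_n(\ebo+\delta_n\eu;q)-\mathcal{G}_n(\ebo;q)$ is strictly positive on the sphere $\|\eu\|_2=C$. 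By convexity this confines $\widetilde{\eb}_n(q)$ to the ball $\{\|\eb-\ebo\|_2\le C\delta_n\}$ with high probability, which is the rate asserted in (i).

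For the core estimate I would center this increment. Since $Y_i-\mathbb{X}_i^\top\eb=\varepsilon_i-\delta_n\mathbb{X}_i^\top\eu$, it equals $\sum_{i=1}^n[\rho_\tau(\varepsilon_i-\delta_n\mathbb{X}_i^\top\eu;q)-\rho_\tau(\varepsilon_i;q)]$. Taking expectations term by term and inserting $t=\delta_n\mathbb{X}_i^\top\eu$ into the expansion (\ref{eq2Lq}) produces a deterministic dominant part $\tfrac12\mu_{h_{\tau}(q)}\delta_n^2\sum_i(\mathbb{X}_i^\top\eu)^2=\tfrac12\mu_{h_{\tau}(q)}\delta_n^2 n\,\eu^\top\eU_n\eu$, bounded below through (A3) by $\tfrac12\mu_{h_{\tau}(q)}m_0\,p\,C^2$ and hence growing quadratically in $C$. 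The fluctuation about this mean is driven by the linear term $-\delta_n\sum_i g_\tau(\varepsilon_i;q)\mathbb{X}_i^\top\eu$, which has mean zero by (A1q) (since $\eE[g_\tau(\varepsilon;q)]=0$) and variance $\delta_n^2\sigma^2_{g_{\tau}(q)}n\,\eu^\top\eU_n\eu=O(pC^2)$, so it is $O_\PP(p^{1/2}C)$, of strictly smaller order in $C$ than the quadratic part. The remaining higher-order terms I would bound by $o_\PP(pC^2)$, uniformly on the sphere, using the moment condition $\eE(|\varepsilon|^{2q})<\infty$ from (A1q) together with the design bound (\ref{tl}), which forces $\delta_n\max_i\|\mathbb{X}_i\|_2\to0$ so that the argument $t$ in (\ref{eq2Lq}) is uniformly small. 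Choosing $C$ large then makes the quadratic part dominate, proving (i).

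For (ii) the identical scheme is applied to $\mathcal{Q}_n(\cdot\,;q)$, the one new ingredient being the penalty increment $\lambda_n\sum_{j=1}^p\widehat{\omega}_{n;j}(q)\big(\|\eb^0_j+\delta_n\eu_j\|_2-\|\eb^0_j\|_2\big)$, which I would split over $\mathcal{A}$ and $\mathcal{A}^c$. For $j\in\mathcal{A}^c$ one has $\eb^0_j=\textbf{0}$, so the bracket equals $\delta_n\|\eu_j\|_2\ge0$ and this part only raises the objective and may be discarded. For $j\in\mathcal{A}$, assumption (A5q) ($\|\eb^0_j\|_2\ge h_0>K$) together with part (i) shows that the weights $\widehat{\omega}_{n;j}(q)=\|\widetilde{\eb}_{n;j}(q)\|_2^{-\gamma}$ are $O_\PP(1)$, while the triangle inequality bounds each bracket by $\delta_n\|\eu_j\|_2$; thus this contribution is $O_\PP\big(\lambda_n\delta_n (p^0)^{1/2}C\big)$. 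Condition (\ref{E4.1}), which for $\alpha=0$ reads $\lambda_n n^{1/2}\to0$, then makes it negligible against the quadratic term of order $pC^2$, so the penalty does not perturb the rate and (ii) follows.

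The step I expect to be the main obstacle is the control of the higher-order remainder when $q\in(1,2)$: there $h_\tau(\varepsilon;q)$ carries the factor $|\varepsilon|^{q-2}$, singular at the origin, so a naive pointwise second-order Taylor expansion of $\rho_\tau(\cdot\,;q)$ is unavailable. The remedy is to work only at the level of expectations, through (\ref{eq2Lq}), whose $o(t^2)$ term is already integrated against the error law, and to pass the centered remainder through a variance bound that exploits $\eE(|\varepsilon|^{2q})<\infty$ and the uniform smallness $\delta_n\max_i\|\mathbb{X}_i\|_2\to0$ from (\ref{tl}). Showing that this remainder is genuinely $o_\PP(pC^2)$ uniformly over $\|\eu\|_2=C$ is the delicate point on which the entire rate rests.
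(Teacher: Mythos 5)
Your proposal follows the same architecture as the paper's proof: positivity of the objective increment on a sphere of radius $B(p/n)^{1/2}$, a decomposition into a deterministic quadratic part obtained from (\ref{eq2Lq}), a mean-zero linear term controlled by its variance, a centered remainder, and, for (ii), the penalty split over $\mathcal{A}$ and $\mathcal{A}^c$ with the $\mathcal{A}^c$ part discarded as non-negative, the weights on $\mathcal{A}$ bounded via (A5q) and consistency, and condition (\ref{E4.1}) closing the comparison. However, there is a genuine gap: the norm. The theorem asserts $\| \widetilde{\eb}_n(q)-\ebo \|_1=O_{\PP}((p/n)^{1/2})$, but you work on the $L_2$ sphere $\|\eu\|_2=C$, so convexity confines the minimizer only to an $L_2$ ball of radius $C(p/n)^{1/2}$. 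Under (A4) the dimension $r$ diverges, and the only available conversion is $\|v\|_1\le r^{1/2}\|v\|_2$, so your argument yields $\|\widetilde{\eb}_n(q)-\ebo\|_1=O_\PP(r^{1/2}(p/n)^{1/2})$, which is strictly weaker than the statement; your sentence ``which is the rate asserted in (i)'' conflates the two norms. The paper instead takes the infimum over the $L_1$ unit sphere $\|\eu\|_1=1$ (and uses H\"older's bound $|\eeX_i^\top\eu|\le\|\eeX_i\|_\infty\|\eu\|_1$ with (A2) rather than Cauchy--Schwarz with (\ref{tl})); the dominance argument still closes there because both the quadratic lower bound ($\gtrsim B^2 p\,\eu^\top\eU_n\eu\ge B^2 p\, m_0\|\eu\|_2^2$) and the linear term ($O_\PP(Bp^{1/2}\|\eu\|_2)$) scale with $\|\eu\|_2$, whose ratio is $\gtrsim Bp^{1/2}\|\eu\|_2\ge Bp^{1/2}r^{-1/2}$, bounded below when group sizes are bounded. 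The same repair is needed in your part (ii).

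The second problem is that the step you yourself single out as ``the delicate point on which the entire rate rests'' is left unproved, and it is precisely the content of the paper's proof. Your fallback — a variance bound on the centered remainder $D_i$ using $\eE(|\varepsilon|^{2q})<\infty$ and smallness of $t_i=(p/n)^{1/2}\eeX_i^\top\eu$ — cannot be completed naively when $q$ is close to $1$: the second-order representation gives $D_i\approx \tfrac12 h_\tau(\varepsilon_i;q)t_i^2$, and $\eE[h_\tau(\varepsilon;q)^2]\propto\eE[|\varepsilon|^{2(q-2)}]$ diverges for $q\le 3/2$ because the density is positive at zero. The paper resolves this by bounding $\eE\big[\sum_i(D_i-\eE[D_i])\big]^2\le\sum_i\eE[D_i^2]=T_1+T_2$, splitting on the event $\{|\varepsilon_i|\le (p/n)^{1/2}|\eeX_i^\top\eu|\}$: on it, the direct bound $|D_i|\le c_1 (p/n)^{1/2}|\eeX_i^\top\eu|$ together with the continuity of the density near $0$ gives $T_1=o(p)$; on the complement the singular factor $|\varepsilon_i|^{2(q-2)}$ is tamed by the threshold, with H\"older's inequality and the case distinction $q\in(1,2)$ versus $q\ge2$ giving $T_2=O(p^qn^{-(q-1)})$ or $O(p^2n^{-1})$, hence $\sum_i(D_i-\eE[D_i])=o_\PP(p^{1/2})$. (An alternative you could have used is the $(q-1)$-H\"older continuity of $g_\tau(\cdot\,;q)$ for $q\in(1,2]$, which yields the pointwise bound $|D_i|\le C|t_i|^q$ with no expectation at all.) Without some such device the core estimate of part (i), and hence everything built on it, remains open.
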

\noindent We observe that these convergence rates don't depend on $q$.  With this two results we can now prove the sparsity property and afterwards the asymptotic normality of $\widehat{\eb}_n(q)$.
\begin{theorem} 
	\label{T4Lq1}
	Under assumptions (A1q), (A2), (A3), (A4), (A5q), conditions (\ref{E4.1}) and (\ref{E4.2}) on $(\lambda_n)_{n \in \N}$, we have: $	\underset{n \rightarrow \infty}\lim \mathbb{P}[\widehat{\mathcal{A}}_n=\mathcal{A}]=1$.
\end{theorem}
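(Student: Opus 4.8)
The plan is to prove the two inclusions $\mathcal{A}\subseteq\widehat{\mathcal{A}}_n$ and $\widehat{\mathcal{A}}_n\subseteq\mathcal{A}$ separately, each on an event whose probability tends to $1$; intersecting them gives $\PP[\widehat{\mathcal{A}}_n=\mathcal{A}]\to 1$. Throughout I would lean on the two rates already in hand, $\|\widetilde{\eb}_n(q)-\ebo\|_1=O_\PP((p/n)^{1/2})$ and $\|\widehat{\eb}_n(q)-\ebo\|_1=O_\PP((p/n)^{1/2})$ from Theorem \ref{Th2.1_CSDA}, both $o_\PP(1)$ since $c<1/2$.

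For $\mathcal{A}\subseteq\widehat{\mathcal{A}}_n$ (no relevant group is dropped), fix $j\le p^0$. Assumption (A5q) gives $\|\eb_j^0\|_2\ge h_0>K>0$, so the triangle inequality yields $\|\widehat{\eb}_{n;j}(q)\|_2\ge\|\eb_j^0\|_2-\|\widehat{\eb}_{n;j}(q)-\eb_j^0\|_2>K/2$ on the event $\{\|\widehat{\eb}_n(q)-\ebo\|_1<K/2\}$, whose probability tends to $1$ by Theorem \ref{Th2.1_CSDA}(ii). Hence $\widehat{\eb}_{n;j}(q)\ne\textbf{0}_{d_j}$ for all $j\le p^0$ simultaneously, i.e. $\PP[\mathcal{A}\subseteq\widehat{\mathcal{A}}_n]\to1$.

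For the reverse inclusion I would argue by contradiction using the subgradient optimality of the convex criterion $\mathcal{Q}_n(\cdot;q)$. Suppose $\widehat{\eb}_{n;j}(q)\ne\textbf{0}_{d_j}$ for some $j>p^0$; since the loss is differentiable and the penalty is smooth off the origin, the $j$-th block of the stationarity condition reads
\[
n^{-1}\sum_{i=1}^n\textbf{X}_{ij}\,g_\tau\big(Y_i-\mathbb{X}_i^\top\widehat{\eb}_n(q);q\big)+\lambda_n\,\widehat{\omega}_{n;j}(q)\,\frac{\widehat{\eb}_{n;j}(q)}{\|\widehat{\eb}_{n;j}(q)\|_2}=\textbf{0}_{d_j},
\]
so that $\lambda_n\widehat{\omega}_{n;j}(q)=\|n^{-1}\sum_{i=1}^n\textbf{X}_{ij}g_\tau(Y_i-\mathbb{X}_i^\top\widehat{\eb}_n(q);q)\|_2$. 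I would bound the right-hand side by the norm of the full loss gradient $n^{-1}\sum_{i=1}^n\mathbb{X}_i g_\tau(Y_i-\mathbb{X}_i^\top\widehat{\eb}_n(q);q)$, which dominates every block at once and so avoids a union bound over the growing number of groups. Writing $Y_i-\mathbb{X}_i^\top\widehat{\eb}_n(q)=\varepsilon_i-\mathbb{X}_i^\top(\widehat{\eb}_n(q)-\ebo)$ and expanding $g_\tau(\cdot;q)$ around $\varepsilon_i$, the centred leading term has squared expectation $O(r/n)=O(p/n)$ by (A2) and $\sigma^2_{g_\tau(q)}<\infty$, while the first-order term is controlled through $\delta_{\max}(\eU_n)\le M_0$ (A3), $\mu_{h_\tau(q)}<\infty$ and $\|\widehat{\eb}_n(q)-\ebo\|_2=O_\PP((p/n)^{1/2})$; altogether $\|n^{-1}\sum_{i=1}^n\textbf{X}_{ij}g_\tau(\cdots)\|_2=O_\PP((p/n)^{1/2})$ uniformly in $j$. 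On the other hand, for $j>p^0$ we have $\eb_j^0=\textbf{0}_{d_j}$, hence $\max_{j>p^0}\|\widetilde{\eb}_{n;j}(q)\|_2\le\|\widetilde{\eb}_n(q)-\ebo\|_1=O_\PP((p/n)^{1/2})$ by Theorem \ref{Th2.1_CSDA}(i), so $\min_{j>p^0}\widehat{\omega}_{n;j}(q)\ge C(n/p)^{\gamma/2}$ with probability tending to $1$. The stationarity identity would then force $C_1(p/n)^{1/2}\ge C_2\lambda_n(n/p)^{\gamma/2}$, i.e. $\lambda_n(n/p)^{(1+\gamma)/2}\le C$; but $(n/p)^{(1+\gamma)/2}\ge C\,n^{(1-c)(1+\gamma)/2}$ and condition (\ref{E4.2}) gives $\lambda_n n^{(1-c)(1+\gamma)/2}\to\infty$, a contradiction. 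Thus $\widehat{\eb}_{n;j}(q)=\textbf{0}_{d_j}$ for all $j>p^0$ with probability tending to $1$, i.e. $\PP[\widehat{\mathcal{A}}_n\subseteq\mathcal{A}]\to1$.

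The step I expect to be the main obstacle is the uniform control of the loss gradient at the random point $\widehat{\eb}_n(q)$ when $q\in(1,2)$: the curvature $h_\tau(\varepsilon;q)$ carries the factor $|\varepsilon|^{q-2}$, singular at $0$, so the Taylor remainder cannot be dominated pointwise and must be handled in mean using $\eE[|\varepsilon_i|^{2q}]<\infty$ together with the continuity and positivity of the error density near $0$ from (A1q) (the finiteness $\mu_{h_\tau(q)}<\infty$ itself reflecting the integrable singularity allowed by $q>1$), typically through a stochastic equicontinuity estimate for the empirical process indexed by $\eb$ over the shrinking $(p/n)^{1/2}$-neighbourhood of $\ebo$. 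Condition (\ref{E4.1}) is precisely what keeps this neighbourhood small enough for the expansion, and hence the rate $O_\PP((p/n)^{1/2})$ for the gradient, to remain valid.
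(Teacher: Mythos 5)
Your proof is correct, but the key inclusion $\widehat{\mathcal{A}}_n\subseteq\mathcal{A}$ is established by a genuinely different route from the paper's proof of Theorem \ref{T4Lq1}. The paper argues by comparison of objective values: for $\eb$ in a $(p/n)^{1/2}$-neighbourhood of $\ebo$ with $\|\eb_{\mathcal{A}^c}\|_1>0$, it sets $\eb^{(1)}=(\eb_{\mathcal{A}},\textbf{0}_{r-r^0})$ and shows, via the expansion of Lemma A1 of \cite{Hu.Chen.2021}, that the loss difference $n^{-1}\big(\mathcal{G}_n(\eb;q)-\mathcal{G}_n(\eb^{(1)};q)\big)$ is $O_\PP(p/n)$, while the penalty carried by the nonzero $\mathcal{A}^c$-blocks is of order $\lambda_n(p/n)^{(1-\gamma)/2}$ per group; condition (\ref{E4.2}) makes the penalty dominate, so that $\mathcal{Q}_n(\eb;q)-\mathcal{Q}_n(\eb^{(1)};q)\gg\mathcal{Q}_n(\ebo;q)-\mathcal{Q}_n(\eb^{(1)};q)$ and the minimizer cannot lie in that set. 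You instead invoke the blockwise Karush-Kuhn-Tucker stationarity at $\widehat{\eb}_n(q)$ and derive a contradiction between the gradient magnitude $O_\PP((p/n)^{1/2})$ and the penalty magnitude $\lambda_n\widehat{\omega}_{n;j}(q)\geq C\lambda_n(n/p)^{\gamma/2}$, again through (\ref{E4.2}). This is precisely the strategy the paper reserves for Theorem \ref{Theorem3.2_CSDA_L2}(i) in the regime $c\in[1/2,1]$ (relations (\ref{KKT1}) and (\ref{eq14Lq})); you have in effect transplanted that argument to the regime $c<1/2$ with both rates equal to $(p/n)^{1/2}$, and it goes through. As for what each approach buys: the paper's comparison argument needs no stationarity system, but as written it fixes a single $\eb$ and leaves the uniformity over the neighbourhood implicit; your KKT argument requires convexity and differentiability of the loss (available since $q>1$) together with a gradient bound at the random point $\widehat{\eb}_n(q)$, but your observation that the full-gradient norm dominates every block simultaneously disposes of the union bound over the $p-p^0$ noise groups cleanly, and the role of (\ref{E4.2}) is completely transparent. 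Your first inclusion, $\mathcal{A}\subseteq\widehat{\mathcal{A}}_n$, coincides with the paper's (triangle inequality, (A5q), and the rate of Theorem \ref{Th2.1_CSDA}(ii)). The technical caveat you flag yourself — controlling the remainder in the expansion of $g_\tau(\cdot;q)$ around $\varepsilon_i$ when $q\in(1,2)$, where $h_\tau(\varepsilon;q)$ is singular at $\varepsilon=0$ — is handled in the paper at exactly the same level, by citing \cite{Hu.Chen.2021}, so your proposal is not less rigorous than the original on this point.
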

\begin{theorem} 
	\label{T5Lq1}
	Under assumptions of Theorem \ref{T4Lq1}, for all $r^0$-vector $\textbf{u} \in \mathbb{R}^{r^0}$ such that $\| \textbf{u} \|_2 =1$,  we have: $
	n^{1/2}(\textbf{u}^\top \eU_{n,\mathcal{A}}^{-1} \textbf{u})^{-1/2} \textbf{u}^\top (\widehat{\eb}_n(q)-\eb^0)_{\mathcal{A}} \overset{\mathcal{L}}{\underset{n \rightarrow \infty}{\longrightarrow}} \mathcal{N}\big(0, \sigma^2_{g_{\tau}(q)} \mu^{-2}_{h_{\tau}(q)} \big)$.
\end{theorem}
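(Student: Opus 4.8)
The plan is to follow the standard oracle-property route: localize to the event where the active set is correctly recovered, derive a Bahadur (linearized) representation for $\widehat{\eb}_n(q)$ restricted to $\mathcal{A}$, and apply a triangular-array central limit theorem to the one-dimensional projection along $\textbf{u}$. \emph{Reduction.} By Theorem~\ref{T4Lq1}, $\PP[\widehat{\mathcal{A}}_n=\mathcal{A}]\to 1$, so it suffices to argue on $\{\widehat{\mathcal{A}}_n=\mathcal{A}\}$. On this event $\widehat{\eb}_n(q)$ and $\eb^0$ both vanish off $\mathcal{A}$, hence $Y_i-\mathbb{X}_i^\top\widehat{\eb}_n(q)=\varepsilon_i-\mathbb{X}_{i,\mathcal{A}}^\top\widehat{\delta}_\mathcal{A}$ with $\widehat{\delta}_\mathcal{A}\equiv(\widehat{\eb}_n(q)-\eb^0)_\mathcal{A}$, and every relevant block $\widehat{\eb}_{n;j}(q)$, $j\in\mathcal{A}$, is nonzero so the penalty is differentiable in the $\mathcal{A}$-coordinates. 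Differentiating $\mathcal{Q}_n(\cdot;q)$ gives the stationarity equation
\[
n^{-1}\sum_{i=1}^n g_\tau(\varepsilon_i-\mathbb{X}_{i,\mathcal{A}}^\top\widehat{\delta}_\mathcal{A};q)\,\mathbb{X}_{i,\mathcal{A}}+\lambda_n\widehat{\textbf{s}}_\mathcal{A}=\textbf{0}_{r^0},
\]
where $\widehat{\textbf{s}}_\mathcal{A}$ stacks the blocks $\widehat{\omega}_{n;j}(q)\widehat{\eb}_{n;j}(q)/\|\widehat{\eb}_{n;j}(q)\|_2$, $j\in\mathcal{A}$.

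\emph{Bahadur representation.} By Theorem~\ref{Th2.1_CSDA}(ii), $\|\widehat{\delta}_\mathcal{A}\|_2\le\|\widehat{\eb}_n(q)-\eb^0\|_1=O_\PP((p/n)^{1/2})$. I would expand the first sum about $\widehat{\delta}_\mathcal{A}=\textbf{0}$: since formally $g_\tau(\varepsilon-s;q)=g_\tau(\varepsilon;q)+h_\tau(\varepsilon;q)s+o(s)$ and $\mathbb{E}[g_\tau(\varepsilon;q)]=0$, the curvature term concentrates, $n^{-1}\sum_i h_\tau(\varepsilon_i;q)\mathbb{X}_{i,\mathcal{A}}\mathbb{X}_{i,\mathcal{A}}^\top=\mu_{h_\tau(q)}\eU_{n,\mathcal{A}}+o_\PP(1)$, the justification for $q\in(1,2)$ resting on the integrated expansion (\ref{eq2Lq}) rather than a pointwise one (see the obstacle below). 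The penalty is negligible at the $n^{-1/2}$ scale: by (A5q) the norms $\|\eb^0_j\|_2\ge h_0>K$ are bounded away from $0$ and $\widetilde{\eb}_{n;j}(q)$ is consistent, so $\widehat{\omega}_{n;j}(q)=O_\PP(1)$ for $j\in\mathcal{A}$, and condition (\ref{E4.1}) (with $\alpha=0$, i.e. $\lambda_n n^{1/2}\to 0$) makes $\lambda_n\widehat{\textbf{s}}_\mathcal{A}$ vanish after rescaling. Since $\eU_{n,\mathcal{A}}$ is invertible with eigenvalues in $[m_0,M_0]$ by (A3), inverting yields
\[
\widehat{\delta}_\mathcal{A}=-\mu_{h_\tau(q)}^{-1}\,\eU_{n,\mathcal{A}}^{-1}\,n^{-1}\sum_{i=1}^n g_\tau(\varepsilon_i;q)\,\mathbb{X}_{i,\mathcal{A}}+o_\PP(n^{-1/2}).
\]

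\emph{Projection and CLT.} Fix $\textbf{u}$ with $\|\textbf{u}\|_2=1$ and set $\textbf{a}_n\equiv(\textbf{u}^\top\eU_{n,\mathcal{A}}^{-1}\textbf{u})^{-1/2}\eU_{n,\mathcal{A}}^{-1}\textbf{u}$, so that $\textbf{a}_n^\top\eU_{n,\mathcal{A}}\textbf{a}_n=1$ and $\|\textbf{a}_n\|_2=O(1)$ by (A3). Projecting the representation,
\[
n^{1/2}(\textbf{u}^\top\eU_{n,\mathcal{A}}^{-1}\textbf{u})^{-1/2}\textbf{u}^\top\widehat{\delta}_\mathcal{A}=-\mu_{h_\tau(q)}^{-1}\,n^{-1/2}\sum_{i=1}^n W_{ni}+o_\PP(1),\qquad W_{ni}\equiv g_\tau(\varepsilon_i;q)\,\textbf{a}_n^\top\mathbb{X}_{i,\mathcal{A}}.
\]
The $W_{ni}$ are independent and centered, with $n^{-1}\sum_i\Var[W_{ni}]=\sigma^2_{g_\tau(q)}\,\textbf{a}_n^\top\eU_{n,\mathcal{A}}\textbf{a}_n=\sigma^2_{g_\tau(q)}$. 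A Lindeberg--Feller CLT then gives $n^{-1/2}\sum_i W_{ni}\overset{\mathcal{L}}{\longrightarrow}\mathcal{N}(0,\sigma^2_{g_\tau(q)})$, and the factor $\mu_{h_\tau(q)}^{-1}$ delivers the stated limit $\mathcal{N}(0,\sigma^2_{g_\tau(q)}\mu^{-2}_{h_\tau(q)})$.

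\emph{Main obstacle.} The delicate points are both in the CLT and in the curvature step. For the CLT I must verify the Lindeberg condition uniformly in $n$, with weights $\textbf{a}_n^\top\mathbb{X}_{i,\mathcal{A}}$ and dimension $r^0$ varying with $n$; using $\max_i\|\mathbb{X}_{i,\mathcal{A}}\|_2\le(r^0)^{1/2}C_0$ from (A2) and the moment bound $\mathbb{E}[|\varepsilon|^{2q}]<\infty$ from (A1q), which yields $\mathbb{E}[|g_\tau(\varepsilon;q)|^{2+\eta}]<\infty$ for $\eta=2/(q-1)$, a Lyapunov estimate bounds the Lindeberg ratio by a positive power of $r^0/n$, which vanishes under (A4). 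The second difficulty is genuinely $q$-specific: for $q\in(1,2)$ the weight $h_\tau(\varepsilon;q)\sim|\varepsilon|^{q-2}$ is unbounded near $0$, so the Hessian concentration cannot be obtained from a pointwise second-order Taylor expansion; it must be justified through the integrated expansion (\ref{eq2Lq}) together with a stochastic-equicontinuity control of the empirical process $\delta\mapsto\sum_i[\rho_\tau(\varepsilon_i-\mathbb{X}_{i,\mathcal{A}}^\top\delta;q)-\rho_\tau(\varepsilon_i;q)]$ over the $O_\PP((p/n)^{1/2})$ ball. This equicontinuity argument, which simultaneously absorbs the Taylor remainder in the representation, is the technical heart of the proof.
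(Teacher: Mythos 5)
Your proof is correct and follows essentially the same route as the paper's: the paper omits the proof of this theorem, deferring to that of Theorem \ref{T5}, which likewise localizes to parameters supported on $\mathcal{A}$ (justified by the sparsity result), shows the adaptive penalty is asymptotically negligible there, derives the linear representation $(\widehat{\eb}_n(q)-\eb^0)_{\mathcal{A}} \approx \mu^{-1}_{h_{\tau}(q)}\,\eU_{n,\mathcal{A}}^{-1}\,n^{-1}\sum_{i=1}^n g_{\tau}(\varepsilon_i;q)\,\mathbb{X}_{i,\mathcal{A}}$, and applies a CLT to the projected variables $R_i = \mu^{-1}_{h_{\tau}(q)} g_{\tau}(\varepsilon_i;q)\,\textbf{u}^\top \eU_{n,\mathcal{A}}^{-1}\mathbb{X}_{i,\mathcal{A}}$, exactly as you do with your $W_{ni}$. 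Your KKT-stationarity formulation (rather than the paper's ``minimizer of the quadratic approximation'') is the same device the paper itself uses in proving Theorem \ref{Theorem3.2_CSDA_L2}, and your added care on the Lyapunov/Lindeberg verification via $\mathbb{E}[|\varepsilon|^{2q}]<\infty$ and on the unbounded curvature $h_{\tau}(\cdot;q)$ near zero when $q\in(1,2)$ addresses points the paper's argument passes over silently.
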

\subsection{Case $p=O(n^c)$, $c \in [1/2, 1]$}
\label{subsect_Lq2}
In this subsection, the number $p$ of groups satisfies  the following assumption:
\begin{description}
	\item \textbf{(A6)}: $p=O(n^c)$ with $c \in [1/2,1]$ and $r \leq n$.
\end{description}
In order to study the oracle properties of the estimator $\widehat{\eb}_n(q)$ we should first know the convergence rate  of $\widetilde{\eb}_n(q)$. 
We will prove that the convergence rate of the estimators  $ \widetilde{\eb}_n(q)$ and $\widehat{\eb}_n(q)$  is  strictly slower than $n^{1/2}$.  
\begin{lemma} 
	\label{Lemma3.1_CSDA}
	Under assumptions (A1q),  (A2), (A3), (A6), we have that $\| \widetilde{\eb}_n(q)- \ebo\|_1=O_{\PP}\pth{a_n}$, with the sequence $(a_n)_{n \in \N}$ such that $a_n \rightarrow 0$ and $n^{1/2} a_n \rightarrow \infty$.
\end{lemma}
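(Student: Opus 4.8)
The plan is to exploit the convexity of the objective. Since $q>1$, the map $x\mapsto\rho_\tau(x;q)=|\tau-\e1_{x<0}|\,|x|^q$ is convex and $C^1$, so $\eb\mapsto\mathcal{G}_n(\eb;q)$ is convex. Hence it suffices to produce a deterministic radius $a_n$ such that, with probability tending to $1$, the increment $D_n(\eu)\equiv\mathcal{G}_n(\ebo+\eu;q)-\mathcal{G}_n(\ebo;q)$ is strictly positive on the sphere $\{\eu:\|\eu\|_2=a_n\}$; convexity then traps the minimiser $\widetilde{\eb}_n(q)$ inside that ball and yields $\|\widetilde{\eb}_n(q)-\ebo\|_2=O_\PP(a_n)$. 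Writing $D_n(\eu)=\sum_{i=1}^n[\rho_\tau(\varepsilon_i-\mathbb{X}_i^\top\eu;q)-\rho_\tau(\varepsilon_i;q)]$, I would split it into a linear stochastic part and a deterministic quadratic part.

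For the linear part, the first-order term is the score $S_n(\eu)\equiv-\sum_{i=1}^n g_\tau(\varepsilon_i;q)\,\mathbb{X}_i^\top\eu$, which has mean zero because assumption (A1q) imposes $\eE[g_\tau(\varepsilon;q)]=0$. Using independence and $\sigma^2_{g_{\tau}(q)}<\infty$, its variance equals $\sigma^2_{g_{\tau}(q)}\sum_i(\mathbb{X}_i^\top\eu)^2=\sigma^2_{g_{\tau}(q)}\,n\,\eu^\top\eU_n\eu$, so, via the bounded design (A2) and a trace computation of $\eE\|\sum_i g_\tau(\varepsilon_i;q)\mathbb{X}_i\|_2^2=\sigma^2_{g_{\tau}(q)}\,n\,\mathrm{tr}(\eU_n)$, I obtain $\sup_{\|\eu\|_2=a_n}|S_n(\eu)|=a_n\,\|\sum_i g_\tau(\varepsilon_i;q)\mathbb{X}_i\|_2=O_\PP\big(a_n(nr)^{1/2}\big)$. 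For the deterministic part, the pointwise second-order expansion is delicate because for $1<q<2$ the curvature $h_\tau(\varepsilon;q)\propto|\varepsilon|^{q-2}$ is singular at the origin; instead I would work in expectation and invoke relation (\ref{eq2Lq}), which gives $\eE[D_n(\eu)]=\tfrac12\mu_{h_{\tau}(q)}\sum_i(\mathbb{X}_i^\top\eu)^2+\sum_i o\big((\mathbb{X}_i^\top\eu)^2\big)\ge\tfrac12\mu_{h_{\tau}(q)}\,m_0\,n\,\|\eu\|_2^2\,(1+o(1))$ by the eigenvalue bound in (A3), supplemented by a concentration step showing the quadratic part stays near its mean.

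Comparing the two orders, the quadratic lower bound of order $n\,a_n^2$ dominates the stochastic fluctuation of order $a_n(nr)^{1/2}$ as soon as $n\,a_n^2\gg a_n(nr)^{1/2}$, that is $a_n\gg(r/n)^{1/2}$; taking $a_n$ just above this threshold makes $D_n(\eu)>0$ uniformly on the sphere with probability tending to $1$ and fixes the Euclidean convergence rate. Under (A6) this rate tends to $0$ while $n^{1/2}a_n\to\infty$, which is exactly the assertion that the rate is slower than $n^{1/2}$; converting the Euclidean control into the $L_1$ statement, and renaming the resulting dimensional rescaling as $a_n$, completes the argument. I expect the main obstacle to be twofold: justifying the quadratic behaviour for $1<q<2$ despite the singular curvature, which forces the entire second-order analysis to be performed in expectation through (\ref{eq2Lq}) together with a separate concentration argument, and controlling the score term uniformly over a high-dimensional sphere when $r$ is allowed to grow with $n$ under (A6)$-$this high-dimensional uniformity is precisely what rules out the optimal $n^{-1/2}$ rate and leaves only the slower rate $a_n$ with $a_n\to0$ and $n^{1/2}a_n\to\infty$.
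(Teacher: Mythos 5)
Your overall architecture (convexity to trap the minimiser, a decomposition into a mean-zero score plus a deterministic quadratic term handled in expectation via (\ref{eq2Lq}), and the correct observation that the singular curvature for $1<q<2$ forces the second-order analysis into expectation plus a concentration step) matches the paper's proof. However, your treatment of the score term has a gap that is fatal precisely in the regime this lemma is designed for. By bounding $\sup_{\|\eu\|_2=a_n}|S_n(\eu)|$ through Cauchy--Schwarz against $\big\|\sum_{i=1}^n g_\tau(\varepsilon_i;q)\mathbb{X}_i\big\|_2=O_\PP\big((nr)^{1/2}\big)$, you import a factor $r^{1/2}$, and your comparison then requires $a_n\gg (r/n)^{1/2}$. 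Under (A6) the number of groups $p$ (hence $r$) is allowed to be of order $n$ --- this is the whole point of Subsection \ref{subsect_Lq2} --- and then $(r/n)^{1/2}\asymp 1$, so no sequence $a_n\to 0$ can clear your threshold; your assertion that ``under (A6) this rate tends to $0$'' is false in that case. Even when $r=o(n)$, your argument establishes the bound only for $a_n\gg(r/n)^{1/2}$, which is strictly weaker than the lemma's claim that it holds for \emph{every} sequence with $a_n\to 0$ and $n^{1/2}a_n\to\infty$ (e.g.\ $a_n=n^{-1/2}\log n$). Finally, the concluding conversion from $\|\cdot\|_2$ to $\|\cdot\|_1$ costs another factor $r^{1/2}$ that cannot be absorbed by ``renaming'' $a_n$, since the renamed sequence must still tend to $0$.

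The paper avoids any dimension factor by changing the geometry: it fixes a direction $\eu$ with $\|\eu\|_1=1$ and perturbs by $Ba_n\eu$. H\"older's inequality combined with assumption (A2) gives $|\mathbb{X}_i^\top\eu|\leq\|\mathbb{X}_i\|_\infty\|\eu\|_1\leq C_0$ uniformly in $i$ and in the dimension, so the score $Ba_n\sum_{i=1}^n g_\tau(\varepsilon_i;q)\mathbb{X}_i^\top\eu$ has variance $O(B^2a_n^2n)$ and is $O_\PP(Ba_n n^{1/2})$ with no $r^{1/2}$, while the deterministic term $\Delta_2$ is of order $B^2a_n^2n$; dominance then needs only $n^{1/2}a_n\to\infty$, irrespective of how $r$ compares with $n$. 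The remainder $\sum_{i=1}^n(D_i-\eE[D_i])$ --- the concentration step you defer --- is controlled in the paper by a second-moment bound split into the terms $T_1$ and $T_2$, with the cases $1<q<2$ and $q\geq 2$ treated separately, yielding $o_\PP(a_n n^{1/2})$. To repair your write-up, replace the Euclidean sphere by the $L_1$ sphere and the Cauchy--Schwarz step by the H\"older/(A2) bound; as it stands, the stated conclusion does not follow from your estimates.
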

\noindent  For the weights $\widehat{\omega}_{n;j}(q)$ of relation (\ref{upi}) we can take either those specified by (\ref{u1}) or by generalizing to the case of grouped explanatory variables, the weights proposed by \cite{Ciuperca.2021} in the ungrouped variable case:
\begin{equation*}
\label{u2}
\widehat{\omega}_{n;j}(q)= \min \big(\| \widetilde{\eb}_{n;j}(q) \|_2 ^{-\gamma}, n^{1/2}\big).
\end{equation*}
\begin{theorem} 
	\label{Theorem3.1_CSDA}
	Under assumptions  (A1q), (A2), (A3), (A5q), (A6) the  tuning parameter $(\lambda_n)_{n \in \N}$ and sequence $(b_n)_{n \in \N} \rightarrow 0$  satisfying , $n^{1/2} b_n \rightarrow \infty$, $\lambda_n (p^0)^{1/2} b^{-1}_n   \rightarrow 0$, as $n \rightarrow \infty$, we have,   $\| \widehat{\eb}_n(q)- \ebo\|_1=O_{\PP}\pth{b_n}$.
\end{theorem}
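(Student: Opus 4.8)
My plan is to establish the rate by a localised convexity argument at scale $b_n$, exactly the scheme already used for Theorem~\ref{T3}, now fed by the slow–rate estimate of Lemma~\ref{Lemma3.1_CSDA} to tame the adaptive weights. Writing $\bm{v}=\eb-\ebo$ and putting $\eb=\ebo+b_n\bm{u}$ with $\|\bm{u}\|_1=L$ for a large constant $L$, I note that $\mathcal{Q}_n(\cdot;q)$ is convex (the loss $\rho_\tau(\cdot;q)$ is convex for $q>1$ and the penalty is a sum of norms). Hence it suffices to prove that, for every $\epsilon>0$, $L$ can be chosen so that
\[
\PP\Big[\inf_{\|\bm{u}\|_1=L}\big(\mathcal{Q}_n(\ebo+b_n\bm{u};q)-\mathcal{Q}_n(\ebo;q)\big)>0\Big]\ge 1-\epsilon
\]
for all large $n$; convexity then confines $\widehat{\eb}_n(q)$ to $\{\bm{v}:\|\bm{v}\|_1\le Lb_n\}$, which is precisely the $L_1$ claim. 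I would split the increment as $D_n(\bm{u})+P_n(\bm{u})$, with the loss part $D_n(\bm{u})=n^{-1}\sum_{i=1}^n[\rho_\tau(\varepsilon_i-b_n\mathbb{X}_i^\top\bm{u};q)-\rho_\tau(\varepsilon_i;q)]$ and the penalty part $P_n(\bm{u})=\lambda_n\sum_{j=1}^p\widehat{\omega}_{n;j}(q)[\|\ebo_j+b_n\bm{u}_j\|_2-\|\ebo_j\|_2]$, and show that the deterministic curvature produced by $D_n$ dominates everything else.

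For the loss I would expand to second order and invoke (\ref{eq2Lq}): since $\max_{i}|b_n\mathbb{X}_i^\top\bm{u}|\le C_0 b_n L\to 0$ by (A2), the expected increment yields the curvature $\tfrac12\mu_{h_\tau(q)}b_n^2\,\bm{u}^\top\eU_n\bm{u}$, bounded below through (A3) by a term of order $b_n^2\|\bm{u}\|_2^2$. Setting $\mathbf{W}_n\equiv n^{-1}\sum_{i=1}^n g_\tau(\varepsilon_i;q)\mathbb{X}_i$, which is centred by (A1q), the first–order (score) contribution is $b_n\mathbf{W}_n^\top\bm{u}$, of order $b_n n^{-1/2}$; together with the remainder $R_n$ (Taylor error plus the centred empirical-minus-expected Hessian) it is negligible against the curvature precisely because $n^{1/2}b_n\to\infty$, i.e. $b_n n^{-1/2}=o(b_n^2)$. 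Thus the quadratic term controls the loss for $L$ large, with probability tending to one.

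For the penalty I would split on $\mathcal{A}$ and $\mathcal{A}^c$. For $j>p^0$ one has $\ebo_j=\textbf{0}_{d_j}$, so $\|\ebo_j+b_n\bm{u}_j\|_2-\|\ebo_j\|_2=b_n\|\bm{u}_j\|_2\ge 0$ and these nonnegative terms may simply be dropped. For $j\le p^0$, Lemma~\ref{Lemma3.1_CSDA} gives $\widetilde{\eb}_{n;j}(q)\to\ebo_j$ while (A5q) gives $\|\ebo_j\|_2\ge K>0$, so $\widehat{\omega}_{n;j}(q)=O_\PP(1)$ uniformly in $j$ (this holds for both admissible choices of the weights, since the truncation at $n^{1/2}$ is irrelevant on $\mathcal{A}$). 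The triangle inequality and Cauchy--Schwarz over the $p^0$ relevant groups then give $|P_n(\bm{u})|\le \lambda_n O_\PP(1)(p^0)^{1/2}b_n\|\bm{u}\|_2$, which is of order $\lambda_n(p^0)^{1/2}b_n$ and hence negligible against the curvature $\asymp b_n^2$ exactly because $\lambda_n(p^0)^{1/2}b_n^{-1}\to 0$. Adding the pieces, the curvature dominates the score, the remainder and the penalty, which forces the displayed positivity; the two hypotheses on $b_n$ are precisely what make $b_n$ beat the $n^{-1/2}$ score scale and the $\lambda_n(p^0)^{1/2}$ penalty scale simultaneously.

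The delicate step is the uniform curvature control: showing that $R_n$, which collects both the Taylor remainder of the asymmetric $L_q$ loss and the gap between the empirical and expected second–order terms, is $o_\PP(b_n^2\|\bm{u}\|_2^2)$ uniformly on the sphere, in the regime (A6) where $r\le n$ need not be small relative to $n$ and only $\eE|\varepsilon|^{2q}<\infty$ is available. Here the non–quadratic nature of $\rho_\tau(\cdot;q)$ matters, since the second derivative $h_\tau(\cdot;q)$ carries the factor $|\varepsilon|^{q-2}$; I would therefore lean on the finiteness of $\mu_{h_\tau(q)}$ and $\sigma^2_{g_\tau(q)}$ (guaranteed under (A1q), as recorded for \cite{Hu.Chen.2021}) and on (A2) to bound the relevant moments, and then verify that the resulting fluctuations are $o_\PP(b_n^2)$ under $n^{1/2}b_n\to\infty$. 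This uniform concentration of the curvature, rather than the elementary penalty and score bookkeeping, is where the real work lies.
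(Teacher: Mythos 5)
Your proposal is correct and follows essentially the same route as the paper's proof: test points $\ebo+Bb_n\eu$ on the unit $L_1$-sphere, the same loss-plus-penalty decomposition, the penalty part bounded below by $-B(p^0)^{1/2}\lambda_n b_n$ by dropping the nonnegative terms on ${\cal A}^c$ and using (A5q), the consistency of $\widetilde{\eb}_n(q)$ from Lemma \ref{Lemma3.1_CSDA} and Cauchy--Schwarz on ${\cal A}$, with the two hypotheses $n^{1/2}b_n\rightarrow\infty$ and $\lambda_n(p^0)^{1/2}b_n^{-1}\rightarrow 0$ playing exactly the roles you assign them. The only real difference is that your ``delicate step'' (the uniform curvature/score/remainder control of the loss increment) need not be redone from scratch: the paper obtains it, as relation (\ref{eq10Lq}), by reusing the proof of Lemma \ref{Lemma3.1_CSDA} verbatim with $a_n$ replaced by $b_n$, which is legitimate since $(b_n)$ satisfies precisely the same two conditions as $(a_n)$, and the localization-by-convexity conclusion then follows as you describe.
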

\noindent The convergence rate of $\widehat{\eb}_n(q)$ depends  on the choice of the tuning parameter $\lambda_n$ but also on the number of non-zero groups.
\noindent Now that we know the convergence rates of these two estimators, we can study the oracle properties of $\widehat{\eb}_n(q)$.
\begin{theorem} 
	\label{Theorem3.2_CSDA_L2} Suppose that assumptions   (A1q), (A2), (A3),   (A5q), (A6) hold, the  tuning parameter $(\lambda_n)_{n \in \N}$ and sequence $(b_n)_{n \in \N} \rightarrow 0$  satisfy  $n^{1/2} b_n \rightarrow \infty$, $ (p^0)^{1/2} b^{-1}_n \lambda_n  \rightarrow 0$ and $a_n^{-\gamma} b_n^{-1}\lambda_n \rightarrow \infty$,  as $n \rightarrow \infty$.  Then:\\
	(i)  $\PP \big[\widehat{\cal A}_n = {\cal A}\big]\rightarrow 1$,  as $n \rightarrow \infty $.\\
	(ii)   For any  vector $\eu$ of size $r^0$ such that $\| \eu\|_1=1$,   we have:  $n^{1/2} (\eu^\top \eU^{-1}_{n,{\cal A}} \eu)^{-1/2} \eu^\top ( \widehat{\eb}_n(q) - \eb^0)_{\cal{A}}  \overset{\cal L} {\underset{n \rightarrow \infty}{\longrightarrow}} {\cal N}\big(0,  {\sigma^2_{g_{\tau}(q)}}{\mu^{-2}_{h_{\tau}(q)}} \big)$.
\end{theorem}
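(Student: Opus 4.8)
The plan is to establish the two oracle properties in turn, deriving the sparsity (i) first and then using it to reduce the normality statement (ii) to a central limit theorem on the active coordinates. Throughout I would lean on the two rates already proved, namely $\|\widetilde{\eb}_n(q) - \ebo\|_1 = O_\PP(a_n)$ (Lemma \ref{Lemma3.1_CSDA}) and $\|\widehat{\eb}_n(q) - \ebo\|_1 = O_\PP(b_n)$ (Theorem \ref{Theorem3.1_CSDA}), together with the convexity and smoothness of $\rho_\tau(\cdot;q)$ for $q>1$ and the local quadratic expansion (\ref{eq2Lq}), which identifies $\mu_{h_\tau(q)}$ as the curvature of the expected loss.

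For part (i) I would split $\{\widehat{\mathcal{A}}_n = \mathcal{A}\}$ into the inclusions $\mathcal{A} \subseteq \widehat{\mathcal{A}}_n$ and $\widehat{\mathcal{A}}_n \subseteq \mathcal{A}$. The first is immediate: since $\|\widehat{\eb}_n(q) - \ebo\|_1 = O_\PP(b_n)$ with $b_n \to 0$ while (A5q) forces $\|\eb^0_j\|_2 \ge h_0 > K$ for every $j \le p^0$, each active group estimator $\widehat{\eb}_{n;j}(q)$ stays bounded away from $\mathbf{0}$ with probability tending to $1$. The second inclusion is the crux and I would argue by contradiction through the subgradient (KKT) condition of (\ref{upi}): if $\widehat{\eb}_{n;j}(q) \ne \mathbf{0}$ for some $j > p^0$, then $\|n^{-1}\sum_{i=1}^n g_\tau(Y_i - \mathbb{X}_i^\top\widehat{\eb}_n(q);q)\textbf{X}_{ij}\|_2 = \lambda_n \widehat{\omega}_{n;j}(q)$. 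Expanding $g_\tau(\cdot;q)$ around the true residual $\varepsilon_i$ and using the rate $b_n$ together with (A1q)--(A3) bounds the left-hand side by a term of order $b_n$ plus a mean-zero stochastic term, whereas Lemma \ref{Lemma3.1_CSDA} gives $\|\widetilde{\eb}_{n;j}(q)\|_2 = O_\PP(a_n)$ for $j > p^0$, so the weight is of order at least $a_n^{-\gamma}$ and the right-hand side is of order $\lambda_n a_n^{-\gamma}$. The condition $a_n^{-\gamma} b_n^{-1}\lambda_n \to \infty$ makes the penalty side dominate, a contradiction; hence $\widehat{\eb}_{n;j}(q) = \mathbf{0}$ for all $j > p^0$ with probability tending to $1$.

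For part (ii) I would work on $\{\widehat{\mathcal{A}}_n = \mathcal{A}\}$, which by (i) has probability tending to $1$, so that $(\widehat{\eb}_n(q))_{\mathcal{A}}$ solves the penalized problem restricted to the active coordinates. From the stationarity condition for the groups $j \le p^0$ and a Knight-type expansion of the loss around $(\eb^0)_{\mathcal{A}}$, with curvature $\mu_{h_\tau(q)}\eU_{n,\mathcal{A}}$ supplied by (\ref{eq2Lq}), I would obtain the Bahadur-type representation $(\widehat{\eb}_n(q) - \eb^0)_{\mathcal{A}} = -\mu_{h_\tau(q)}^{-1}\eU_{n,\mathcal{A}}^{-1}\big(n^{-1}\sum_{i=1}^n g_\tau(\varepsilon_i;q)\mathbb{X}_{i,\mathcal{A}} + \lambda_n \mathbf{s}_n + R_n\big)$, where $\mathbf{s}_n$ is the penalty subgradient and $R_n$ the expansion remainder. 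Because the relevant weights satisfy $\widehat{\omega}_{n;j}(q) \to \|\eb^0_j\|_2^{-\gamma} = O(1)$, the condition $(p^0)^{1/2} b_n^{-1}\lambda_n \to 0$ combined with the eigenvalue bounds (A3) and the normalization $\|\eu\|_1 = 1$ forces $n^{1/2}\lambda_n \eu^\top\eU_{n,\mathcal{A}}^{-1}\mathbf{s}_n \to 0$, while the same rate control gives $n^{1/2}\eu^\top\eU_{n,\mathcal{A}}^{-1}R_n = o_\PP(1)$. It then remains to apply the Lindeberg--Feller CLT to the scalar $n^{-1/2}\eu^\top\eU_{n,\mathcal{A}}^{-1}\sum_{i=1}^n g_\tau(\varepsilon_i;q)\mathbb{X}_{i,\mathcal{A}}$, whose variance equals $\sigma^2_{g_\tau(q)}\,\eu^\top\eU_{n,\mathcal{A}}^{-1}\eu$ since $\Var[g_\tau(\varepsilon_i;q)] = \sigma^2_{g_\tau(q)}$ and $\eE[g_\tau(\varepsilon_i;q)] = 0$ under (A1q); normalizing by $(\eu^\top\eU_{n,\mathcal{A}}^{-1}\eu)^{1/2}$ produces the announced limit.

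The main obstacle I anticipate is the uniform control of the quadratic approximation and its remainder $R_n$ in the regime $c \in [1/2,1]$, where $p$ may be of the same order as $n$: one must show that the expected increment (\ref{eq2Lq}) governs the empirical loss uniformly over an $L_1$-ball of radius $b_n$ around $\eb^0$, which calls for a stochastic equicontinuity argument resting on the bounded design (A2), the eigenvalue bounds (A3), the moment condition $\eE|\varepsilon_i|^{2q} < \infty$ of (A1q), and, crucially, on the $L_1$-normalization $\|\eu\|_1 = 1$ (rather than $\|\eu\|_2 = 1$), which is what lets one pass from the growing-dimension error bounds to a scalar statement via $L_1$--$L_\infty$ duality. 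Verifying the Lindeberg condition for the triangular array whose dimension grows with $n$ is the second delicate point, and is where the moment condition in (A1q) is again needed.
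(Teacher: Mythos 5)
Your proposal follows essentially the same route as the paper's proof: part (i) is established exactly as in the paper (the inclusion $\mathcal{A}\subseteq\widehat{\mathcal{A}}_n$ from the rate $b_n$ of Theorem \ref{Theorem3.1_CSDA} together with (A5q), and the reverse inclusion by contradiction through the KKT condition, where the gradient side is $O_\PP(b_n)$ after expanding $g_\tau$ around $\varepsilon_i$ while the penalty side is of order $\lambda_n a_n^{-\gamma}\gg b_n$); and part (ii) matches the paper's argument of restricting to perturbations supported on $\mathcal{A}$, using the quadratic expansion with curvature $\mu_{h_\tau(q)}\eU_{n,\mathcal{A}}$, killing the penalty via $(p^0)^{1/2}\lambda_n b_n^{-1}\to 0$ and the bounded weights on the active set, and concluding with the Lindeberg--Feller CLT applied to $g_\tau(\varepsilon_i;q)\eu^\top\eU_{n,\mathcal{A}}^{-1}\mathbb{X}_{i,\mathcal{A}}$. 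Your "Bahadur representation from the stationarity condition" is the same object as the paper's explicit minimizer (\ref{eq17Lq}) of the dominant quadratic part, so the two write-ups differ only in presentation.
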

 \section{Simulation study}
 \label{Sect_simu}
 In this section, we study by Monte Carlo simulations our adaptive group LASSO expectile estimator and compare it with the adaptive group LASSO quantile estimator, in terms of sparsity and accuracy. 
 All simulations will be performed using R language.  Two scenarios are considered: ungrouped and grouped variables. Moreover, every time, $p$ will be fixed and afterwards varied with $n$. As specified in Section \ref{Sect_expectile}, the value of $\tau$ is fixed and it must satisfy the condition:
  \begin{equation}
 \tau=\frac{\mathbb{E}(\varepsilon \e1_{\varepsilon<0})}{\mathbb{E}(\varepsilon ( \e1_{\varepsilon<0}-\e1_{\varepsilon>0}))}.
 \label{tau}
 \end{equation}
 
 \subsection{Models with ungrouped variables ($r=p$)}
 In this subsection, the linear models are with ungrouped variables, that is $d_j=1$ for any $j=1, \cdots p$. The used R language packages   are \textit{SALES} with function \textit{ernet} for expectile regression and \textit{quantreg} with function \textit{rq} for quantile regression. Based on relation (\ref{tau}), all simulations will be preceded by an estimation of $\tau$, depending on the distribution of the model error $\varepsilon$. 
 \subsubsection{Fixed $p$ case}
 \label{cas_ok2}
 \textbf{Parameters choice.} 
 Taking  into account (\ref{E3.1})    we consider:
 $
 \lambda_n=n^{ -1/2-  \gamma/4}$, with  $\gamma \in (0,1)$. 
 We choose $p^0=5,\mathcal{A}=\{1,...,5\}$ and: $
 \bm{\beta}_1^0=1$, $\bm{\beta}_2^0=-2$, $ \bm{\beta}_3^0=0.5$, $ \bm{\beta}_4^0=4$,  $\bm{\beta}_5^0=-6,$,  $\bm{\beta}_j^0=0$ for all $j > p^0. $
 For the model errors $\varepsilon$,  three distributions are considered: $\mathcal{N}(0,1)$ which is symmetrical, $\mathcal{E}xp(-1)$ and $\mathcal{N}(1.2,0.4^2)+\chi^2(1)$, the last two being asymmetrical.
 The  explanatory variables   are of normal standard distribution. By  $M=1000$ Monte Carlo replications,  the adaptive LASSO expectile estimator (\textit{ag}\_$\mathcal{E}$) is compared with the adaptive LASSO quantile estimator (\textit{ag}\_$\mathcal{Q}$). 
 For  \textit{ag}\_$\mathcal{Q}$,  the tuning parameter is of order  $n^{-3/5}$ and the value of the power in the weight of the penalty is 1.225 (see \cite{Ciuperca.2021}). 
 \newline
 \textbf{Results.}
 Looking at the sparsity property,   two cardinalities are calculated: 
  $Card(\mathcal{A} \cap \widehat{\mathcal{A}}_n)$ which is the number of true non-zeros estimated as non-zero,   $Card(\widehat{\mathcal{A}}_n \backslash \mathcal{A})$ which is the number of false non-zeros.
 Note that for a perfect estimation method we should get $Card(\mathcal{A} \cap \widehat{\mathcal{A}}_n)=p^0=5$ and $Card(\widehat{\mathcal{A}}_n \backslash \mathcal{A})=0$. Table \ref{Table1} presents these two cardinalities.
 Looking at $Card(\widehat{\mathcal{A}}_n \backslash \mathcal{A})$,   \textit{ag}\_$\mathcal{E}$ shows better performance than   \textit{ag}\_$\mathcal{Q}$, especially when $p =O(n)$. Note also that the accuracy of the evaluation of $Card(\widehat{\mathcal{A}}_n \backslash \mathcal{A})$ rises with $\gamma$ and concerning  $Card(\mathcal{A} \cap \widehat{\mathcal{A}}_n)$,  \textit{ag}\_$\mathcal{Q}$ is more accurate when $p \ll n$, even if the results for  \textit{ag}\_$\mathcal{E}$ are very close. However, when $p$ is greater and the error distribution asymmetrical,   \textit{ag}\_$\mathcal{E}$ provide better estimates. Notice also that the number of the true non-zeros estimated as non-zero decrease when $\gamma$ increase.
 \subsubsection{Case when $p$ is depending on {n}}
 \label{sect_pn}
 \textbf{Parameters choice.}   
The numbers $p$ and $p^0$ are calibrated in two ways: firstly 
$p=\lfloor  n/2 \rfloor $, $p^0=2 \lfloor n^{1/2} \rfloor $ and then $c=1$ , afterwards 
 $p=\lfloor  n(\log{n})^{-1} \rfloor $,   $p^0=2 \lfloor n^{1/4} \rfloor $ and then $c<1$. 
 For all $ j=1, \cdots,  p^0$,   $\bm{\beta}_j^0  \sim \mathcal{N}(0,2)$.
 The design, model errors are similar to those when $p$ is fixed.  
 \newline
 \textbf{Results.}  
 The sparsity is studied by calculating:  $(100/{p^0}) Card(\mathcal{A} \cap \widehat{\mathcal{A}}_n)$ which is the percentage of the   true non-zero parameters  estimated as non-zero, 
  $(100/(p-p^0)) Card(\widehat{\mathcal{A}}_n \backslash \mathcal{A})$ which is the percentage of the number of false non-zeros.
 For a perfect estimation method we should get $(100 /{p^0}) Card(\mathcal{A} \cap \widehat{\mathcal{A}}_n)=100$ and $(100/(p-p^0)) Card(\widehat{\mathcal{A}}_n \backslash \mathcal{A})=0$.  Denoting by $\widehat{\bm{\beta}}^{(m)}_{n} $ the estimation obtained for the $m$th Monte Carlo replication, we also calculate:  \textit{mean}($| \widehat{\bm{\beta}}_n - \bm{\beta}^0| ) = (Mp)^{-1} \sum_{m=1}^M \sum_{j=1}^p $ $\| \widehat{\bm{\beta}}^{(m)}_{n;j} - \bm{\beta}_j^0 \|_1 $ the accuracy of the complete estimation vectors and \textit{mean}($| (\widehat{\bm{\beta}}_n - \bm{\beta}^0)_{\mathcal{A}}| $)= $(Mp)^{-1} \sum_{m=1}^M \sum_{j=1}^{p^0} \| \widehat{\bm{\beta}}^{(m)}_{n;j} - \bm{\beta}_j^0 \|_1$ the accuracy of the estimation of the non-zero parameters.
 Remark that in Table \ref{Table2}, we are in the case where $p = O(n) $.
 Looking at the sparsity property,  \textit{ag}\_$\mathcal{Q}$ is slightly better for symmetrical error ($\mathcal{N}(0,1)$) but   \textit{ag}\_$\mathcal{E}$ is better when the distributions are asymmetrical. When $n$ is greater, there is no significant difference between the two methods. We can notice a connection between the accuracy and the sparsity property. Precisely, the more an estimator can select variable efficiently, the less it is accurate on the estimation of the non-zeros parameters.   \textit{ag}\_$\mathcal{E}$ tends to be more precise for smaller value of $n$.  Another case is presented in Table \ref{Table3}. Here,   \textit{ag}\_$\mathcal{E}$ always has a better sparsity property and   \textit{ag}\_$\mathcal{Q}$ is always more precise.
  \begin{table}[h!]
   	\begin{center}
  {\tiny  	
 	\begin{tabular}{ccc|cc|c|ccc|c|}  		\hline 
 		$\varepsilon$ & n & p & \multicolumn{3}{c|}{$Card(\mathcal{A} \cap \widehat{\mathcal{A}}_n)$}& \quad & \multicolumn{3}{c|}{$Card(\widehat{\mathcal{A}}_n \backslash \mathcal{A})$} \\ \hline 
 		\quad & \quad & \quad  & \multicolumn{2}{c|}{\textit{ag}\_$\mathcal{E}$} & \textit{ag}\_$\mathcal{Q}$ & \quad & \multicolumn{2}{c|}{\textit{ag}\_$\mathcal{E}$} & \textit{ag}\_$\mathcal{Q}$ \\ 
 		\quad & \quad & \quad  & $\gamma=5/8$ & $\gamma=11/12$ & \quad & \quad & $\gamma=5/8$ & $\gamma=11/12$ & \quad \\ \hline 
 
 		 $\mathcal{N}(0,1)$ &  50
 		& 10 & 4.999 & 4.995 & 5 & \quad & 0.007 & 0.001 & 0.221\\    
 		&& 25 & 4.998 & 4.989 & 4.999 & \quad & 0.034 & 0.001 & 1.326\\
 		&& 50 & 4.91 & 4.85 & 4.897 & \quad & 1.596 & 2.668 & 15.97\\ \cline{2-10}
 		 		
 		& 100
 		& 10 & 5 & 5 & 5 & \quad & 0 & 0 & 0.127\\
 		&& 25 & 5 & 5 & 5 & \quad & 0.002 & 0 & 0.573\\    
 		&& 100 & 4.973 & 4.901 & 4.912 & \quad & 1.438 & 2.937 & 49.28\\ \cline{2-10}
 				
 		& 200
 		& 10 & 5 & 5 & 5 & \quad & 0 & 0 & 0.048\\
 		&& 100 & 5 & 5 & 5 & \quad & 0 & 0 & 1.764\\    
 		&& 200 & 4.986 & 4.96 & 4.949 & \quad & 0.333 & 0.557 & 109.4\\ 	\hline 
 	 $\mathcal{N}(-1.2,0.4^2) +\chi^2(1)$ &  50
 		& 10 & 4.984 & 4.972 & 5 & \quad & 0.043 & 0.019 & 0.133\\ 
 	 && 25 & 4.97 & 4.958 & 4.994 & \quad & 0.284 & 0.194 & 1.242\\
 		&& 50 & 4.87 & 4.762 & 4.831 & \quad & 5.984 & 8.134 & 24.73\\ \cline{2-10}
 	&  100
 		& 10 & 5 & 5 & 5 & \quad & 0.005 & 0 & 0.035\\
 		&& 25 & 5 & 4.997 & 5 & \quad & 0.089 & 0.039 & 1.452\\    
 		&& 100 & 4.924 & 4.87 & 4.892 & \quad & 5.927 & 8.723 & 57.23\\ \cline{2-10}
 	&  200
 		& 10 & 5 & 5 & 5 & \quad & 0 & 0 & 0.013\\
 		&& 100 & 5 & 5 & 5 & \quad & 0.01 & 0 & 1.337\\    
 		&& 200 & 4.961 & 4.929 & 4.938 & \quad & 0.861 & 1.54 & 122.2\\ \hline
 	 		
 	 $\mathcal{E}xp(-1)$ & 50
 		& 10 & 4.999 & 4.993 & 5 & \quad & 0 & 0 & 0.033\\    
 		&& 25 & 4.995 & 4.986 & 4.999 & \quad & 0.008 & 0.005 & 0.558\\
 		&& 50 & 4.906 & 4.822 & 4.871 & \quad & 2.225 & 4.033 & 21\\ \cline{2-10}
  		
 		& 100
 		& 10   & 5     & 5     & 5     & \quad & 0     & 0     & 0.005\\
 		&& 25  & 5     & 5     & 5     & \quad & 0     & 0     & 0.073\\    
 		&& 100 & 4.945 & 4.902 & 4.914 & \quad & 0.883 & 1.619 & 48.48\\ \cline{2-10}
 		 		
 		&  200 & 10 & 5 & 5 & 5 & \quad & 0 & 0 & 0.001\\
 		&& 100 & 5 & 5 & 5 & \quad & 0 & 0 & 0.419\\ 
 		&& 200 & 4.979 & 4.971 & 4.939 & \quad & 0.12 & 0.21 & 106.3 \\ 	\hline
  	\end{tabular}
  	\caption{\small Sparsity study of the adaptive LASSO expectile estimator (\textit{ag}\_$\mathcal{E}$) and of the adaptive LASSO quantile estimator (\textit{ag}\_$\mathcal{Q}$) when explanatory variables are ungrouped and $p^0=5$.}
  		\label{Table1}
}
\end{center}
  \end{table}
   
 \begin{table}[h!]
 \begin{center}
{\tiny  
 	\begin{tabular}{cc|cc|c|cc|c|cc|c|cc|c|}
 		\hline 
 		\quad & \quad  & \multicolumn{3}{|c|}{100 ${p^0}^{-1} Card(\mathcal{A} \cap \widehat{\mathcal{A}}_n)$}&  \multicolumn{3}{c|}{100$(p-p^0)^{-1} Card(\widehat{\mathcal{A}}_n \backslash \mathcal{A})$} &  \multicolumn{3}{c|}{mean($| \widehat{\bm{\beta}}_n - \bm{\beta}^0|$ )} &  \multicolumn{3}{c|}{mean($| (\widehat{\bm{\beta}}_n - \bm{\beta}^0)_{\mathcal{A}}| $)} \\   
 		 
 		  &  & \multicolumn{2}{c|}{\textit{ag}\_$\mathcal{E}$} & \textit{ag}\_$\mathcal{Q}$ & \multicolumn{2}{c|}{\textit{ag}\_$\mathcal{E}$} & \textit{ag}\_$\mathcal{Q}$ & \multicolumn{2}{c|}{\textit{ag}\_$\mathcal{E}$} & \textit{ag}\_$\mathcal{Q}$ & \multicolumn{2}{c|}{\textit{ag}\_$\mathcal{E}$} & \textit{ag}\_$\mathcal{Q}$  \\  
 		 		 		
 		$\varepsilon$ & n & $\gamma=\frac{5}{8} $ & $\gamma=\frac{11}{12}$ & \quad &$\gamma=\frac{5}{8} $ & $\gamma=\frac{11}{12}$  & \quad &$\gamma=\frac{5}{8} $ & $\gamma=\frac{11}{12}$  & \quad & $\gamma=\frac{5}{8} $ & $\gamma=\frac{11}{12}$  & \quad  \\ \hline 
 		
 		\multirow{1}{1.5cm}{$\mathcal{N}(0,1)$} & 50 &  100 & 100 & 98.21 &0.154 &0.12 &0.009 &0.196 &0.223 &0.347 &0.350 &0.399 & 0.619 	\\    
 		& 100 &  100 & 99.99 & 100 &0 &0 &0 &0.161 &0.15 &0.109 &0.402 &0.375 &0.289 \\
 		& 400 &  99.99 & 98 &100 &0 &0 &0 &0.086 &0.077 &0.031 &0.43 &0.384 &0.154 \\ 	\hline 
 		
 		\multirow{1}{1.5cm}{$\mathcal{N}(-1.2,0.4^2) \newline +\chi^2(1)$} & 50 &  100 & 100 &93.05 &1.03 &0.509 &0.045 &0.215 &0.196 &0.595 &0.364 &0.384 & 1.063 
 		\\    
 		& 100 &  100 & 100 &99.5 &0.003 &0 &0 &0.166 &0.149 &0.196 &0.414 &0.37 &0.491 \\
 		& 400 &  100 & 100 &99.57 &0 &0 &0 &0.077 &0.083 &0.079 &0.385 &0.414 &0.393 \\ \hline
 		 		
 		\multirow{1}{1.5cm}{$\mathcal{E}xp(-1)$} & 50 &  100 & 100 &96.01 &0.045 &0.009 &0 &0.237 &0.217 &0.456 &0.424 &0.387 &0.814 
 		\\    
 		& 100 &  100 & 100 &99.96 &0 &0 &0 &0.17 &0.153 &0.094 &0.425 &0.382 &0.235 \\
 		& 400 &  100 & 98.9 &100 &0 &0 &0 &0.078 &0.073 &0.022 &0.389 &0.366 &0.109 \\	\hline
 	 	\end{tabular}
  	\caption{\small Sparsity study of   \textit{ag}\_$\mathcal{E}$ and of  \textit{ag}\_$\mathcal{Q}$ when explanatory variables are ungrouped,    $p=\lfloor  n/2 \rfloor $, $p^0=2 \lfloor \sqrt{n} \rfloor $.}
  	  	\label{Table2}
}
\end{center}
 \end{table}

 \begin{table}[h!]
\begin{center}
 {\tiny   
 	\begin{tabular}{cc|cc|c|cc|c|cc|c|cc|c|}
 	\hline 
 	\quad & \quad  & \multicolumn{3}{|c|}{100 ${p^0}^{-1} Card(\mathcal{A} \cap \widehat{\mathcal{A}}_n)$}&  \multicolumn{3}{c|}{100$(p-p^0)^{-1} Card(\widehat{\mathcal{A}}_n \backslash \mathcal{A})$} &  \multicolumn{3}{c|}{mean($| \widehat{\bm{\beta}}_n - \bm{\beta}^0|$ )} &  \multicolumn{3}{c|}{mean($| (\widehat{\bm{\beta}}_n - \bm{\beta}^0)_{\mathcal{A}}| $)} \\   
 	
 	&  & \multicolumn{2}{c|}{\textit{ag}\_$\mathcal{E}$} & \textit{ag}\_$\mathcal{Q}$ & \multicolumn{2}{c|}{\textit{ag}\_$\mathcal{E}$} & \textit{ag}\_$\mathcal{Q}$ & \multicolumn{2}{c|}{\textit{ag}\_$\mathcal{E}$} & \textit{ag}\_$\mathcal{Q}$ & \multicolumn{2}{c|}{\textit{ag}\_$\mathcal{E}$} & \textit{ag}\_$\mathcal{Q}$  \\  
 		
 	$\varepsilon$ & n & $\gamma=\frac{5}{8} $ & $\gamma=\frac{11}{12}$ & \quad &$\gamma=\frac{5}{8} $ & $\gamma=\frac{11}{12}$  & \quad &$\gamma=\frac{5}{8} $ & $\gamma=\frac{11}{12}$  & \quad & $\gamma=\frac{5}{8} $ & $\gamma=\frac{11}{12}$  & \quad  \\ \hline 
 		\multirow{1}{1.5cm}{$\mathcal{N}(0,1)$} & 50 &  100 & 100 & 100 &0.1 &0.013 &0.062 &0.139 &0.140 &0.073 & 0.416 &0.420 & 0.221
 		\\    
 		& 100 &  100 & 100 & 100 &0 &0 &0 & 0.115 &0.11 &0.049 &0.403 &0.383 &0.167 \\
 		& 400 &  100 & 99.3 &100 &0 &0 &0 &0.048 &0.043 &0.011 & 0.393 &0.354 &0.012 \\ \hline 
 		\multirow{1}{1.5cm}{$\mathcal{N}(-1.2,0.4^2) \newline +\chi^2(1)$} & 50 &  100 & 100 & 100 & 0.99 &0.425 &0.025 &0.137 &0.123 &0.074 &0.41 &0.37 & 0.222
 		\\    
 		& 100 &  100 & 100 & 100 &0
 		&0 &0 &0.113 &0.108 &0.04 &0.394 &0.38 &0.141 \\
 		& 400 &  100 & 100 & 100 &0 &0 &0 &0.049 &0.048 &0.008 &0.404 &0.4 &0.068 \\ 	\hline
 		\multirow{1}{1.5cm}{$\mathcal{E}xp(-1)$} & 50 &  100 & 100 & 100 &0.037 &0.05 &0 &0.124 &0.109 &0.049 &0.373 &0.328 &0.147
 		\\    
 		& 100 &  100 & 100 & 100 &0 &0 &0 &0.112 &0.105 &0.033 &0.391 &0.366 &0.115 \\
 		& 400 &  100 & 100 &100 &0 &0 &0 &0.047 &0.053 &0.007 &0.386 &0.438 &0.048 \\ 	\hline
  	\end{tabular}
  \caption{\small Sparsity study  of the adaptive LASSO expectile estimator (\textit{ag}\_$\mathcal{E}$) and of the adaptive LASSO quantile estimator (\textit{ag}\_$\mathcal{Q}$) when explanatory variables are ungrouped,    $p=\lfloor  n(\log{n})^{-1} \rfloor $, $p^0=2 \lfloor n^{1/4} \rfloor $.}
   	\label{Table3}
}
\end{center}
  \end{table}
 
 
 \subsubsection{Sparsity study depending on $\gamma$}
 \textbf{Parameters choice.}  
 We take $p \in \{10,100\}$ and $n=100$.
 Parameter $\bm{\beta}^0$ and the model errors $\varepsilon$ are similar as in the case of fixed  $p$.\\
  \noindent \textbf{Results.}  
 Results are presented for  \textit{ag}\_$\mathcal{E}$. In the case of symmetrical distribution of the error, Figures \ref{F1} and \ref{F2} show that the number of true non-zeros estimated as non-zero and the number of false non-zeros   decrease function of $\gamma$. Consequently, the choice of $\gamma$ will depend on the context. However, if $p \ll n$, taking $\gamma  $ close to 1 will always be the best option. 
\noindent In the case of asymmetrical distribution of the error, $Card(\mathcal{A} \cap \widehat{\mathcal{A}}_n)$ is still a decreasing function of $\gamma$ while  $Card(\widehat{\mathcal{A}}_n \backslash \mathcal{A})$ has a minimum value  (Figure \ref{F6}).  
 It would be interesting to choose $\gamma$ near this minimum. 
 We can still notice that, when $p \ll n$, the influence of  $\gamma$ is mainly on $Card(\widehat{\mathcal{A}}_n \backslash \mathcal{A})$ (Figure \ref{F5}).
 Finally,    $\gamma \simeq 0.6 $ is a good choice when $p=O(n)$ and for
 $p \ll n$ the choice of $\gamma  $ close to 1  is favorable.
 \begin{figure}[h!]
 	\begin{tabular}{cc}
 		\includegraphics[width=0.45\linewidth,height=3.5cm]{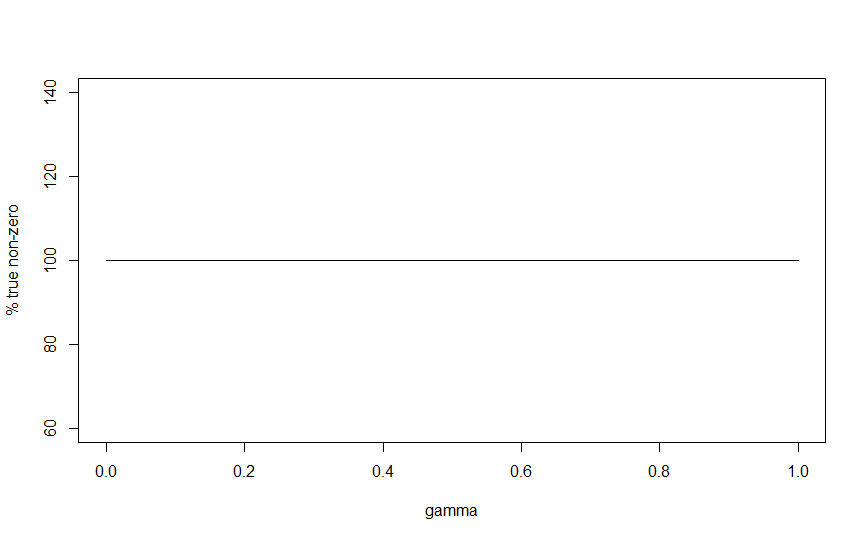} &
 		\includegraphics[width=0.45\linewidth,height=3.5cm]{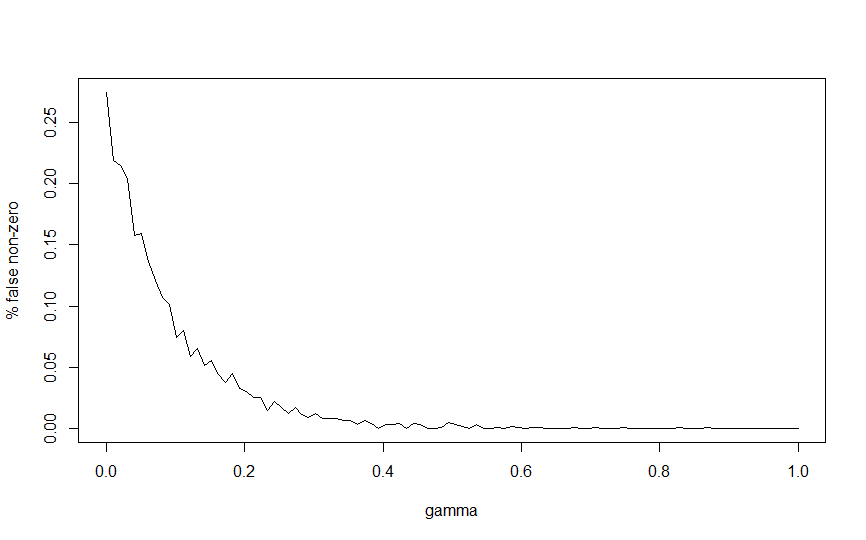} \\
 	{\small 	$(100 /{p^0})  Card(\mathcal{A} \cap \widehat{\mathcal{A}}_n)$} &
 	{\small 	$(100/(p-p^0) ) Card(\widehat{\mathcal{A}}_n \backslash \mathcal{A})$ }
 	\end{tabular}
 	\caption{\small Sparsity study of   \textit{ag}\_$\mathcal{E}$ depending on $\gamma$ when $\varepsilon \sim \mathcal{N}(0,1)$, $ p=10$, $n=100$.	}
 	\label{F1}
 \end{figure}
 \begin{figure}[h!]
 	\begin{tabular}{cc}
 		\includegraphics[width=0.45\linewidth,height=3.5cm]{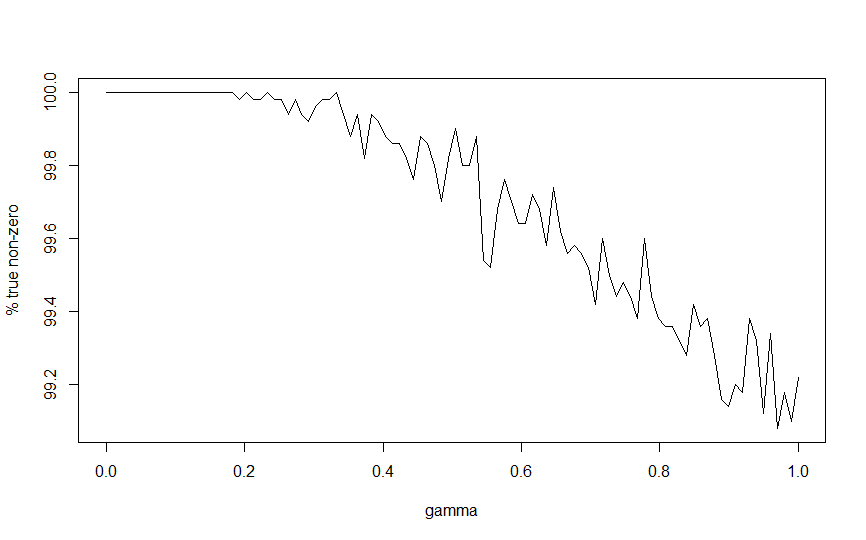} &
 		\includegraphics[width=0.45\linewidth,height=3.5cm]{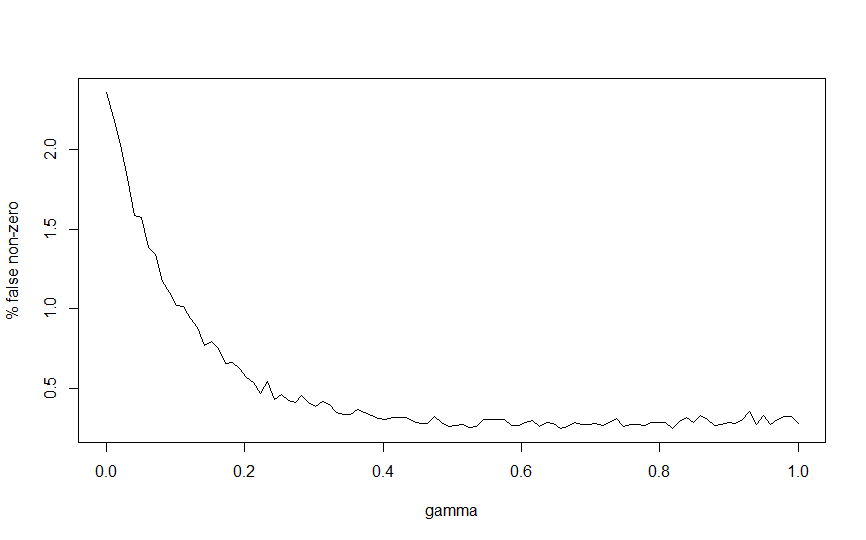} \\
 	{\small	$(100 / {p^0}) Card(\mathcal{A} \cap \widehat{\mathcal{A}}_n)$ }&
 	{\small	$(100/(p-p^0)) Card(\widehat{\mathcal{A}}_n \backslash \mathcal{A})$ } 
 	\end{tabular}
 	\caption{\small Sparsity study of   \textit{ag}\_$\mathcal{E}$  depending on $\gamma$ when $\varepsilon \sim \mathcal{N}(0,1)$, $ p=100$, $n=100$. }
 	\label{F2}
 \end{figure}
 \begin{figure}[h!]
 	\begin{tabular}{cc}
 		\includegraphics[width=0.45\linewidth,height=3.5cm]{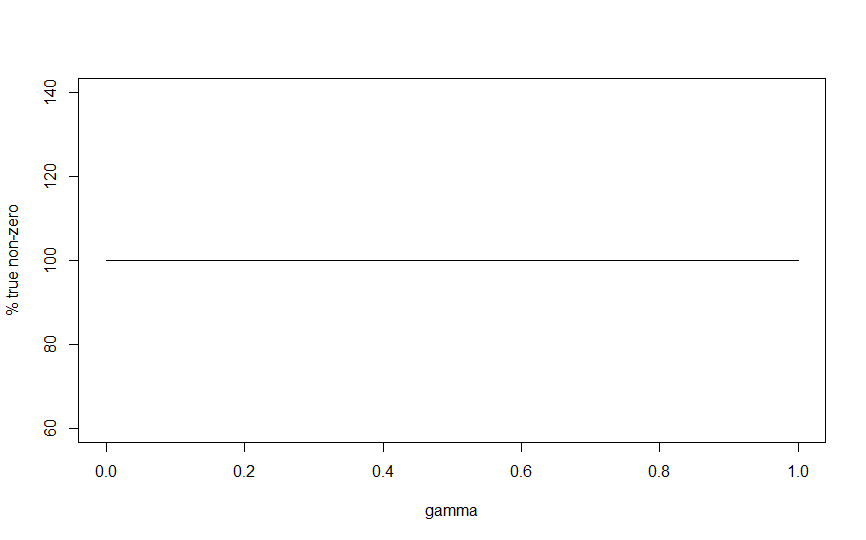} &
 		\includegraphics[width=0.45\linewidth,height=3.5cm]{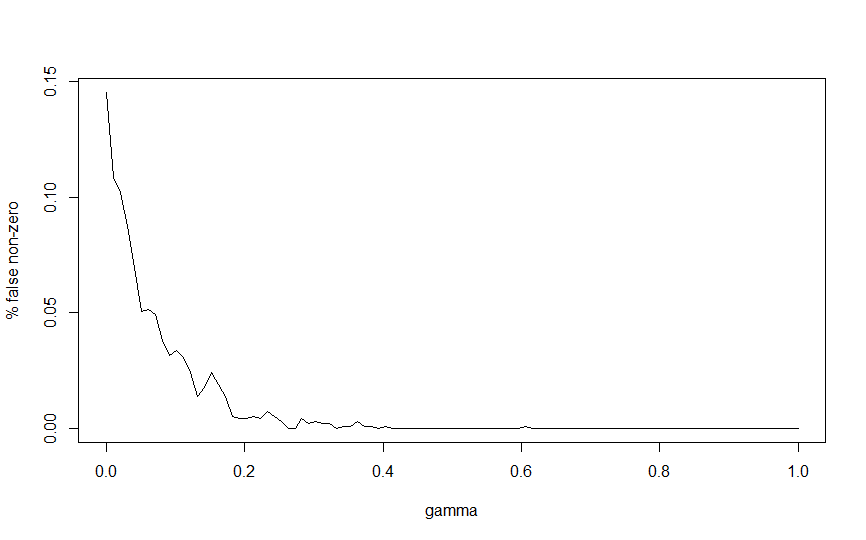} \\
 	{\small	$(100/ {p^0} ) Card(\mathcal{A} \cap \widehat{\mathcal{A}}_n)$} &
 	{\small	$(100/(p-p^0) ) Card(\widehat{\mathcal{A}}_n \backslash \mathcal{A})$ }
 	\end{tabular}
 	\caption{\small Sparsity study of   \textit{ag}\_$\mathcal{E}$ depending on $\gamma$ when $\varepsilon \sim \mathcal{E}xp(-1)$, $ p=10$, $n=100$.	}
 	\label{F5}
 \end{figure}
 \begin{figure}[h!]
 	\begin{tabular}{cc}
 		\includegraphics[width=0.45\linewidth,height=3.5cm]{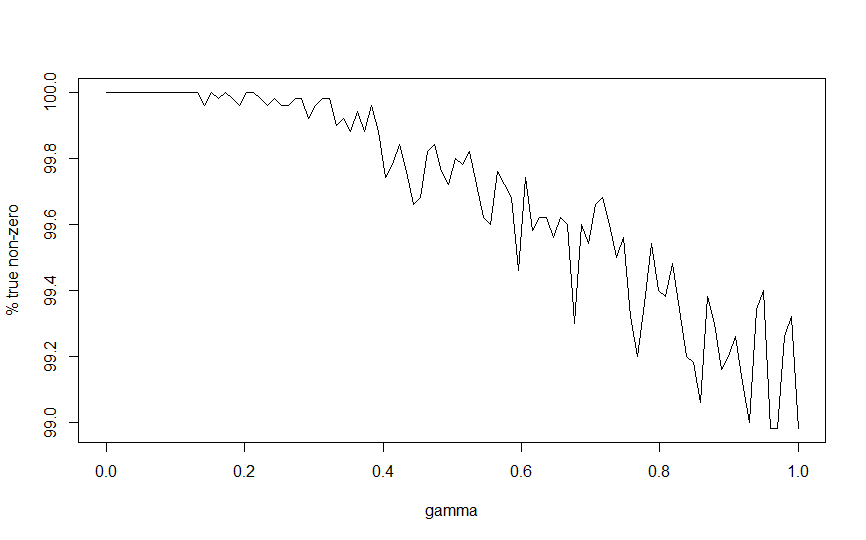} &
 		\includegraphics[width=0.45\linewidth,height=3.5cm]{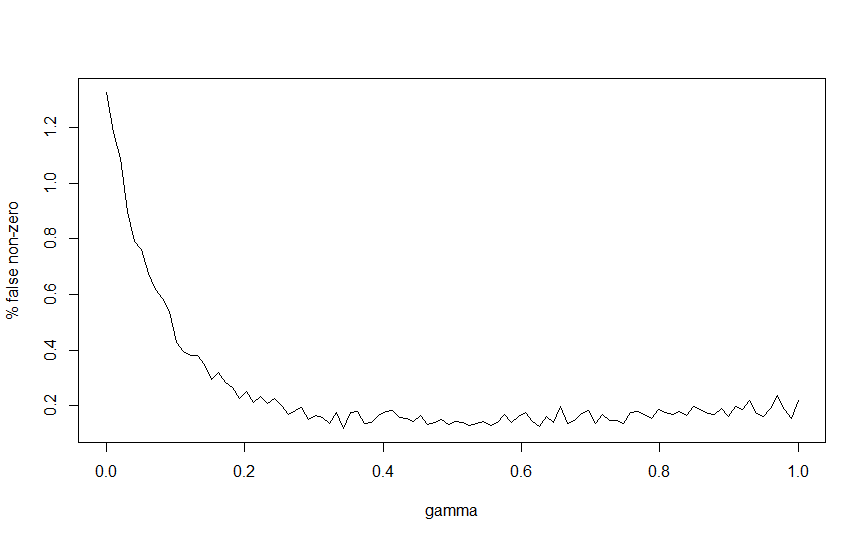} \\
 	{\small 	$(100 /{p^0}) Card(\mathcal{A} \cap \mathcal{A}_n)$ }&
 	{\small 	$(100/(p-p^0)) Card(\widehat{\mathcal{A}}_n \backslash \mathcal{A})$ }
 	\end{tabular}
 	\caption{\small Sparsity study of   \textit{ag}\_$\mathcal{E}$ depending on $\gamma$ when $\varepsilon \sim \mathcal{E}xp(-1)$, $ p=100$, $n=100$. 	}
 	\label{F6}
 \end{figure}
 \subsubsection{Effect of $\| \beta^0 \|_2$}
 Now quantify the effect of $\| \bm{\beta}^0 \|_2 $ on the  sparsity of \textit{ag}\_$\mathcal{E}$. \newline
 \textbf{Parameters choice.} 
 We take $\gamma=0.6$, $p=100$, $n=100$, while   the errors $\varepsilon$ are  similar as in the case of fixed  $p$. Concerning the model parameters, we choose: $  \bm{\beta}_1^0=v \cdot 10^{-2}$, $\bm{\beta}_j^0=0$  for all $ j>1$, 
 with $v>0$ which will be varied. 
 \newline
 \textbf{Results.}   For the true non-zeros, from Figures \ref{F7}, \ref{F8}, \ref{F9} and \ref{F10} we deduce     the value of $v$   for obtaining  a satisfactory  selection   as $p \simeq n $ and as the distribution of the error becomes asymmetrical. The effect on 100$(p-p^0)^{-1} Card(\widehat{\mathcal{A}}_n \backslash \mathcal{A})$ isn't relevant even if, when $p =O( n)$, this value seems to have a maximum for a symmetrical distribution of error.   Table \ref{Table4} presents the value of $\| \bm{\beta}^0 \|_2$ for which different values of $(100 /{p^0}) Card(\mathcal{A} \cap \widehat{\mathcal{A}}_n)$ are achieved. More precisely
 $_{99}\| \bm{\beta}^0 \|_2$ is the value of $\bm{\beta}^0$ for which $(100/p_0) Card(\mathcal{A} \cap \widehat{\mathcal{A}}_n)=99\% $ and  $_{95}\| \bm{\beta}^0 \|_2$ the value for which $(100/p_0) Card(\mathcal{A} \cap \widehat{\mathcal{A}}_n)=95\% $.
 \begin{figure}[h!]
 	\begin{tabular}{cc}
 		\includegraphics[width=0.45\linewidth,height=3.5cm]{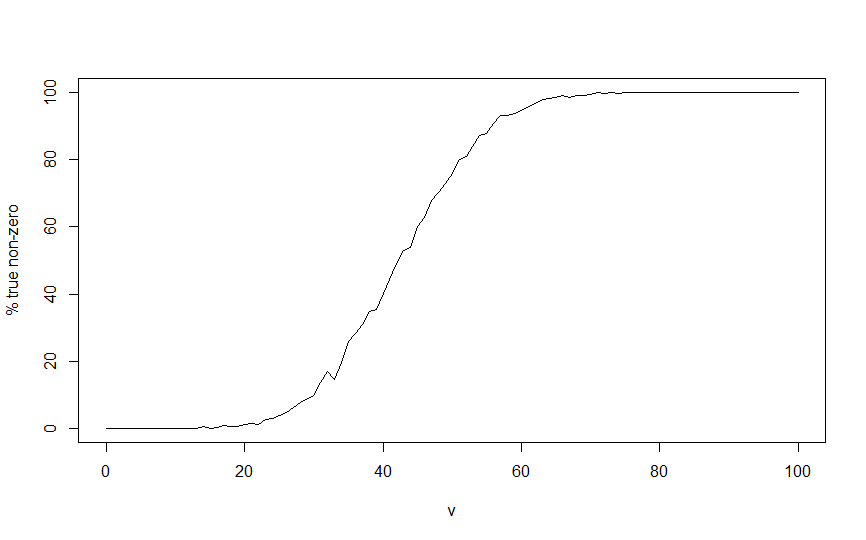} &
 		\includegraphics[width=0.45\linewidth,height=3.5cm]{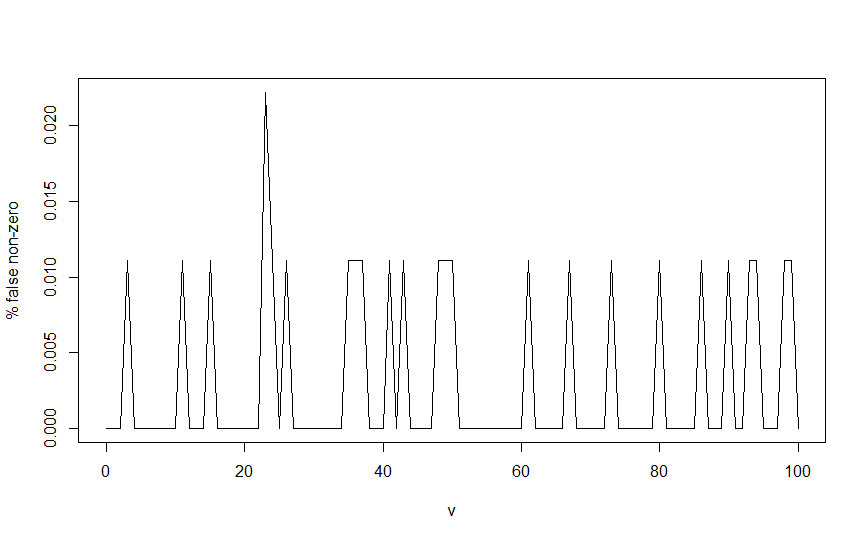} \\
 	{\small 	$(100/ {p^0} ){\normalsize } Card(\mathcal{A} \cap \widehat{\mathcal{A}}_n)$ }&
 	{\small 	$(100/(p-p^0)) Card(\widehat{\mathcal{A}}_n \backslash \mathcal{A})$ }
 	\end{tabular}
 	\caption{\small Sparsity study of  \textit{ag}\_$\mathcal{E}$ depending on $v$ when $\varepsilon \sim \mathcal{N}(0,1)$, $ p=10$, $n=100$. 	}
 	\label{F7}
 \end{figure}

 \begin{figure}[h!]
 	\begin{tabular}{cc}
 		\includegraphics[width=0.45\linewidth,height=3.5cm]{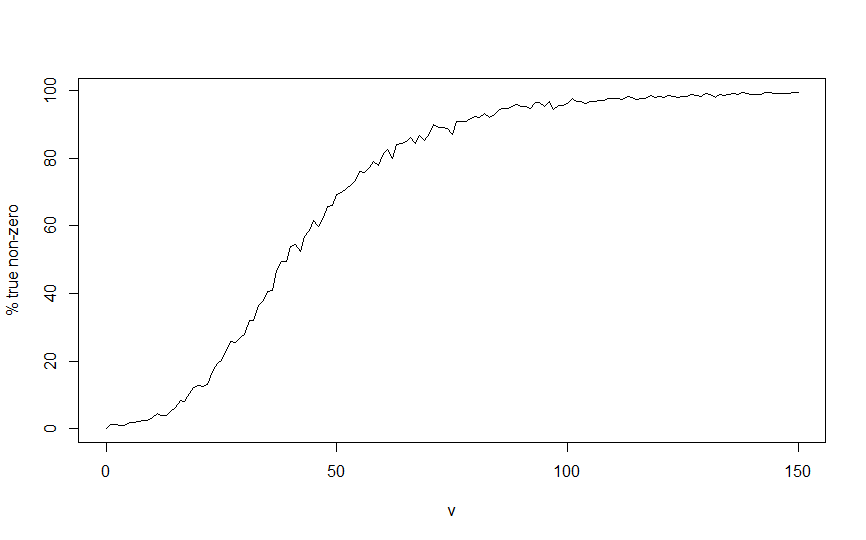} &
 		\includegraphics[width=0.45\linewidth,height=3.5cm]{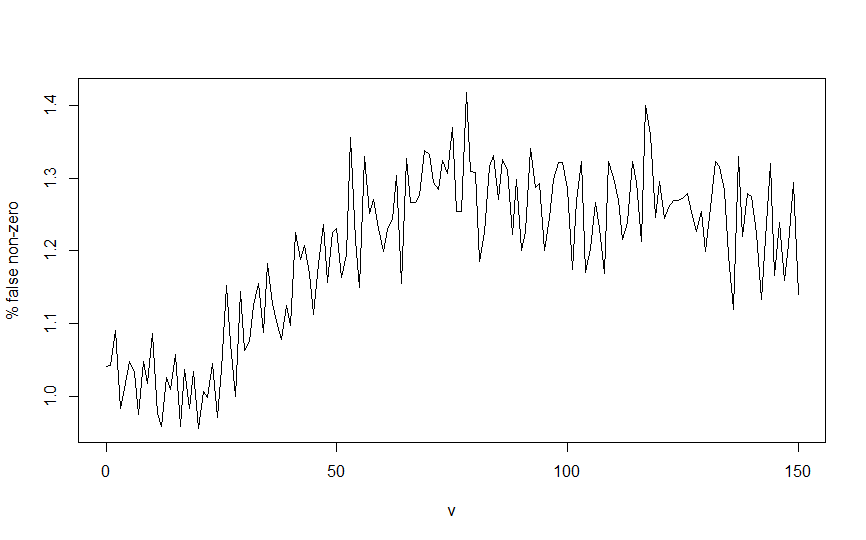} \\
 	{\small	$(100 /p_0) Card(\mathcal{A} \cap \widehat{\mathcal{A}}_n)$} &
 	{\small	$(100/(p-p_0)) Card(\widehat{\mathcal{A}}_n \backslash \mathcal{A})$ }
 	\end{tabular}
 	\caption{\small Sparsity study of \textit{ag}\_$\mathcal{E}$ depending on $v$ when $\varepsilon \sim \mathcal{N}(0,1)$, $ p=100$, $n=100$. }
 	\label{F8}
 \end{figure}

 \begin{figure}[h!]
 	\begin{tabular}{cc}
 		\includegraphics[width=0.45\linewidth,height=3.5cm]{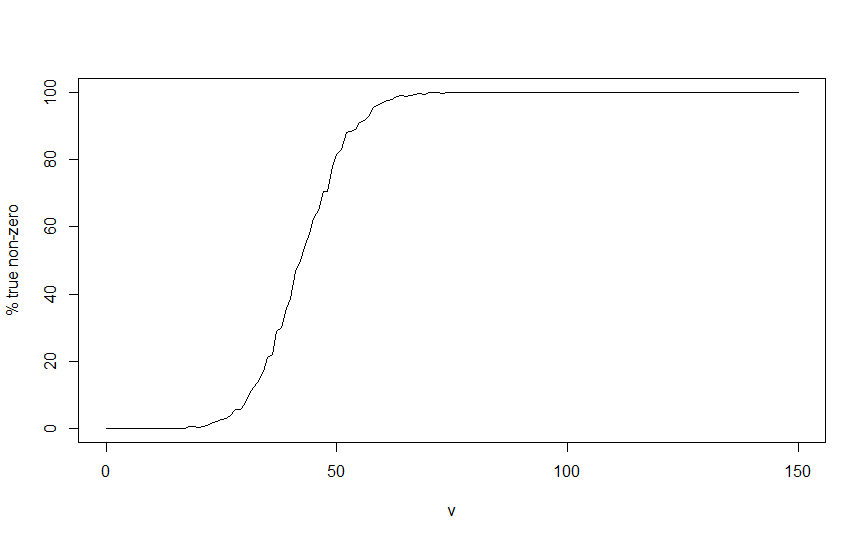} &
 		\includegraphics[width=0.45\linewidth,height=3.5cm]{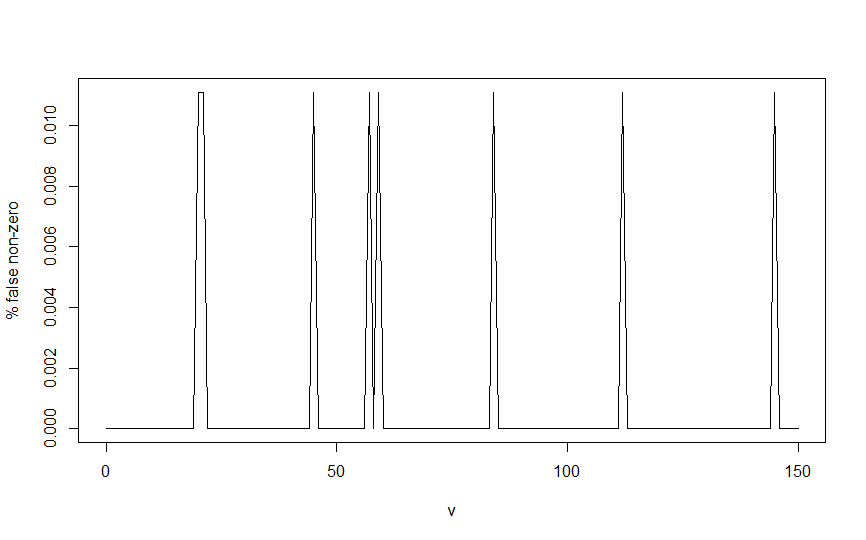} \\
 	{\small	$(100 /p_0) Card(\mathcal{A} \cap \widehat{\mathcal{A}}_n)$} &
 	{\small	$(100/(p-p_0)) Card(\widehat{\mathcal{A}}_n \backslash \mathcal{A})$ }
 	\end{tabular}
 	\caption{\small Sparsity study of   \textit{ag}\_$\mathcal{E}$ depending on $v$ when $\varepsilon \sim \mathcal{E}xp(-1)$, $ p=10$, $n=100$.	}
 	\label{F9}
 \end{figure}

 \begin{figure}[h!]
 	\begin{tabular}{cc}
 		\includegraphics[width=0.45\linewidth,height=3.5cm]{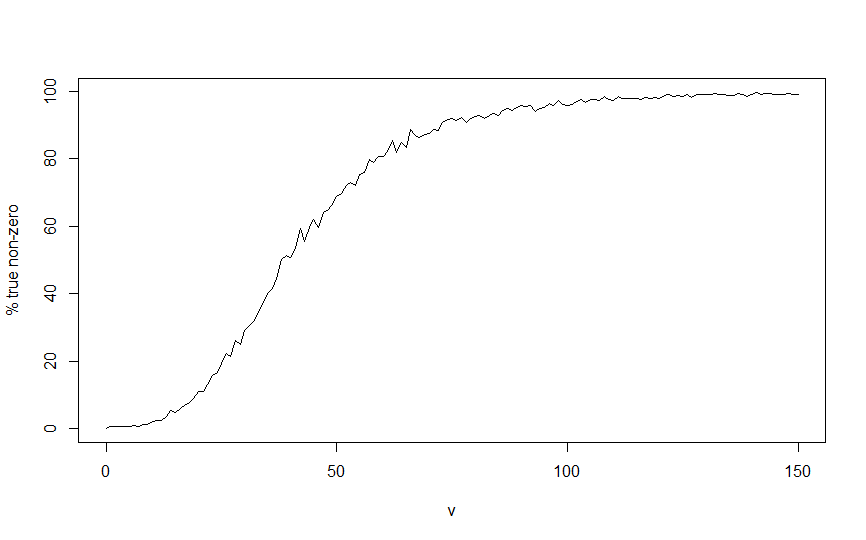} &
 		\includegraphics[width=0.45\linewidth,height=3.5cm]{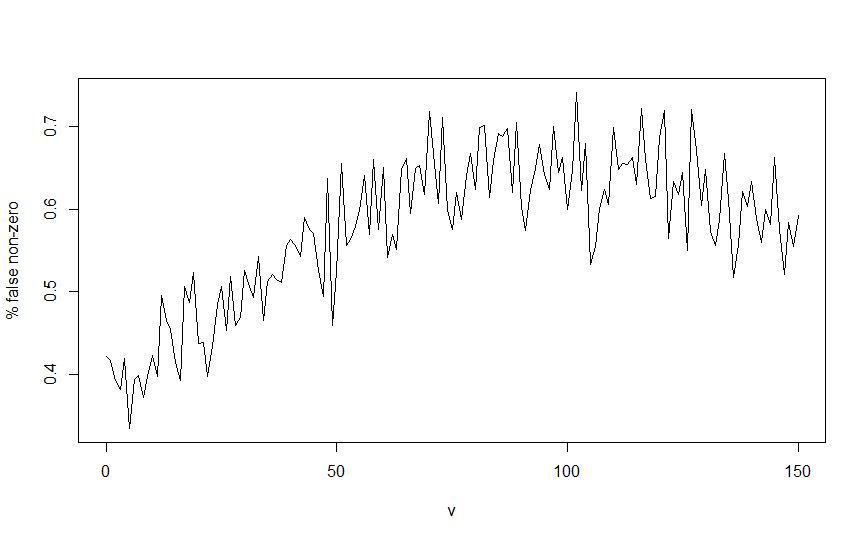} \\
 	{\small	$(100 /p_0) Card(\mathcal{A} \cap \widehat{\mathcal{A}}_n)$} &
 	{\small	$(100/(p-p_0)) Card(\widehat{\mathcal{A}}_n \backslash \mathcal{A})$ } 	  	\end{tabular}
 	\caption{\small Sparsity study of   \textit{ag}\_$\mathcal{E}$ depending on $v$ when $\varepsilon \sim \mathcal{E}xp(-1)$, $ p=100$, $n=100$. 	}
  	\label{F10}
 \end{figure}

 \begin{table}[h!]
\begin{center}
 {\tiny  
 	\begin{tabular}{cccc}
 		\hline 
 		$\varepsilon$ & p &  $_{99}\| \bm{\beta}^0 \|_2$ &  $_{95}\| \bm{\beta}^0 \|_2$ \\ 
 		\hline 
 		 {$\mathcal{N}(0,1)$} & 10
 		& 0.69 & 0.6\\    
 		& 100 &  1.19 &0.9\\ \hline 
 		 {$\mathcal{N}(-1.2,0.4^2)+\chi^2(1)$}  & 10
 		& 0.72 & 0.64\\    
 		& 100 & 1.59 & 0.99\\ 	\hline 
 		 {$\mathcal{E}xp(-1)$}  & 10
 		& 0.65 & 0.57\\    
 		& 100 & 1.24 &0.86 	\\	\hline
 	\end{tabular}
 	\caption{\small Effect of $\| \bm{\beta}^0 \|_2$ on sparsity of   \textit{ag}\_$\mathcal{E}$.}
 	 	\label{Table4}
}
\end{center}
  \end{table}
 
 \subsection{Models with grouped variables}
 In this subsection, the explanatory variables are grouped and  the expectile index is $\tau =1/2$. The R language packages   used are \textit{grpreg} with function \textit{grpreg} for expectile regression and \textit{rqPen} with function \textit{QICD.group} for quantile regression.
 \subsubsection{Fixed $p$ case}
 \label{cas_ok}
 \textbf{Parameters choice.} 
 We take $  \lambda_n=n^{-1/2- \gamma/4}$, $ \mathcal{A}=\{1,...,4\}$ and:  $ \bm{\beta}_1^0=(0.5,1,1.5,1,0.5)$, $\bm{\beta}_2^0=(1,1,1,1,1)$, $ \bm{\beta}_3^0=(-1,0,1,2,1.5)$, $\bm{\beta}_4^0=(-1.5,1,0.5,0.5,0.5)$,  $\bm{\beta}_j^0=\bm{0}$, for any  $j > p^0$.
  The   errors $\varepsilon$   follow   $\mathcal{N}(0,1)$ or    Cauchy   $\mathcal{C}(0,0.1)$ distributions while the explanatory variables are  normal standard distributed.
  For   \textit{ag}\_$\mathcal{Q}$,   the tuning parameter is of order $n^{-3/5}$ and the power  value in the weight of the penalty is 1.225 (see \cite{Ciuperca.2020}).\\
\noindent  \textbf{Results.} The same sparsity measurements as in Subsubsection \ref{sect_pn} are presented,  through 1000 Monte Carlo replications.
From Table \ref{Table5} we deduce in the case  $\varepsilon \sim \mathcal{N}(0,1)$ that   \textit{ag}\_$\mathcal{E}$ has a better sparsity property than   \textit{ag}\_$\mathcal{Q}$, especially when $n$ is small. 
Since Cauchy distribution has no mean, when $\varepsilon \sim \mathcal{C}(0,0.1)$, if $n$ is small, \textit{ag}\_$\mathcal{E}$ is still better for the true non-zeros, but worse for the false non-zeros ($Card(\widehat{\mathcal{A}}_n \backslash \mathcal{A})$) in either case.  If $n$ increases, the penalized quantile estimation is always better.
 
 \subsubsection{Case when $p$ is depending on $n$}
 \textbf{Parameters choice.} 
 We calibrate $p$ and $p^0$ in two ways: firstly 
 $p=\lfloor  n/5 \rfloor $,    $p^0=2 \lfloor 5^{-1}n^{1/2} \rfloor $ and then  $c=1$, afterwards 
  $p=\lfloor  n(2\log{n})^{-1} \rfloor $,  $p^0=2 \lfloor n^{1/4} \rfloor $ and then  $c<1$.   We consider  $\bm{\beta}_j^0  \sim \mathcal{N}(\bm{0}_5,2\bm{I}_5)$, for any $j= 1, \cdots ,    p_0$, with $\bm{I}_5$ the identity matrix of order 5. 
 All the other parameters are similar as in the case of fixed  $p$. \newline
 \noindent \textbf{Results.} 
   The same measurements of  sparsity and accuracy   as in Subsubsection \ref{sect_pn} are presented.
  The results of Table \ref{Table6}  are for the case $p =O( n)$.  
 When  $\varepsilon \sim \mathcal{N}(0,1)$,   \textit{ag}\_$\mathcal{E}$ is always better in the variable selection and more accurate for the two values of $\gamma$ considered: 1/10 and 9/10 and for all values of $n$. When $\varepsilon \sim \mathcal{C}(0,0.1)$, for $\gamma =1/10$ in adaptive weight of  \textit{ag}\_$\mathcal{E}$, the   results are similar for the two estimation methods, while for  $\gamma =9/10$ the obtained results by \textit{ag}\_$\mathcal{E}$ are better than by \textit{ag}\_$\mathcal{Q}$.   
 Moreover,  in Table \ref{Table7}, the sparsity and accuracy results of the  penalized expectile estimation are better  corresponding to the two considered values of $\gamma$.
 \begin{table}[h!]
 	\begin{center}
 {\tiny  
 	\begin{tabular}{ccc|cc|c|ccc|c|}
 		\hline 
 		$\varepsilon$ & n & p & \multicolumn{3}{c|}{100 ${p^0}^{-1} Card(\mathcal{A} \cap \mathcal{A}_n)$}& \quad & \multicolumn{3}{c|}{100$(p-p^0)^{-1} Card(\widehat{\mathcal{A}}_n \backslash \mathcal{A})$} \\  
 		\quad & \quad & \quad  & \multicolumn{2}{c|}{\textit{ag}\_$\mathcal{E}$} & \textit{ag}\_$\mathcal{Q}$ & \quad & \multicolumn{2}{c|}{\textit{ag}\_$\mathcal{E}$} & \textit{ag}\_$\mathcal{Q}$  \\  		
 		\quad & \quad & \quad  & $\gamma=1/10$ & $\gamma=9/10$ & \quad & \quad & $\gamma=1/10$ & $\gamma=9/10$ & \quad \\ \hline 
 		
 		\multirow{1}{1cm}{$\mathcal{N}(0,1)$} & \multirow{1}{1cm}{50} 
 		& 5 & 98.53 & 99 & 18.35 & \quad & 4.3 & 3.9 & 0.5\\    
 		&& 7 & 98.63 & 98.53 & 18.425 & \quad & 4.3 & 4.8 & 0.4\\
 		&& 9 & 98.55 & 98.68 & 18.6 & \quad & 5.06 & 4.9 & 1.22\\ \cline{2-10}
 			
 		& \multirow{1}{1cm}{250} 
 		& 10 & 100 & 100 & 93.62 & \quad & 0 & 0 & 0.15\\
 		&& 25 & 100 & 100 & 93 & \quad & 0 & 0 & 0.187\\    
 		&& 49 & 100 & 100 & 93.3 & \quad & 0 & 0 & 0.204\\  \cline{2-10}
 			
 		& \multirow{1}{1cm}{500} 
 		& 10 & 100 & 100 & 100 & \quad & 0 & 0 & 0.033\\
 		&& 25 & 100 & 100 & 100 & \quad & 0 & 0 & 0.062\\    
 		&& 99 & 100 & 100 & 100 & \quad & 0 & 0 & 0.081\\ \hline 
 		
 		\multirow{1}{1cm}{$\mathcal{C}(0,0.1)$} & \multirow{1}{1cm}{50} 
 		& 5 & 94.7 & 94.43 & 29.8 & \quad & 2 & 3.4 & 2.3\\    
 		&& 7 & 93.8 & 94.73 & 29.48 & \quad & 2.93 & 2.63 & 1.97\\
 		&& 9 & 94.5 & 94.63 & 29.58 & \quad & 2.58 & 3.12 & 1.46\\ \cline{2-10}
 	 		
 		& \multirow{1}{1cm}{250} 
 		& 10 & 100 & 99.98 & 99.63 & \quad & 1.15 & 1.12 & 0.7\\
 		&& 25 & 99.98 & 100 & 99.85 & \quad & 1.94 & 1.19 & 0.61\\    
 		&& 49 & 99.98 & 99.99 & 99.75 & \quad & 0.48 & 1.42 & 0.7\\ \cline{2-10}
 			
 		& \multirow{1}{1cm}{500} 
 		& 10 & 100 & 99.95 & 100 & \quad & 1.02 & 1.08 & 0.017\\
 		&& 25 & 99.95 & 99.95 & 100 & \quad & 0.98 & 0.5 & 0.019\\    
 		&& 99 & 99.98 & 99.98 & 100 & \quad & 0.91 & 0.75 & 0.008\\ \hline
  	\end{tabular}
 	\caption{\small Sparsity study of  \textit{ag}\_$\mathcal{E}$ when  $\tau=1/2$,    $p^0=4$. Comparison with  \textit{ag}\_$\mathcal{Q}$.}
 	  	\label{Table5}
}  
\end{center}
 \end{table}

 \begin{table}[h!]
 	\begin{center}
 {\tiny   
 \begin{tabular}{cc|cc|c|cc|c|cc|c|cc|c|}
 	\hline 
 	\quad & \quad  & \multicolumn{3}{|c|}{100 ${p^0}^{-1} Card(\mathcal{A} \cap \widehat{\mathcal{A}}_n)$}&  \multicolumn{3}{c|}{100$(p-p^0)^{-1} Card(\widehat{\mathcal{A}}_n \backslash \mathcal{A})$} &  \multicolumn{3}{c|}{mean($| \widehat{\bm{\beta}}_n - \bm{\beta}^0|$ )} &  \multicolumn{3}{c|}{mean($| (\widehat{\bm{\beta}}_n - \bm{\beta}^0)_{\mathcal{A}}| $)} \\   
 	
 	&  & \multicolumn{2}{c|}{\textit{ag}\_$\mathcal{E}$} & \textit{ag}\_$\mathcal{Q}$ & \multicolumn{2}{c|}{\textit{ag}\_$\mathcal{E}$} & \textit{ag}\_$\mathcal{Q}$ & \multicolumn{2}{c|}{\textit{ag}\_$\mathcal{E}$} & \textit{ag}\_$\mathcal{Q}$ & \multicolumn{2}{c|}{\textit{ag}\_$\mathcal{E}$} & \textit{ag}\_$\mathcal{Q}$  \\  
 	
 	$\varepsilon$ & n & $\gamma=\frac{1}{10} $ & $\gamma=\frac{9}{10}$ & \quad &$\gamma=\frac{1}{10} $ & $\gamma=\frac{9}{10}$  & \quad &$\gamma=\frac{1}{10} $ & $\gamma=\frac{9}{10}$ & \quad & $\gamma=\frac{1}{10} $ & $\gamma=\frac{9}{10}$  & \quad  \\ \hline  		 		
 		\multirow{1}{1cm}{$\mathcal{N}(0,1)$} & 51 &  100 & 100 & 92.55 & 0.025 & 0 & 10.2 &  0.011 & 0.033 & 0.21 & 0.041 &0.099 & 1.27
 		\\    
 		& 101 &  100 & 100 & 99.325 & 0.025 & 0  & 9.7 &  0.009 & 0.031 & 0.21 & 0.044 &0.216 & 1.07\\
 		& 249 &  100 & 100 & 100 & 0 &0 & 9 &  0.0037 & 0.024 & 0.041 & 0.0375
 		&0.217 & 0.306 \\ 	\hline
 		\multirow{1}{1cm}{$\mathcal{C}(0,0.01)$} & 51 &  99.9 & 100 & 99.1 & 10.21 & 0 & 10.17 &  0.231 & 0.055 & 0.27 & 0.24 &0.349 & 1.83
 		\\    
 		& 101 &  99.97 & 100 & 96.35 & 12.1 & 0& 9.47 &   0.0385 & 0.033 & 0.145 & 0.063 & 0.237 & 0.78\\
 		& 249 &  99.98 & 100 & 100 & 12.38 &0&  8.12  &  0.19 & 0.028 & 0.056 & 0.197 &0.259 & 0.461 \\ \hline
 		\end{tabular}
 		\caption{\small Sparsity study of \textit{ag}\_$\mathcal{E}$ when $\tau=1/2$,  $p=\lfloor  n/5 \rfloor $,  $p^0=2 \lfloor 5^{-1} n^{1/2} \rfloor $. Comparison with \textit{ag}\_$\mathcal{Q}$.}
 		 	\label{Table6}
}
\end{center}
  \end{table}
 
   \begin{table}[hbt!]
 	\begin{center}
 		{\tiny  
 	 \begin{tabular}{cc|cc|c|cc|c|cc|c|cc|c|}
 		\hline 
 		\quad & \quad  & \multicolumn{3}{|c|}{100 ${p^0}^{-1} Card(\mathcal{A} \cap \widehat{\mathcal{A}}_n)$}&  \multicolumn{3}{c|}{100$(p-p^0)^{-1} Card(\widehat{\mathcal{A}}_n \backslash \mathcal{A})$} &  \multicolumn{3}{c|}{mean($| \widehat{\bm{\beta}}_n - \bm{\beta}^0|$ )} &  \multicolumn{3}{c|}{mean($| (\widehat{\bm{\beta}}_n - \bm{\beta}^0)_{\mathcal{A}}| $)} \\   
 		
 		&  & \multicolumn{2}{c|}{\textit{ag}\_$\mathcal{E}$} & \textit{ag}\_$\mathcal{Q}$ & \multicolumn{2}{c|}{\textit{ag}\_$\mathcal{E}$} & \textit{ag}\_$\mathcal{Q}$ & \multicolumn{2}{c|}{\textit{ag}\_$\mathcal{E}$} & \textit{ag}\_$\mathcal{Q}$ & \multicolumn{2}{c|}{\textit{ag}\_$\mathcal{E}$} & \textit{ag}\_$\mathcal{Q}$  \\  
 		
 		$\varepsilon$ & n & $\gamma=\frac{1}{10} $ & $\gamma=\frac{9}{10}$ & \quad &$\gamma=\frac{1}{10} $ & $\gamma=\frac{9}{10}$  & \quad &$\gamma=\frac{1}{10} $ & $\gamma=\frac{9}{10}$ & \quad & $\gamma=\frac{1}{10} $ & $\gamma=\frac{9}{10}$  & \quad  \\ \hline 	
 		\multirow{1}{1cm}{} & 51 &  99.95 & 100 & 95.15 & 6.55 & 0 & 32.5 &  0.0411 & 0.12 & 0.87 & 0.061 & 0.227 & 1.13
 		\\    
 		& 101 &  99.87 & 100 & 99.9 & 1.375 & 0 & 35.6 &  0.027 & 0.101 & 0.42 & 0.0347 & 0.114 & 0.8\\
 		& 249 &  100 & 100 & 100 & 0 &0 & 38.37 &  0.007 & 0.041 & 0.056 & 0.0157 &0.158 & 0.195 \\ \hline
 		
 		\multirow{1}{1cm}{$\mathcal{C}(0,0.01)$} & 51 &  99.15 & 100 & 96.32 & 19.65 & 0  & 30.9 & 0.062 & 0.0893 & 0.7 &  0.0756 &0.177 & 0.921
 		\\    
 		& 101 &  99.98 & 99.99 & 99.5 & 18.55 &0.05& 36.2 & 0.061 & 0.437 & 0.401 & 0.07 & 0.185 & 0.674\\
 		& 249 &  100 & 99.85 & 100 & 17.44& 0.019 & 32.12 &  0.037 & 0.172 & 0.076 & 0.0575& 0.173& 0.385 \\ \hline
 	 	\end{tabular}
  		\caption{\small Sparsity study of \textit{ag}\_$\mathcal{E}$ when $\tau=1/2$, $p=\lfloor  n(2\log{n})^{-1} \rfloor $,  $p^0=2 \lfloor n^{1/4} \rfloor $. Comparison with \textit{ag}\_$\mathcal{Q}$.}
  			\label{Table7}
  	}
  \end{center}
   \end{table}
 \section{Application on real data}
 \label{Sect_appli}
 In addition to the simulation study presented in the previous section, we demonstrate in this section the practical utility of the proposed estimator. Thus,   \textit{ag}\_$\mathcal{E}$ will be used on real data concerning air pollution, especially two gases: ozone and nitrogen dioxide.
 \subsection{Data}
 We consider the following data:  \href{http://archive.ics.uci.edu/ml/datasets/Air+Quality}{\textit{http://archive.ics.uci.edu/ml/datasets/Air+Quality}}.   
 The studied variables are the concentrations of: carbon monoxide (CO), benzene  (C6H6), nitrogen oxides  (NOx), nitrogen dioxide (NO2), ozone  (O3), temperature (T), relative humidity (RH) and absolute humidity (AH).
 The explained variables are O3 and NO2. We will add to the explanatory variables, the concentrations of O3 and NO2 up to three day before.   More precisely, for ozone, we will add the variables 
 $\text{O3}_{-1},\text{O3}_{-2},\text{O3}_{-3}$  which are respectively the daily ozone concentration one, two and three days before the observation.   
 All usable data are taken from March 13, 2004 to May 4, 2005.
 Learning database is from May 1 to September 15, 2004 (115 observations) while the test database is from September 16 to 30,  2004 (15 observations).
 \subsection{Grouped case}
 We will first determine the groups of variables influencing the explained variable.\\
 \textbf{ Model and parameters choice}:   We consider $\tau=1/2$,  $\gamma=1$ (as $p \ll n$), $\lambda_n=n^\xi$, with $\xi$ chosen so that the accuracy of the estimation of the non-zero parameters is minimal. 
 Three group of variables are formed: the "pollutant" group: CO, C6H6, NOx and O3 or NO2 depending on the studied explained variable; the "weather" group: T, RH, AH; the "past" group: $\text{O3}_{-1},\text{O3}_{-2},\text{O3}_{-3}$ or  $\text{NO2}_{-1},\text{NO2}_{-2},\text{NO2}_{-3}$, depending on the studied explained variable.\\
  \textbf{Results when the response variable is  Ozone.}  
 Following values  for the euclidean norm of the grouped variables are obtained:
 $ \| \hat{\bm{\beta}}_n ({\text{pollutant}})\|_2=0.46$, $ \| \hat{\bm{\beta}}_n({\text{weather}} )\|_2=0.15$, $ \| \hat{\bm{\beta}}_n({\text{past}}) \|_2=0.29$.
  Hence, all groups are selected. \newline
 \textbf{Results when the response variable is Nitrogen dioxide.} 
 In this case: $ \| \hat{\bm{\beta}}_n ({\text{pollutant}})\|_2=0.15$, $ \| \hat{\bm{\beta}}_n ({\text{weather}} ) \|_2=0.25$, $ \| \hat{\bm{\beta}}_n ({\text{past}})\|_2=0.0091$. 
  \subsection{Case ungrouped variables}
Because the three groups of variables are relevant, to better study the influence of each variable of a group, we now consider models with ungrouped  explanatory variables.\\
\textbf{Parameters choice}:  We take $\gamma=1$ (since $p \ll n$).    For a better approach of the model,  the expectile index $\tau$ is estimated for each model, taking into account relation (\ref{tau}). More precisely, using $(\widetilde{y}_i)_i$ the normalized observations of $Y$, $\tau$ is estimated by:
 $$
 \widehat{\tau}=\frac{n^{-1} \sum_{i=1}^n \widetilde{y}_i \e1_{\widetilde{y}_i<0}}{n^{-1} (\sum_{i=1}^n \widetilde{y}_i \e1_{\widetilde{y}_i<0}-\sum_{i=1}^n \widetilde{y}_i \e1_{\widetilde{y}_i>0})}.
 $$
 Tables \ref{Table8} and  \ref{Table9} present the  \textit{MAD} which is the empirical mean of the absolute values of residuals and the empirical variance of the residuals. \\
 \textbf{Results when the response variable is  Ozone.}     For $\hat{\tau}=0.42$, $\lambda_n=n^{-0.999}$,   the selected explanatory variables by the adaptive LASSO expectile method are:
 $
 \text{C6H6},\text{N02},\text{T},\text{AH}$, $\text{O3}_{-1}$:  $\widehat{\bm{\beta}}_{n;\text{C6H6}}=0.78$, $ \widehat{\bm{\beta}}_{n;\text{N02}}=0.016$,  $\widehat{\bm{\beta}}_{n;\text{T}}=0.14$, $ \widehat{\bm{\beta}}_{n;\text{AH}}=-0.024$,  $\widehat{\bm{\beta}}_{n;\text{O3}_{-1}}=0.18$.  
 The results by the adaptive LASSO expectile method are compared with  those obtained by the adaptive LASSO  quantile  and the classical LS methods  (Table \ref{Table8}). Note that the adaptive LASSO expectile and LS estimators  are more precise than the adaptive LASSO quantile estimator on the learning and test data.\\
\noindent \textbf{Results when the response variable is Nitrogen dioxide.} 
 For $\hat{\tau}=0.31$, $\lambda_n=n^{-0.999}$,   the selected variables are: $\text{CO},\text{NOx},\text{T},\text{AH},\text{NO2}_{-1},\text{NO2}_{-2},\text{NO2}_{-3}$:
 $ \widehat{\bm{\beta}}_{n;\text{CO}}=0.42$,  $
 \widehat{\bm{\beta}}_{n;\text{NOx}}=0.25$,  $
 \widehat{\bm{\beta}}_{n;\text{T}}=0.37$, $
 \widehat{\bm{\beta}}_{n;\text{AH}}=-0.2$,  $
 \widehat{\bm{\beta}}_{n;\text{NO2}_{-1}}=0.066$,  $
 \widehat{\bm{\beta}}_{n;\text{NO2}_{-2}}=0.0001$,  $
 \widehat{\bm{\beta}}_{n;\text{NO2}_{-3}}=0.0096$. 
 The precision of the three estimation methods is roughly the same, with a slight advantage for the adaptive LASSO expectile estimation (Table \ref{Table9}).

 \begin{table}[h!]
 	\begin{center}
 		{\tiny  
 	\begin{tabular}{ccccccc}
 		\hline 
 		\tiny Estimator & \tiny MAD on all data & \tiny  MAD on learning data &  \tiny MAD on test data & \tiny Variance on all data & \tiny Variance learning & \tiny Variance test \\ 
 		\hline 
 		 Adaptive LASSO  expectile & 0.504 & 0.303 & 0.43 & 0.484 & 0.162 & 0.432 \\ \hline 
 		 Adaptive LASSO quantile  & 0.473 & 0.355 & 0.612 & 0.404 & 0.231 & 0.551 \\ \hline 
 		  Least squares & 0.522 & 0.31 & 0.428 & 0.516 & 0.152 & 0.464 \\ \hline
 	\end{tabular}
  	\caption{\small Comparison of the performance of estimators, for Ozone modelisation, ungrouped variable.}
  		\label{Table8}
}
\end{center}
  \end{table}

 \begin{table}[h!]
 	\begin{center} 	{\tiny  
 	\begin{tabular}{ccccccc}
 		\hline 
 		\tiny Estimator & \tiny MAD on all data & \tiny  MAD on learning data &  \tiny MAD on test data & \tiny Variance on all data & \tiny Variance learning & \tiny Variance test \\ 	\hline 
 		  adaptive  LASSO expectile  & 0.880 & 0.347 & 0.416 & 15.54 & 0.194 & 0.331 \\ \hline 
 		  adaptive  LASSO quantile & 0.853 & 0.349 & 0.445 & 16.18 & 0.219 & 0.363 \\ 		
 		\hline 
 		  Least squares & 0.84 & 0.334 & 0.532 & 11.8 & 0.174 & 0.515 \\ \hline
  	\end{tabular}
  		\caption{\small Comparison of the performance of estimator for NO2 modelization, ungrouped variables.}
  	  	\label{Table9}
}
\end{center}
 \end{table}
 
 \section{Proofs}  
 \label{Sect_proofs}
 In this section we present the proofs of the results stated in Sections \ref{Sect_expectile} and \ref{Sect_generalisation}.
 \subsection{Proofs of Section \ref{Sect_expectile}}
 \label{proof_expectile}
 \subsubsection{Proofs of Subsection \ref{subsect_pfix}}
 \begin{proof}[\textbf{Proof of Lemma \ref{L1}}]
 	According to \cite{Wu.Liu.2009}, it suffices to show that, for all $\epsilon >0$ , there exists $B_{\epsilon} >0$ large enough such that, for $n$ large enough:
 	$
 	\PP\big[ \underset{\textbf{u} \in \mathbb{R}^r, \| \textbf{u} \|_2 =1 }{\inf} \mathcal{Q}_n(\eb^0+B_{\epsilon} n^{-1/2} \textbf{u}) > \mathcal{Q}_n(\eb^0 )\big] \geq 1- \epsilon$.  
 	Let $B>0$ be and  $\eu \in \R^r$  such that  $ \| \eu \|_2 =1$. Otherwise, since for $j > p^0$, $\| \eb^0_j+B n^{-1/2} \textbf{u}_j \|_2 - \| \eb^0_j \|_2=\| B n^{-1/2} \textbf{u}_j \|_2  \geq 0$, then we have:
 	\begin{equation}
  	\begin{split}
 &	\mathcal{Q}_n(\eb^0+B n^{-1/2} \textbf{u})-\mathcal{Q}_n(\eb^0 )  \\ 
 	 & \geq n^{-1} (\mathcal{G}_n(\eb^0+B n^{-1/2} \textbf{u}) - \mathcal{G}_n(\eb^0 ) ) + \lambda_n \sum_{j=1}^{p^0} \widehat{\omega}_{n;j} ( \| \eb^0_j+B n^{-1/2} \textbf{u}_j \|_2 - \| \eb^0_j \|_2 ).
 		\label{Enew1}
 	\end{split}
 	\end{equation}
 By the proof of Theorem 1 of \cite{Liao.2018}, under assumptions (A1), (A2), (A3), we have:
 	\begin{align}
 	\mathcal{G}_n(\eb^0+B n^{-1/2} \textbf{u}) - \mathcal{G}_n(\eb^0 )&=\sum_{i=1}^n  \big(\rho_{\tau}(\varepsilon_i-\frac{B\mathbb{X}_i^\top\textbf{u}}{\sqrt{n}})-\rho_{\tau}(\varepsilon_i)\big) \nonumber \\
 	  &	=\sum_{i=1}^n \big(g_{\tau}(\varepsilon_i) \frac{B\mathbb{X}_i^\top\textbf{u}}{\sqrt{n}} + \frac{h_{\tau}(\varepsilon_i)}{2}(\frac{B\mathbb{X}_i^\top\textbf{u}}{\sqrt{n}})^2\big) +o_\PP(1).
 \label{gg}
 	\end{align}
In order to study (\ref{gg}), we will prove the following two asymptotic results:
 	\begin{equation}
 	\label{E8.2}
 	n^{-1/2}\sum_{i=1}^n g_{\tau}(\varepsilon_i)  {\mathbb{X}_i}  \overset{\mathcal{L}}{\underset{n \rightarrow \infty}{\longrightarrow}} \mathcal{N}(0,\sigma_{g_{\tau}}^2 \eU).
 	\end{equation}
 	\begin{equation}
 	\label{E8.4}
 	n^{-1}\sum_{i=1}^n  h_{\tau}(\varepsilon_i)  \mathbb{X}_i \eeX_i^\top  \overset{\mathbb{P}}{\underset{n \rightarrow \infty}{\longrightarrow}}  \mu_{h_{\tau}}  \eU.
 	\end{equation}
 By assumption (A1), we get:  	$	\eE  [ n^{-1/2}g_{\tau}(\varepsilon_i) \mathbb{X}_i ] =0$. 
 	 Moreover, 	by assumptions (A1), (A3) and relation (\ref{Gg}): $\Var\big[n^{-1/2} \sum_{i=1}^n g_{\tau}(\varepsilon_i)  \mathbb{X}_i \big]   \underset{n \rightarrow +\infty}{\longrightarrow } \sigma_{g_{\tau}}^2 \eU$. On the other hand,  for $\xi >0$, we have:
 	\begin{equation*}
 	\begin{split}
 	\eE\big[  \big\| g_{\tau}(\varepsilon_i) \frac{\mathbb{X}_i}{\sqrt{n}} \big\|_2^2 \cdot \e1_{\| g_{\tau}(\varepsilon_i) \frac{\mathbb{X}_i}{\sqrt{n}} \|_2 > \xi}\big]  = & \eE \bigg[ \frac{\| g_{\tau}(\varepsilon_i) \frac{\mathbb{X}_i}{\sqrt{n}} \|_2^4}{\| g_{\tau}(\varepsilon_i) \frac{\mathbb{X}_i}{\sqrt{n}} \|_2^2} \e1_{\| g_{\tau}(\epsilon_i) \frac{\mathbb{X}_i}{\sqrt{n}} \|_2 > \xi}\bigg]  \\ 
 	\leq &\frac{\eE \big[ \| g_{\tau}(\varepsilon_i) \frac{\mathbb{X}_i}{\sqrt{n}} \|_2^4\e1_{\| g_{\tau}(\varepsilon_i) \frac{\mathbb{X}_i}{\sqrt{n}} \|_2 > \xi}\big]}{\xi^2}  \leq \frac{\eE\big[ \| g_{\tau}(\varepsilon_i) \frac{\mathbb{X}_i}{\sqrt{n}} \|_2^4\big]}{\xi^2}.
 	\end{split}
 	\end{equation*}
 	Then, by assumptions (A1) and (A3), we obtain:
 	$$
 	\sum_{i=1}^n \eE\big[  \big\| g_{\tau}(\varepsilon_i) \frac{\mathbb{X}_i}{\sqrt{n}} \big\|_2^2 \cdot \e1_{\| g_{\tau}(\varepsilon_i) \frac{\mathbb{X}_i}{\sqrt{n}} \|_2 > \xi}\big] \leq  \frac{1}{\xi^2} \eE\big[g_{\tau}(\varepsilon)^4\big] \sum_{i=1}^n \bigg(\frac{\mathbb{X}_i^\top \mathbb{X}_i}{n}\bigg)^2 \underset{n \rightarrow +\infty}{\longrightarrow } 0.
 	$$
 Thus, by the Central Limit Theorem (CLT) of Linderberg-Feller we obtain (\ref{E8.2}). \\
 \noindent	Relation (\ref{E8.4}) follows using the decomposition $
 	n^{-1}\sum_{i=1}^n h_{\tau}(\varepsilon_i) \eeX_i \eeX_i^\top =	n^{-1}\sum_{i=1}^n \big[h_{\tau}(\varepsilon_i) -\eE[h_{\tau}(\varepsilon_i)]+ \eE[h_{\tau}(\varepsilon_i)] \big] \eeX_i \eeX_i^\top$ and coupling 
 	assumption (A3) with  the strong Law of Large Numbers (LLN). Then, relations (\ref{gg}), (\ref{E8.2}) and (\ref{E8.4}) imply:
 \begin{equation}
 \label{ggg}
 \mathcal{G}_n(\eb^0+B n^{-1/2} \textbf{u}) - \mathcal{G}_n(\eb^0) =O_\PP(B^2).
 \end{equation}
 	Let us now study the second term of the right-hand side of (\ref{Enew1}).
 	For any $j \leq p^0$ we have $\| \eb_j^0 \|_2 \ne 0$ and, by the consistency of the expectile estimator, we obtain:
 	\begin{equation}
 	\label{onj}
 	\widehat{\omega}_{n;j} \overset{\mathbb{P}}{\underset{n \rightarrow \infty}{\longrightarrow}} \| \eb_j^0 \|_2 ^{-\gamma} \ne 0.
 	\end{equation}
 	By elementary calculations:
 	$
 n^{1/2} \big|\| \eb_j^0+ n^{-1/2}{\textbf{u}_j}  \|_2 - \| \eb_j^0 \|_2 \big|  \leq   \| \eu_j \|_2 $. Using assumption (\ref{E3.1})(a), relation (\ref{onj}) and    Slutsky's Lemma, we obtain: $	 n \lambda_n\sum_{j=1}^{p^0} \widehat{\omega}_{n;j} (\| \eb_j^0+n^{-1/2} {\textbf{u}_j}  \|_2 - \| \eb_j^0 \|_2 ) \overset{\mathbb{P}}{\underset{n \rightarrow \infty}{\longrightarrow}} 0$, from where  
 \begin{equation}
 	\label{pi}
 	\lambda_n\sum_{j=1}^{p^0} \widehat{\omega}_{n;j} (\| \eb_j^0+n^{-1/2} {\textbf{u}_j}  \|_2 - \| \eb_j^0 \|_2 ) =o(n^{-1}).
\end{equation}
 	Finally, relations (\ref{Enew1}), (\ref{ggg}) and (\ref{pi}) imply: $
 	\mathcal{Q}_n(\eb^0+B n^{-1/2} \textbf{u})-\mathcal{Q}_n(\eb^0 ) \geq O_\PP(B^2 n^{-1})+o_\PP(Bn^{-1}) $.  
 	So, the lemma follows by choosing $B$ and $n$ large enough.
 \end{proof}

 \begin{proof}[\textbf{Proof of Theorem  \ref{T1}}]
 	By Lemma \ref{L1}, we can set $\widehat{\textbf{u}}_n \equiv n^{1/2}(\widehat{\eb}_n-\ebo)$ and more generally $\eu=n^{1/2}(\eb-\ebo)$ for $\eb \in \R^r$, $\eu \in \R^r $ such that  $  \| \eu \|_2 \leq C. $
	We have $Y_i-\mathbb{X}_i^\top\eb= \varepsilon_i- n^{-1/2}{\mathbb{X}_i^\top\eu} $. 
 	By elementary calculations, for $t \rightarrow 0$, we obtain  $\rho_\tau(\varepsilon - t)=\rho_\tau(\varepsilon)+g_\tau(\varepsilon)t+t^2/2 h_\tau(\varepsilon^2)+o_\PP(t^2)$. Using also the Cauchy-Schwarz inequality $|\eeX_i^\top \eu | \leq \| \eeX_i\|_2 \|\eu\|_2$, together with assumption (A2), we have: $n^{-1/2} {\mathbb{X}_i^\top\textbf{u}}  \underset{n \rightarrow +\infty}{\longrightarrow } 0$, $ \| \eu \|_2 \leq C$. Thus, we get:
 	\begin{equation}
 	\label{E8}
 	\rho_{\tau}\big(\varepsilon_i-n^{-1/2}\mathbb{X}_i^\top\eu\big)-\rho_{\tau}(\varepsilon_i)=g_{\tau}(\varepsilon_i) \frac{\mathbb{X}_i^\top\eu}{\sqrt{n}} + \frac{h_{\tau}(\varepsilon_i)}{2}\bigg(\frac{\mathbb{X}_i^\top\eu}{\sqrt{n}}\bigg)^2+o(n^{-1}).
 	\end{equation}
 	On the other hand,  $\widehat{\textbf{u}}_n$  minimises the process: $
 	L_n(\textbf{u})\equiv\sum_{i=1}^n \big( \rho_{\tau}\big(\varepsilon_i-n^{-1/2}{\mathbb{X}_i^\top\textbf{u}}\big)-\rho_{\tau}(\varepsilon_i)\big)+n \lambda_n\sum_{j=1}^p \widehat{\omega}_{n;j} \big(\| \eb_j^0+ n^{-1/2}{\textbf{u}_j}  \|_2 - \| \eb_j^0 \|_2 \big) $, 
 	which can be written, taking into account (\ref{E8}):
 	\begin{equation}
 	\label{E8.1}
 	L_n(\textbf{u})= \sum_{i=1}^n \big( g_{\tau}(\varepsilon_i) \frac{\mathbb{X}_i^\top\textbf{u}}{\sqrt{n}} + \frac{h_{\tau}(\varepsilon_i)}{2}\big(\frac{\mathbb{X}_i^\top\textbf{u}}{\sqrt{n}}\big)^2\big) +o_\PP(1) + n\lambda_n\sum_{j=1}^p \widehat{\omega}_{n;j} \big(\| \eb_j^0+\frac{\textbf{u}_j}{\sqrt{n}} \|_2 - \| \eb_j^0 \|_2 \big),
 	\end{equation}
 Remark that relations (\ref{E8.2}) and (\ref{E8.4}) hold for the first term of the right-hand side of   (\ref{E8.1}). \\
 \noindent 	We are now interested in the second term of the right-hand side of  relation (\ref{E8.1}). We will prove: 
 	\begin{equation}
 	\label{E8.5}
 	n	\lambda_n\sum_{j=1}^p \widehat{\omega}_{n;j} (\| \eb_j^0+n^{-1/2}{\textbf{u}_j}  \|_2 - \| \eb_j^0 \|_2) \overset{\mathbb{P}}{\underset{n \rightarrow \infty}{\longrightarrow}} \sum_{j=1}^p W(\eb_j^0,\textbf{u}), 
 	\end{equation}
   	with:
 	$$
 	W(\eb_j^0,\textbf{u}) = \left\{
 	\begin{array}{ll}
 	0 &\quad \text{if} \quad j \in {\cal A}\\
 	0 &\quad \text{if} \quad  j \in {\cal A}^c \quad  \text{and} \quad \textbf{u}_j= \textbf{0}_{d_j} \\
 	\infty& \quad \text{if} \quad j \in {\cal A}^c \quad \text{and} \quad  \textbf{u}_j \ne \textbf{0}_{d_j}.
 	\end{array}
 	\right. 
 	$$
 For  $\forall j \in {\cal A}$, by the proof of Lemma  \ref{L1},  we have:  $n \lambda_n \widehat{\omega}_{n;j} (\| \eb_j^0+n^{-1/2}{\textbf{u}_j}  \|_2 - \| \eb_j^0 \|_2 ) \overset{\mathbb{P}}{\underset{n \rightarrow \infty}{\longrightarrow}} 0$. \\
\noindent   For  $\forall j \in {\cal A}^c$, we have  $ \| \eb_j^0+ n^{-1/2}{\textbf{u}_j}  \|_2 - \| \eb_j^0 \|_2 = n^{-1/2}\|\textbf{u}_j \|_2$ and $\widehat{\omega}_{n;j}  =O_\PP(n^{\gamma /2})$.
 	Using  condition  (\ref{E3.1})(b),  	since $\| n^{1/2}\widetilde{ \eb}_{n;j} \|=O_\PP(1)$ we have, if $\|\textbf{u}_j \|_2 \neq 0$: $
 	 n \lambda_n \widehat{\omega}_{n;j}(\| \eb_j^0+ n^{-1/2}{\textbf{u}_j}  \|_2 - \| \eb_j^0 \|_2 ) =  n^{ (\gamma+1)/2} \lambda_n \| n^{-1/2}\widetilde{ \eb}_{n;j} \|_2 ^{-\gamma} \overset{\mathbb{P}}{\underset{n \rightarrow \infty}{\longrightarrow}} \infty$.
Then,  
 	$$
 n	\lambda_n \widehat{\omega}_{n;j} (\| \eb_j^0+n^{-1/2}{\textbf{u}_j}  \|_2 - \| \eb_j^0 \|_2 ) \overset{\mathbb{P}}{\underset{n \rightarrow \infty}{\longrightarrow}} \left\{
 	\begin{array}{ll}
 	0 &\quad \text{if} \quad \eu_j = \textbf{0}_{d_j}\\
 	\infty & \quad \text{if} \quad \|\eu_j\|_2 \ne 0.
 	\end{array}
 	\right.
 	$$
Relation (\ref{E8.5}) is proved.  From relations (\ref{E8.2}), (\ref{E8.4}), (\ref{E8.1}), (\ref{E8.5}) and the  Slutsky's Lemma, we get:
 	$
 	L_n(\textbf{u}) \overset{\mathcal{L}}{\underset{n \rightarrow \infty}{\longrightarrow}} \textbf{z}^\top\textbf{u} + 2^{-1}\mu_{h_{\tau}} \textbf{u}^\top \eU \textbf{u} + \sum_{j=1}^p W(\eb_j^0,\textbf{u})\equiv L(\textbf{u})$,
with the random vector $\textbf{z} \sim \mathcal{N}(\textbf{0}_r,\sigma_{g_{\tau}}^2 \eU)$.
Now, for all 	$  \textbf{u} \in \R^r$, we write $\textbf{u}=(\textbf{u}_{\mathcal{A}},\textbf{u}_{\mathcal{A}^c})$ with $\textbf{u}_{\mathcal{A}} \in \mathbb{R}^{r^0}$ and $\widehat{\textbf{u}}_n=(\widehat{\textbf{u}}_{n,\mathcal{A}},\widehat{\textbf{u}}_{n\mathcal{A}^c})$.	Since $\widehat{\textbf{u}}_n$ is the   minimizer of $L_n(\textbf{u})$, taking into account (\ref{E8.5}), we  have: $
 	\widehat{\textbf{u}}_{n\mathcal{A}^c} \overset{\mathbb{P}}{\underset{n \rightarrow \infty}{\longrightarrow}} \textbf{0}_{r-r^0}$ and then $L(\textbf{u})= 	\textbf{z}_{\mathcal{A}}^\top\textbf{u}_{\mathcal{A}} + 2^{-1}\mu_{h_{\tau}} \textbf{u}_{\mathcal{A}}^\top \eU_{\mathcal{A}} \textbf{u}_{\mathcal{A}}$,
with $\textbf{z}_{\mathcal{A}} \sim \mathcal{N}(\textbf{0}_{r^0},\sigma_{g_{\tau}}^2 \eU_{\mathcal{A}})$. We can then  find the minimizer of $L(\textbf{u})$, by a result on strictly convex quadratic forms, is $
   -\mu^{-1}_{h_{\tau}} \eU_{\mathcal{A}}^{-1} \textbf{z}_{\mathcal{A}}$. 
 	We have then, by an epi-convergence result of \cite{Geyer.1994} and \cite{KnightFu.2000} that: $
 	\widehat{\textbf{u}}_{n,\mathcal{A}} \overset{\mathcal{L}}{\underset{n \rightarrow \infty}{\longrightarrow}} \mathcal{N}\big(\textbf{0}_{r^0},  \sigma^2_{g_{\tau}} \mu^{-2}_{h_{\tau}}  \eU_{\mathcal{A}}^{-1}\big) $.
 	The proof is finished. 
 \end{proof}
 \begin{proof}[\textbf{Proof of Theorem  \ref{T2}}]
 	By   Lemma \ref{L1}: $\widehat{\eb}_{n,\mathcal{A}} = \eb_{\mathcal{A}}^0 + O_\PP(n^{-1/2})$, $	\widehat{\eb}_{n,\mathcal{A}^c} = O_\PP(n^{-1/2})$. 
 	Then, we consider  $  \eb \in \mathbb{R}^r$, such that $\| \eb -\ebo \|_2=O(n^{-1/2})$, $\eb=(\eb_{\mathcal{A}},\eb_{\mathcal{A}^c})$, $ \eb_{\mathcal{A}} \in \mathbb{R}^{r^0} $, $\eb_{\mathcal{A}^c} \in \mathbb{R}^{r-r^0}$. 
 	In order to prove the theorem,   we  consider an $\eb$ such that  $\| \eb_{\mathcal{A}^c} \|_2 \ne 0$. Then,
 	\begin{equation}
 	\label{E8.7}
 	\begin{split}
 n \big(	\mathcal{Q}_n((\eb_{\mathcal{A}},\textbf{0}_{r-r^0}))-\mathcal{Q}_n((\eb_{\mathcal{A}},\eb_{\mathcal{A}^c})) \big)=&\sum_{i=1}^n \rho_{\tau}(Y_i-\mathbb{X}_{i,\mathcal{A}}^\top\eb_{\mathcal{A}})-\sum_{i=1}^n \rho_{\tau}(Y_i-\mathbb{X}_i^\top\eb)\\
 & \qquad \qquad	-n \lambda_n\sum_{j =p^0+1}^p \widehat{\omega}_{n;j} \| \eb_j \|_2.
 	\end{split}
 	\end{equation}
 	Now,   by assumption (A2) we have,  for any $i=1, \cdots, n$ that $\mathbb{X}_{i,\mathcal{A}}^\top ( \eb_{\mathcal{A}} - \eb_{\mathcal{A}}^0) \underset{n \rightarrow \infty}{\longrightarrow} 0$. Then, we deduce  the following in the same way as for (\ref{E8}): $
 	\sum_{i=1}^n \big(\rho_{\tau}(Y_i-\mathbb{X}_{i,\mathcal{A}}^\top\eb_{\mathcal{A}}) - \rho_\tau(\varepsilon_i) \big)=\sum_{i=1}^n g_{\tau}(\varepsilon_i) \mathbb{X}_{i,\mathcal{A}}^\top ( \eb_{\mathcal{A}} - \eb_{\mathcal{A}}^0) + 2^{-1} h_{\tau}(\varepsilon_i) (\mathbb{X}_{i,\mathcal{A}}^\top ( \eb_{\mathcal{A}} - \eb_{\mathcal{A}}^0))^2 +o_\PP((\mathbb{X}_{i,\mathcal{A}}^\top ( \eb_{\mathcal{A}} - \eb_{\mathcal{A}}^0))^2)$.
 	By similar arguments as for the proof of (\ref{E8.2}), using assumptions (A1), (A2) and since $\eb_{\mathcal{A}} - \eb_{\mathcal{A}}^0 = O(n^{-1/2})$, we deduce: $
 	\sum_{i=1}^n g_{\tau}(\varepsilon_i) \mathbb{X}_{i,\mathcal{A}}^\top ( \eb_{\mathcal{A}} - \eb_{\mathcal{A}}^0)=O_\PP\big(\big(  n^{1/2}( \eb_{\mathcal{A}} - \eb_{\mathcal{A}}^0)^\top n^{-1}\sum_{i=1}^n \mathbb{X}_{i,\mathcal{A}} \mathbb{X}_{i,\mathcal{A}}^\top $ $\cdot n^{1/2}( \eb_{\mathcal{A}} - \eb_{\mathcal{A}}^0) \sigma_{g_{\tau}}^2\big)^{1/2}\big)=O_\PP(1)$ and $
 	\sum_{i=1}^n  h_{\tau}(\varepsilon_i) (\mathbb{X}_{i,\mathcal{A}}^\top ( \eb_{\mathcal{A}} - \eb_{\mathcal{A}}^0))^2$ $ = \mu_{h_{\tau}}  n^{1/2}( \eb_{\mathcal{A}} - \eb_{\mathcal{A}}^0)^\top n^{-1}\sum_{i=1}^n \mathbb{X}_{i,\mathcal{A}} \mathbb{X}_{i,\mathcal{A}}^\top  n^{1/2}( \eb_{\mathcal{A}} - \eb_{\mathcal{A}}^0)+ o_\PP(1) =O_\PP(1)$. We obtain then:
 	\begin{equation}
 	\label{la1}
 	\sum_{i=1}^n \big( \rho_{\tau}(Y_i-\mathbb{X}_{i,\mathcal{A}}^\top\eb_{\mathcal{A}}) - \rho_\tau(\varepsilon_i) \big)=O_\PP(1).
 	\end{equation}
 	Similarly, using also  $\eb_{\mathcal{A}^c} = O(n^{-1/2})$, we prove:
 	\begin{equation}
 	\label{la2}
 	\sum_{i=1}^n \big( \rho_{\tau}(Y_i-\mathbb{X}_i^\top\eb) - \rho_\tau(\varepsilon_i) \big)=O_\PP(1).
 	\end{equation}
 	Since $n^{1/2} \widetilde{\eb}_{n;j}=O_\PP(1)$ for any $ j \in {\cal A}^c$, $\eb_{\mathcal{A}^c} = O(n^{-1/2})$ and  $n^{  (\gamma+1)/2} \lambda_n \underset{n \rightarrow +\infty}{\longrightarrow} \infty$, then the last term of (\ref{E8.7}) can be written: $
  n^{(\gamma+1)/2}  \lambda_n  \sum_{j=p^0+1}^p \| n^{1/2} \widetilde{\eb}_{n;j} \|_2 ^{-\gamma} n^{1/2} \| \eb_j \|_2 \overset{\mathbb{P}}{\underset{n \rightarrow \infty}{\longrightarrow}} \infty$. 
 Combining this last relation together with (\ref{la1}) and (\ref{la2}), we obtain for   (\ref{E8.7}): $
 n \big(	\mathcal{Q}_n((\eb_{\mathcal{A}},\textbf{0}_{r-r^0}))-\mathcal{Q}_n((\eb_{\mathcal{A}},\eb_{\mathcal{A}^c}))\big) \overset{\mathbb{P}}{\underset{n \rightarrow \infty}{\longrightarrow}} -\infty$.
 This implies $	\underset{n \rightarrow \infty}{\text{lim}} \mathbb{P}[\mathcal{A}^c \subseteq \widehat{\mathcal{A}}^c_n] = 1$, from where  $
 	\underset{n \rightarrow \infty}{\text{lim}} \mathbb{P}[\widehat{\mathcal{A}}_n \subseteq \mathcal{A}]= 1$.
  	On the other hand,  for any $ j \in \mathcal{A} $, by Theorem \ref{T1}, $
 	n^{1/2}(\widehat{\eb}_{n;j}-\eb_j^0) \overset{\mathcal{L}}{\underset{n \rightarrow \infty}{\longrightarrow}} \mathcal{N}\big(0,\sigma^2_{g_{\tau}}\mu^{-2}_{h_{\tau}} \eU_{\mathcal{A}_j}^{-1}\big)$, 
 	with $\eU_{\mathcal{A}_j}$ the sub-matrix of $\eU$ with the index $\{d_{j-1}+1,...,d_j\}$.
 Since $\| \eb_j^0 \|_2 \ne 0 $ then  $
 	\underset{n \rightarrow \infty}{\text{lim}} \mathbb{P}[\mathcal{A} \subseteq \widehat{\mathcal{A}}_n] = 1$ which  together with 
 	 $
 	\underset{n \rightarrow \infty}{\text{lim}} \mathbb{P}[\widehat{\mathcal{A}}_n \subseteq \mathcal{A}]= 1$ imply the theorem.
 \end{proof}
 \subsubsection{Proofs of Subsection \ref{subsect_exp_pvar} }
 \label{proofs_subsect_7.1.2}
 Recall first a result given in  \cite{Ciuperca.2021} needed in the following proofs.
 \begin{lemma}[Theorem 2.1.(i) of \cite{Ciuperca.2021}]
 	\label{L2}
 	Under assumptions (A1), (A2), (A3), (A4), we have: $
 	\| \widetilde{\eb}_n - \eb^0 \|_2 = O_\PP\big((p/n)^{1/2}\big)$.
  	More precisely, for $\textbf{u} \in \mathbb{R}^r$   such that $ \| \textbf{u} \|_2 =1$, then for a $B>0$ large enough, we have: $0 <
 	\mathcal{G}_n\big(\eb^0+B  {(p/n)}^{1/2} \textbf{u}\big) - \mathcal{G}_n(\eb^0 ) =O_\PP(B^2 p)$. 
 \end{lemma}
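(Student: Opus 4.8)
The plan is to exploit the convexity of $\mathcal{G}_n$, since $\rho_\tau$ is convex, and to localise the minimiser by a sphere argument in the spirit of the proof of Lemma \ref{L1} and of \cite{Wu.Liu.2009}, but now with the radius $(p/n)^{1/2}$ in place of $n^{-1/2}$. Concretely, it suffices to show that for every $\epsilon>0$ there is $B>0$ large enough so that, with probability at least $1-\epsilon$, one has $\mathcal{G}_n(\eb^0+B(p/n)^{1/2}\textbf{u})-\mathcal{G}_n(\eb^0)>0$ simultaneously for all $\textbf{u}$ with $\|\textbf{u}\|_2=1$. By convexity of $\textbf{v}\mapsto\mathcal{G}_n(\eb^0+\textbf{v})$, if this difference vanished nowhere on the sphere but the minimiser lay outside the ball, then restricting to the segment joining $\eb^0$ to $\widetilde{\eb}_n$ would force a non-positive value on the sphere, a contradiction; hence positivity on the whole sphere yields $\|\widetilde{\eb}_n-\eb^0\|_2=O_\PP((p/n)^{1/2})$.

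First I would set $t_i\equiv B(p/n)^{1/2}\mathbb{X}_i^\top\textbf{u}$ and note that assumptions (A2) and (A4), through relation (\ref{tl}), give $\max_{1\leq i\leq n}|t_i|\to 0$. This uniform smallness lets me write the decomposition $\rho_\tau(\varepsilon_i-t_i)-\rho_\tau(\varepsilon_i)=g_\tau(\varepsilon_i)t_i+\tfrac12 h_\tau(\varepsilon_i)t_i^2+R_i$, where, because $\rho_\tau$ is quadratic on each half-line, the remainder $R_i$ vanishes unless $\varepsilon_i$ lies between $0$ and $t_i$, in which case $|R_i|=O(t_i^2)$ on an event of probability $O(|t_i|)$. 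Summing over $i$ produces a linear term, a quadratic term, and a remainder, which I control separately.

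For the linear term I would use $\eE[g_\tau(\varepsilon_i)]=0$ from (A1) together with independence to get $\eE\big\|\sum_i g_\tau(\varepsilon_i)\mathbb{X}_i\big\|_2^2=\sigma^2_{g_{\tau}}\sum_i\|\mathbb{X}_i\|_2^2\leq \sigma^2_{g_{\tau}}\,n\,r\,M_0$ by (A3); Cauchy--Schwarz then yields $\big|\sum_i g_\tau(\varepsilon_i)t_i\big|\leq B(p/n)^{1/2}\big\|\sum_i g_\tau(\varepsilon_i)\mathbb{X}_i\big\|_2=O_\PP(B(pr)^{1/2})=O_\PP(Bp)$ under bounded group sizes $r=O(p)$. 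For the quadratic term I would pass to the mean, using the $q=2$ instance of (\ref{eq2Lq}), so that $\sum_i\eE[\rho_\tau(\varepsilon_i-t_i)-\rho_\tau(\varepsilon_i)]=\tfrac12\mu_{h_{\tau}}B^2 p\,\textbf{u}^\top\eU_n\textbf{u}\,(1+o(1))\geq \tfrac12\mu_{h_{\tau}}m_0 B^2 p\,(1+o(1))$, the lower bound being uniform in $\textbf{u}$ since $\delta_{\min}(\eU_n)\geq m_0$, while the fluctuation of this term around its mean has variance $O(B^2 p)$ and is therefore $O_\PP(Bp^{1/2})$. Finally $\sum_i\eE|R_i|=O\big(\sum_i|t_i|^3\big)=O(B^3 p^{3/2}r^{1/2}n^{-1/2})$, which is $o(B^2 p)$ precisely because $c<1/2$ forces $p\,n^{-1/2}\to 0$; this is where assumption (A4) is essential.

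Combining these estimates gives $\mathcal{G}_n(\eb^0+B(p/n)^{1/2}\textbf{u})-\mathcal{G}_n(\eb^0)\geq \tfrac12\mu_{h_{\tau}}m_0 B^2 p\,(1+o_\PP(1))-O_\PP(Bp)$, whose right-hand side is positive for $B$ large because the $B^2$ term dominates the $B$ term; the same estimate also shows the whole difference is $O_\PP(B^2 p)$, which is the sharpened form of the statement. The hard part will be the uniformity over the sphere $\{\|\textbf{u}\|_2=1\}$ in the growing dimension $r$: the supremum of the linear term over the sphere is automatically controlled, being exactly $\big\|\sum_i g_\tau(\varepsilon_i)\mathbb{X}_i\big\|_2$, but the fluctuation of the quadratic term must be made uniform in $\textbf{u}$. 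I would handle this by a covering/chaining argument over the unit sphere of $\R^r$, bounding the increments of the process $\textbf{u}\mapsto\sum_i(h_\tau(\varepsilon_i)-\mu_{h_{\tau}})(\mathbb{X}_i^\top\textbf{u})^2$ using the boundedness of the design from (A2) and the moment condition in (A1).
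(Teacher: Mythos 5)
The paper never actually proves this lemma: it is imported verbatim as Theorem 2.1(i) of \cite{Ciuperca.2021}, so there is no internal proof to compare against line by line. The closest internal benchmarks are the proofs of Lemma \ref{L1} (fixed $p$) and of Theorem \ref{Th2.1_CSDA}(i) (the $L_q$ version, which at $q=2$ is this statement up to the choice of norm), and your argument follows essentially the same template as those: localisation on the sphere of radius $B(p/n)^{1/2}$ via convexity, a second-order expansion of the loss (exact in your case, since $\rho_\tau$ is piecewise quadratic, with remainder supported on the sign-change events $\{|\varepsilon_i|\le|t_i|\}$), a lower bound of order $\mu_{h_{\tau}}m_0B^2p$ on the mean term via $\delta_{\min}(\eU_n)\ge m_0$, and moment bounds showing every other contribution is of smaller order, with (A4), i.e.\ $c<1/2$, entering exactly where you say it does. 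Your estimates for the linear term (uniform over the sphere, $O_\PP(B(pr)^{1/2})$), for the mean quadratic term, and for the pointwise remainder ($O(B^3p^{3/2}r^{1/2}n^{-1/2})$) are all correct, granting the convention $r=O(p)$ (bounded group sizes), which the paper itself uses implicitly to derive (\ref{tl}).

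The one substantive gap is in the uniformity step that you yourself flag as the hard part. You propose a covering/chaining argument only for the process $\textbf{u}\mapsto\sum_i(h_\tau(\varepsilon_i)-\mu_{h_{\tau}})(\mathbb{X}_i^\top\textbf{u})^2$; in fact for that term chaining is not even needed, since the Frobenius norm of $\sum_i(h_\tau(\varepsilon_i)-\mu_{h_{\tau}})\mathbb{X}_i\mathbb{X}_i^\top$ is $O_\PP(n^{1/2}r)$, which after multiplication by $B^2p/n$ is $o_\PP(B^2p)$ because $rn^{-1/2}\to0$. The real difficulty is the remainder $\sum_iR_i(\textbf{u})$, which also depends on $\textbf{u}$ and which you only control pointwise (Markov at a fixed $\textbf{u}$). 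The crude uniform bound, replacing $|t_i(\textbf{u})|$ by $T_i\equiv B(p/n)^{1/2}\|\mathbb{X}_i\|_2$ inside the indicator, costs a factor of $r$: one gets $\eE[\sup_{\textbf{u}}\sum_i|R_i(\textbf{u})|]=O(B^3p^{3/2}r^{3/2}n^{-1/2})$ rather than $O(B^3p^{3/2}r^{1/2}n^{-1/2})$, because $\sum_iT_i^2\asymp B^2pr$ while $\sum_it_i^2(\textbf{u})\le B^2pM_0$; with $r\asymp p$ this is $o(B^2p)$ only when $c<1/4$. So on the range $c\in[1/4,1/2)$ the remainder process needs the same net-plus-Bernstein treatment you sketch for the quadratic fluctuation (it does go through, since the summands are bounded and a net of the sphere has cardinality $e^{O(r)}$ with $r=o(n^{1/2})$), and without it your convexity reduction — which requires positivity of the infimum over the sphere, not at a fixed $\textbf{u}$ — is not yet closed. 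To be fair, the paper's own analogous proofs establish only pointwise-in-$\textbf{u}$ bounds and then assert the infimum statement, so your sketch is, if anything, more scrupulous than the internal standard it is being measured against.
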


 \begin{proof}[\textbf{Proof of Theorem \ref{T3}}]
 	We show that, for all $  \epsilon >0 $, there exists $ B_{\epsilon} >0$ large enough such that, for  $n$ large enough:
 	\begin{equation}
 	\label{qqn}
 	\mathbb{P}\big[ \underset{\textbf{u} \in \mathbb{R}^r, \| \textbf{u} \|_2 =1 }{\inf} \mathcal{Q}_n\big(\eb^0+B_{\epsilon} {(p/n)}^{1/2} \textbf{u}\big) > \mathcal{Q}_n(\eb^0 )\big] \geq 1- \epsilon.
 	\end{equation}
 	Let $\textbf{u} \in \mathbb{R}^r$ be such that $ \| \textbf{u} \|_2 =1$ and $B>0$. In order to prove   relation (\ref{qqn}), we will study:
 	\begin{equation}
 	\label{E8.8}
 	\begin{split}
 	\mathcal{Q}_n(\eb^0+B {(p/n)}^{1/2} \textbf{u})-\mathcal{Q}_n(\eb^0 )= & n^{-1} ( \mathcal{G}_n(\eb^0+B {(p/n)}^{1/2} \textbf{u}) - \mathcal{G}_n(\eb^0 ) )\\
 	&\quad  + \lambda_n \sum_{j=1}^p \widehat{\omega}_{n;j} ( \| \eb^0_j+B {(p/n)}^{1/2} \textbf{u}_j \| - \| \eb^0_j \| ).
 	\end{split}
 	\end{equation}
 Let us first study the penalty of the right-hand side of relation (\ref{E8.8}).  The following inequality holds, with probability 1: $
 	\underset{j \in \mathcal{A}}{\min} \| \eb^0_j \|_2 \leq  \underset{j \in \mathcal{A}}{\max} \| \widetilde{\eb}_{n;j} - \eb^0_j \|_2  + \underset{j \in \mathcal{A}}{\min} \| \widetilde{\eb}_{n;j} \| _2$. By Lemma \ref{L2}, $\underset{j \in \mathcal{A}}{\max} \| \widetilde{\eb}_{n;j} - \eb^0_j \|_2 =O_\PP(n^{(c-1)/2})$ from which,  since $\alpha > (c-1)/2$ by supposition (A5), it results $n^{-\alpha} \underset{j \in \mathcal{A}}{\max} \| \widetilde{\eb}_{n;j} - \eb^0_j \|_2 =o_\PP(1)$.
 	On the other hand, still using assumption (A5), we have:
 	$
 	K \leq h_0 n^{-\alpha} \leq n^{-\alpha} \underset{j \in \mathcal{A}}{\max} \| \widetilde{\eb}_{n;j} - \eb^0_j \|_2 + n^{-\alpha} \underset{j \in \mathcal{A}}{\min} \| \widetilde{\eb}_{n;j} \|_2 .
 	$
 Then, the two last relations imply:
 	\begin{equation}
 	\label{E8.9}
 	\underset{n \rightarrow +\infty}{\lim} \mathbb{P} \big[\underset{j \in \mathcal{A}}{\min} \| \widetilde{\eb}_{n;j} \|_2  > \frac{Kn^{\alpha}}{2}\big]=1.
 	\end{equation}
 	Taking into account relation (\ref{E8.9}) and  the Cauchy-Schwarz inequality, we get for a constant $B >0$, with probability converging to one, that:
 	\begin{align}
 &	\sum_{j=1}^p \| \widetilde{\eb}_{n;j} \|_2^{-\gamma} ( \| \eb^0_j+B {(p/n)}^{1/2} \textbf{u}_j \|_2 - \| \eb^0_j \|_2 ) \geq  \sum_{j=1}^{p^0} \| \widetilde{\eb}_{n;j} \|_2^{-\gamma} ( \| \eb^0_j+B {(p/n)}^{1/2} \textbf{u}_j \|_2 - \| \eb^0_j \|_2 ) \nonumber \\ 
 &	\geq  -B {(p/n)}^{1/2} \sum_{j=1}^{p^0} \| \widetilde{\eb}_{n;j} \|_2^{-\gamma}  \| \textbf{u}_j \|_2  \geq  -B{(p/n)}^{1/2} (\sum_{j=1}^{p^0} \| \widetilde{\eb}_{n;j} \|_2^{-2\gamma})^{\frac{1}{2}}  \| \textbf{u}\|_2  \nonumber \\ 
 &  \geq  -B {(p/n)}^{1/2} (p^0)^{1/2} \underset{j \in \mathcal{A}}{\min} \| \widetilde{\eb}_{n;j} \|_2^{-\gamma} \geq  -B {(p/n)}^{1/2}  p^{1/2} ( 2^{-1}Kn^{\alpha} )^{-\gamma}.
 \label{iel}
 	\end{align}
 On the other hand, using  (\ref{E4.1}) together with    (\ref{iel}) we obtain:
 $ 	\lambda_n \sum_{j=1}^p \widehat{\omega}_{n;j} ( \| \eb^0_j+B  ({p}/{n})^{1/2}  \textbf{u}_j \|_2 - \| \eb^0_j \|_2 )  \geq - B  (p/n)^{-1/2} p^{1/2}O_\PP \big( \lambda_n  n^{-\alpha \gamma}\big)  =- B  p n^{-1} O_\PP \big( \lambda_n   n^{1/2-\alpha \gamma}\big) = o_\PP\big(Bp/n\big)$. Thus,  
 	\begin{equation}
 	\label{tt}
 	\lambda_n \sum_{j=1}^p \widehat{\omega}_{n;j} ( \| \eb^0_j+B ({p}/{n})^{1/2} \textbf{u}_j \|_2 - \| \eb^0_j \|_2 )\geq - o_\PP (B p/n).
 	\end{equation}
 On the other hand, by Lemma \ref{L2}:
 	$
 	0 < n^{-1}\big( \mathcal{G}_n(\eb^0+B ({p}/{n})^{1/2} \textbf{u}) - \mathcal{G}_n(\eb^0 ) \big)=O_\PP(B^2  p n^{-1})$.
  	Therefore, taking into account (\ref{tt}) by choosing $B$ and  $n$ large enough, we deduce that the first term of the right-hand side of (\ref{E8.8}) dominates the right-hand side and relation (\ref{qqn}) follows.
 \end{proof}
 \begin{proof}[\textbf{Proof of Theorem \ref{T4}}]
 	By Theorem  \ref{T3}, we have that $\widehat{\eb}_n$ belongs, with a probability converging to one, to the set: $
 	\mathcal{V}_p(\eb^0)=\{ \eb \in \R^r , \| \eb - \eb^0 \|_2 \leq B (p/n)^{1/2} \}$, 
 	for $B >0$ large enough.
 	Consider also the parameter set: $
 	\mathcal{W}_n=\{ \eb \in \mathcal{V}_p(\eb^0) , \| \eb_{\mathcal{A}^c} \|_2 >0 \}$.
 	If $\eb \in \mathcal{W}_n$, then $p>p^0$.
 	In order to show the property of sparsity, we will first show:
 	\begin{equation}
 	\label{lolo}
 	\underset{n \rightarrow \infty}{\lim} \mathbb{P}[\widehat{\eb}_n \in \mathcal{W}_n]=0.
 	\end{equation}
 	For this, we consider two vectors $\eb=(\eb_{\mathcal{A}},\eb_{\mathcal{A}^c}) \in \mathcal{W}_n$ and $\eb^{(1)}=(\eb_{\mathcal{A}}^{(1)},\eb_{\mathcal{A}^c}^{(1)})$ such that $\eb_{\mathcal{A}}^{(1)}=\eb_{\mathcal{A}}$ and $\eb_{\mathcal{A}^c}^{(1)}=\bm{0}_{r-r^0}$.
 	Then, in order to prove relation (\ref{lolo}), we will study the difference 
 	\begin{equation}
 	\label{E8.10}
   \mathcal{Q}_n(\eb)-\mathcal{Q}_n(\eb^{(1)} )= n^{-1} ( \mathcal{G}_n(\eb) - \mathcal{G}_n(\eb^{(1)}) ) + \lambda_n \sum_{j=p^0+1}^p  \widehat{\omega}_{n;j} \| \eb_j \|_2.
 	\end{equation}
 	By elementary calculations, in the same way as for relation (\ref{E8}), we can rewrite:
 	\begin{equation*}
 	\begin{split}
 &	\mathcal{G}_n(\eb) - \mathcal{G}_n(\eb^{(1)})   =    \sum_{i=1}^n \big( \rho_{\tau}(\varepsilon_i-\mathbb{X}_{i,\mathcal{A}}^\top (\eb_{\mathcal{A}}-\eb_{\mathcal{A}}^0 )) - \rho_{\tau}(\varepsilon_i-\mathbb{X}_i^\top (\eb_{\mathcal{A}}-\eb_{\mathcal{A}}^0 ,\eb_{\mathcal{A}^c}))\big) \\
     &  =   \sum_{i=1}^n \big(g_{\tau}(\varepsilon_i) \mathbb{X}_{i,\mathcal{A}}^\top (\eb_{\mathcal{A}}-\eb_{\mathcal{A}}^0 ) + \frac{h_{\tau}(\varepsilon_i)}{2} (\mathbb{X}_{i,\mathcal{A}}^\top (\eb_{\mathcal{A}}-\eb_{\mathcal{A}}^0 ))^2 
     + o_\PP((\mathbb{X}_{i,\mathcal{A}}^\top (\eb_{\mathcal{A}}-\eb_{\mathcal{A}}^0 ))^2) \big) \\
    &  - \sum_{i=1}^n \big(g_{\tau}(\varepsilon_i) \mathbb{X}_i^\top (\eb_{\mathcal{A}}-\eb_{\mathcal{A}}^0 ,\eb_{\mathcal{A}^c}) + \frac{h_{\tau}(\varepsilon_i)}{2} (\mathbb{X}_i^\top (\eb_{\mathcal{A}}-\eb_{\mathcal{A}}^0 ,\eb_{\mathcal{A}^c}))^2 
        +o_\PP((\mathbb{X}_i^\top (\eb_{\mathcal{A}}-\eb_{\mathcal{A}}^0 ,\eb_{\mathcal{A}^c}))^2) \big).
 	\end{split}
 	\end{equation*}
 By the Markov inequality:
 	$
 	 n^{-1} \sum_{i=1}^n g_{\tau}(\varepsilon_i) \mathbb{X}_{i,\mathcal{A}}^\top   (\eb_{\mathcal{A}}-\eb_{\mathcal{A}}^0 )  =    \mathbb{E}\big[  n^{-1} \sum_{i=1}^n g_{\tau}(\varepsilon_i) \mathbb{X}_{i,\mathcal{A}}^\top (\eb_{\mathcal{A}}-\eb_{\mathcal{A}}^0 )\big]  + O_\PP( \big( \Var \big[ n^{-1} \sum_{i=1}^n g_{\tau}(\varepsilon_i) \mathbb{X}_{i,\mathcal{A}}^\top (\eb_{\mathcal{A}}-\eb_{\mathcal{A}}^0 )\big]\big)^{1/2}).$ 
 	By assumption (A1) we have $\mathbb{E}[g_{\tau}(\varepsilon_i)]=0$. On the other hand,  by assumptions (A1), (A3) and since $\| \eb_{\mathcal{A}}-\eb_{\mathcal{A}}^0 \|_2 \leq C  (p/n)^{1/2}$, we have: $  	\Var\big[ n^{-1}\sum_{i=1}^n g_{\tau}(\varepsilon_i) \mathbb{X}_{i,\mathcal{A}}^\top (\eb_{\mathcal{A}}-\eb_{\mathcal{A}}^0 )\big]=  n^{-2} \sigma_{g_{\tau}}^2 \sum_{i=1}^n (\eb_{\mathcal{A}}-\eb_{\mathcal{A}}^0)^\top  \mathbb{X}_{i,\mathcal{A}} \mathbb{X}_{i,\mathcal{A}}^\top ( \eb_{\mathcal{A}}-\eb_{\mathcal{A}}^0 )  = O_\PP\big(  p n^{-2} \big)$. 
 	Thus: 
 	$
 	 n^{-1} \sum_{i=1}^n g_{\tau}(\varepsilon_i) \mathbb{X}_{i,\mathcal{A}}^\top   (\eb_{\mathcal{A}}-\eb_{\mathcal{A}}^0 )=O_\PP\big( p^{1/2}n\big)$.
  	Likewise, we find:  $ 	n^{-1} \sum_{i=1}^n g_{\tau}(\varepsilon_i) \mathbb{X}_i^\top (\eb_{\mathcal{A}}-\eb_{\mathcal{A}}^0 ,\eb_{\mathcal{A}^c})=O_\PP\big( p^{1/2}n\big)$.
 	Moreover, since $h_{\tau}(\varepsilon_i)$ is bounded with  probability one, using assumption (A3), we have:
 	$
 	0< n^{-1} \sum_{i=1}^n  h_{\tau}(\varepsilon_i)  (\mathbb{X}_{i,\mathcal{A}}^\top (\eb_{\mathcal{A}}-\eb_{\mathcal{A}}^0 ))^2 = O_\PP( \| \eb_{\mathcal{A}}-\eb_{\mathcal{A}}^0 \|_2 ^2 ) = O_\PP\big( pn^{-1}\big)$.
  	Likewise, we obtain: $ 0< n^{-1} \sum_{i=1}^n  2^{-1}h_{\tau}(\varepsilon_i) (\mathbb{X}_i^\top (\eb_{\mathcal{A}}-\eb_{\mathcal{A}}^0 ,\eb_{\mathcal{A}^c}))^2=O_\PP (p/n)$. 
 	Finally, we have shown that:
 	\begin{equation}
 	\label{E8.11}
 	n^{-1} ( \mathcal{G}_n(\eb) - \mathcal{G}_n(\eb^{(1)}) ) = O_\PP \big(p/n\big).
 	\end{equation}
 	We are now interested of the penalty term of the right-hand side of (\ref{E8.10}). By  Lemma \ref{L2},  for all $j \in \mathcal{A}^c$, we have: $
 	\widehat{\omega}_{n:j}= \| \widetilde{\eb}_{n:j} \|_2 ^{-\gamma} =  \| \widetilde{\eb}_{n:j} - \eb^0_j \|_2 ^{-\gamma} = O_\PP\big(\big( {p}/{n}\big)^{-{\gamma}/{2}}\big)$
 	and since $\eb \in \mathcal{W}_n$ we get:
 	$
 	0< \| \eb_j \|_2 = O_\PP\big( (p/n)^{1/2}\big)$.
 	Therefore, since $p>p^0$, we obtain:
 	\begin{equation}
 	\label{peni}
 	0<\sum_{j=p^0+1}^p \lambda_n \widehat{\omega}_{n;j} \| \eb_j \|_2 = \sum_{j=p^0+1}^p O_\PP\big(\lambda_n\big(p/n\big)^{(1-\gamma)/2}\big).
 	\end{equation}
 	Relations (\ref{E8.11}) and (\ref{peni}) imply for relation (\ref{E8.10}):
 	\begin{equation}
 	\label{16b}
 	 \mathcal{Q}_n(\eb)-\mathcal{Q}_n(\eb^{(1)} )=O_\PP\big(p/n\big) +  (p-p^0) O_\PP\big(\lambda_n\big(p/n\big)^{(1-\gamma)/2}\big).
 	\end{equation}
 Using condition (\ref{E4.2}),  we have for (\ref{16b}) that the second term of the right-hand side that dominates, and therefore:
 	$ \mathcal{Q}_n(\eb)-\mathcal{Q}_n(\eb^{(1)} )= (p-p^0) O_\PP\big(\lambda_n\big( {p}/{n}\big)^{(1-\gamma)/2}\big)$. On the other hand, by   (\ref{E8.10}) and (\ref{E8.11}), we have: $
 	 \mathcal{Q}_n(\ebo)-\mathcal{Q}_n(\eb^{(1)} )=O_\PP\big( {p}/{n}\big)$. Therefore, using (\ref{E4.2}), we get :
 	\begin{equation}
 	\label{gl}
 	 \mathcal{Q}_n(\eb)-\mathcal{Q}_n(\eb^{(1)} ) \gg  \mathcal{Q}_n(\ebo)-\mathcal{Q}_n(\eb^{(1)} ),
 	\end{equation}
 	with probability converging to one. On the other hand, the estimator $\widehat{\eb}_n$ is the minimizer of $\mathcal{Q}_n(\eb)$. Then, relation (\ref{gl}) implies relation (\ref{lolo}), which in turn implies:
 \begin{equation}
  \label{pp}
    \underset{n \rightarrow \infty}{\text{lim}} \mathbb{P}[\widehat{\mathcal{A}}_n \subseteq \mathcal{A}]=1.
 \end{equation}
 	 Finally, by Theorem \ref{T3} and a similar  relation to (\ref{E8.9}) we have:
 $ 	\underset{n \rightarrow \infty}{\text{lim}} \mathbb{P}[\underset{j \in \mathcal{A}}{\min} \| \widehat{\eb}_{n;j} \|_2 >0 ]=1$ 
  which implies $\underset{n \rightarrow \infty}{\text{lim}} \mathbb{P}[\widehat{\mathcal{A}}_n \supseteq \mathcal{A}]=1$ and which  together with (\ref{pp}) complete the proof. 
 \end{proof}
 \begin{proof}[\textbf{Proof of Theorem \ref{T5}}]
 	By Theorems \ref{T3} and \ref{T4}, we consider the parameter $\eb$ under the form: $
 	 \eb=\eb^0+ (p/n)^{1/2}\bm{\eta} $, with $ \bm{\eta}=(\bm{\eta}_{\mathcal{A}},\bm{\eta}_{\mathcal{A}^c}) $,  $\bm{\eta}_{\mathcal{A}^c}=\textbf{0}_{r-r^0} $, $ \| \bm{\eta}_{\mathcal{A}} \|_2 \leq C$. Then, we will study:
 	\begin{equation}
 	\label{E8.12}
 	\mathcal{Q}_n(\eb^0+ (p/n)^{1/2} \bm{\eta})-\mathcal{Q}_n(\eb^0 )=n^{-1} \sum_{i=1}^n [ \rho_{\tau}(\varepsilon_i-(p/n)^{1/2} \mathbb{X}_i^\top \bm{\eta} ) - \rho_{\tau}(\varepsilon_i) ] + \mathcal{P},
 	\end{equation}
 	with $\mathcal{P}=\sum_{j=1}^{p^0} \lambda_n \widehat{\omega}_{n;j} (\| \eb_j \|_2-\| \eb^0_j \|_2)$.
  	For all $j \in {1,...,p^0}$, we have that $| \| \eb_j \|_2-\| \eb^0_j \|_2 | \leq  (p/n)^{1/2} \| \bm{\eta}_j \|_2$. 
 	On the other hand, using assumption (A5), we have $\| \eb^0_j \|_2 \geq h_0=O(n^{\alpha})$ and  $\| \widetilde{\eb}_{n;j} \|_2-\| \eb^0_j \|_2=O_\PP((p/n)^{1/2})=O_\PP(n^{ (c-1)/2})$. Since, by the same assumption, $\alpha > (c-1)/2 $, then, we deduce: $	\widehat{\omega}_{n;j}=O_\PP(n^{-\alpha \gamma})$, for all $j \in \mathcal{A}$. 
 We have then, with probability equal to 1: $
 	| \mathcal{P} | \leq \lambda_n \sum_{j=1}^{p^0}  \widehat{\omega}_{n;j} | \| \eb_j \|_2-\| \eb^0_j \|_2 | \leq 
 	\lambda_n (p/n)^{1/2} \sum_{j=1}^{p^0}  \widehat{\omega}_{n;j}  \|\bm{\eta}_j\|_2$. Applying the  Cauchy-Schwarz inequality  and  condition  (\ref{E4.1}), we get with probability one: $| \mathcal{P} | \leq \lambda_n (p/n)^{1/2}  \big(\sum_{j=1}^{p^0} \widehat{\omega}^2_{n;j} \big)^{1/2} \big(\sum_{j=1}^{p^0} \|\bm{\eta}_j\|^2_2 \big) =\lambda_n (p/n)^{1/2}  \big(\sum_{j=1}^{p^0} \widehat{\omega}^2_{n;j} \big)^{1/2}  \| \bm{\eta}_{\mathcal{A}}\|_2 $. This relation together with condition (\ref{E4.1}) and with $\widehat{\omega}_{n;j}=O_\PP(n^{-\alpha \gamma})$, imply with probability converging to one:  $ | \mathcal{P} |\leq C \lambda_n (p/n)^{1/2}  \big(\sum_{j=1}^{p^0}  n^{-2 \alpha \gamma} \big)^{1/2} \leq C (p/n)^{1/2} (p^0)^{1/2} \lambda_n  n^{-\alpha \gamma}
 	 \leq C (p/n)^{1/2} p^{1/2} \lambda_n  n^{-\alpha \gamma} $ $=  o\big({p}/{n}\big)$.
 	We are now interested in the first term of the right-hand side of (\ref{E8.12}). By assumption (A3) and the Markov inequality, similarly as in the proof of Theorem \ref{T1}, we have:
 	\begin{align*}
 	& \frac{1}{n} \sum_{i=1}^n \big( \rho_{\tau}(\varepsilon_i-\sqrt{\frac{p}{n}} \mathbb{X}_i^\top \bm{\eta} ) - \rho_{\tau}(\varepsilon_i)\big)  \\
  	= & -\frac{1}{n} \sqrt{\frac{p}{n}}  \sum_{i=1}^n g_{\tau}(\varepsilon_i)( \mathbb{X}_{i,\mathcal{A}}^\top \bm{\eta}_{\mathcal{A}})  + \big[  \frac{1}{2n} \frac{p}{n}  \sum_{i=1}^n  \big(\mathbb{E}[h_{\tau}(\varepsilon_i)]  +  h_{\tau}(\varepsilon_i) - \mathbb{E}[h_{\tau}(\varepsilon_i)] \big)\bm{\eta}_{\mathcal{A}} ^\top \mathbb{X}_{i,\mathcal{A}} \mathbb{X}_{i,\mathcal{A}}^\top \bm{\eta}_{\mathcal{A}} \big]( 1+o_\PP(1))   \\ 
 	= & -\frac{1}{n} \sqrt{\frac{p}{n}}  \sum_{i=1}^n g_{\tau}(\varepsilon_i)( \mathbb{X}_{i,\mathcal{A}}^\top \bm{\eta}_{\mathcal{A}})  +  \big[\frac{1}{2n} \frac{p}{n}  \sum_{i=1}^n  \mathbb{E}[h_{\tau}(\varepsilon_i)] \bm{\eta}_{\mathcal{A}} ^\top \mathbb{X}_{i,\mathcal{A}} \mathbb{X}_{i,\mathcal{A}}^\top \bm{\eta}_{\mathcal{A}} \big]( 1+o_\PP(1))=O_\PP\big(p/n\big).
 	\end{align*}
Thus, the minimization of the right-hand side of (\ref{E8.12}) in respect to $\bm{\eta}$, amounts to minimizing  $n^{-1} \sum_{i=1}^n [ \rho_{\tau}(\varepsilon_i-(p/n)^{1/2} \mathbb{X}_i^\top \bm{\eta} ) - \rho_{\tau}(\varepsilon_i) ]$. Moreover, the minimizer of  
 	$-n^{-1} (p n^{-1})^{1/2}  \sum_{i=1}^n g_{\tau}(\varepsilon_i)$ $\cdot ( \mathbb{X}_{i,\mathcal{A}}^\top \bm{\eta}_{\mathcal{A}})  +  (2n)^{-1} p n^{-1}  \sum_{i=1}^n  \mathbb{E}[h_{\tau}(\varepsilon_i)] $ $\cdot \bm{\eta}_{\mathcal{A}} ^\top \mathbb{X}_{i,\mathcal{A}} \mathbb{X}_{i,\mathcal{A}}^\top \bm{\eta}_{\mathcal{A}} $  verifies: $
 	-n^{-1}(p/n)^{1/2}  \sum_{i=1}^n g_{\tau}(\varepsilon_i) \mathbb{X}_{i,\mathcal{A}}$ $ + \eU_{n,\mathcal{A}}  (p/n)^{1/2}\bm{\eta}_{\mathcal{A}} \mu_{h_{\tau}}=\textbf{0}_{p^0}$, 
 	from where:
 	\begin{equation}
 	\label{E8.14}
 	\sqrt{\frac{p}{n}}\bm{\eta}_{\mathcal{A}}=\frac{\eU_{n,\mathcal{A}}^{-1}}{\mu_{h_{\tau}}} \frac{1}{n} \sum_{i=1}^n g_{\tau}(\varepsilon_i) \mathbb{X}_{i,\mathcal{A}} . 
 	\end{equation}
 	Let $\textbf{u} \in \mathbb{R}^{r^0}$ be such that $\| \textbf{u} \|_2 =1$.
  	In order to study $\bm{\eta}_{\mathcal{A}}$ of  (\ref{E8.14}), let us consider the random variables, for $i=1 , \cdots , n$: $
 	R_i \equiv \mu^{-1}_{h_{\tau}} g_{\tau}(\varepsilon_i) \textbf{u}^\top  \eU_{n,\mathcal{A}}^{-1}  \mathbb{X}_{i,\mathcal{A}} $. 
 Taking into account assumption (A1), we have: $\mathbb{E}[R_i]=0$, $\Var[R_i]=  \sigma_{g_{\tau}}^2 \mu_{h_{\tau}}^{-2}  \textbf{u}^\top \eU_{n,\mathcal{A}}^{-1} \mathbb{X}_{i,\mathcal{A}}  \mathbb{X}_{i,\mathcal{A}}^\top \eU_{n,\mathcal{A}}^{-1} \textbf{u}$. Using assumptions   (A1),   (A3), we obtain: $
 	\Var\big[\sum_{i=1}^n R_i\big]  = n\mu_{h_{\tau}}^{-2}  {\sigma_{g_{\tau}}^2 \textbf{u}^\top \eU_{n,\mathcal{A}}^{-1} \textbf{u}} $.
 	Under assumption (A3), we are in the conditions of the CLT for $(R_i)_{1 \leqslant i \leqslant n}$. Moreover, since $(\widehat{\eb}_n-\eb^0)_{\mathcal{A}} = (p/n)^{1/2}\bm{\eta}_{\mathcal{A}}$, taking into account   (\ref{E8.14}), we deduce: $
 	n^{1/2}(\textbf{u}^\top \eU_{n,\mathcal{A}}^{-1} \textbf{u})^{-1/2} \textbf{u}^\top (\widehat{\eb}_n-\eb^0)_{\mathcal{A}} \overset{\mathcal{L}}{\underset{n \rightarrow \infty}{\longrightarrow}} \mathcal{N}\big(0,  \sigma^2_{g_{\tau}} \mu^{-2}_{h_{\tau}}  \big)$.
 \end{proof}
  \subsection{Proofs of Section \ref{Sect_generalisation}}
 \label{proof_Lq}
 \subsubsection{Proofs of Subsection \ref{subsect_Lq1}}
 \begin{proof}[\textbf{Proof of Theorem \ref{Th2.1_CSDA}}]
 \textit{(i)} We prove that for all $\epsilon >0$, there exists a constant $B_\epsilon >0$  large enough, such that when $n$ is large: $
 \PP \big[ \inf_{\eu \in \R^r, \; \| \eu \|_1 =1} \mathcal{G}_n\big(\ebo+B_\epsilon (p/n)^{1/2} \eu;q\big)>\mathcal{G}_n(\ebo;q)\big] \geq 1-\epsilon $.
Consider $B>0$   a constant to be determined later and   $\eu \in \R^p$ such that $\| \eu \|_1=1$. Then:
 \begin{equation}
 \mathcal{G}_n\big(\ebo+B \sqrt{\frac{p}{n}}  \eu ;q\big) -\mathcal{G}_n(\ebo;q)\equiv \Delta_1+\Delta_2 ,
  \label{eq8Lq}
 \end{equation}
 with $\Delta_1 \equiv \sum^n_{i=1}\big(\rho_\tau\big(\varepsilon_i -B (p/n)^{1/2} \mathbb{X}^\top_i  \eu ;q\big)- \rho_\tau(\varepsilon_i;q ) - \eE\big[ \rho_\tau\big(\varepsilon_i -B (p/n)^{1/2} \eeX^\top_i  \eu ;q\big)- \rho_\tau(\varepsilon_i ;q) \big]\big)$, $\Delta_2 \equiv \sum^n_{i=1} \eE\big[ \rho_\tau\big(\varepsilon_i -B (p/n)^{1/2} \eeX^\top_i  \eu;q \big)- \rho_\tau(\varepsilon_i ;q) \big]$. 
By   relation (\ref{eq2Lq}) together with  assumptions   (A3) and (A4), we have:
 \begin{equation}
 \label{eq3Lq}
 0 < \Delta_2 =\big[\frac{1}{2} \mu_{h_{\tau}(q)} B^2 \frac{p}{n} \sum^n_{i=1}  (\mathbb{X}_i^\top \eu)\big] \big(1+o_\PP(1) \big)=O_\PP(B^2p).
 \end{equation}
For $i=1, \cdots, n$, we consider  the following sequence of random variables: $D_i \equiv \rho_\tau \big(\varepsilon_i - B  (p/n)^{1/2} \eeX_i^\top \eu ;q\big) - \rho_\tau(\varepsilon_i;q) +B (p/n)^{1/2} g_\tau(\varepsilon_i;q) \eeX_i^\top \eu $. Thus, $\Delta_1$ can be written:
 \begin{equation}
 \label{eq7Lq}
 \Delta_1 =\sum^n_{i=1} \big[ -B (p/n)^{1/2} g_\tau(\varepsilon_i;q) \eeX_i^\top \eu  +D_i -\eE [D_i] \big].
 \end{equation}
 On the other hand, using assumptions (A1q), (A3),  we have: $\eE\big[ B (p/n)^{1/2} g_\tau(\varepsilon_i;q) \eeX_i^\top \eu\big]=0$ and $\Var \big[ B (p/n)^{1/2} \sum_{i=1}^n g_\tau(\varepsilon_i;q) \eeX_i^\top \eu\big]=B^2 p n^{-1} \Var[g_\tau(\varepsilon;q)] \eu^\top \sum_{i=1}^n \eeX_i \eeX_i^\top \eu=O(B^2 p)$. Then, using  $
\sum_{i=1}^n B (p/n)^{1/2} g_\tau(\varepsilon_i;q) \eeX_i^\top \eu =\eE\big[ \sum_{i=1}^n B (p/n)^{1/2} g_\tau(\varepsilon_i;q) \eeX_i^\top \eu\big]  +  O_\PP \big( \big(\Var \big[
 \sum_{i=1}^n $ $  B (p/n)^{1/2} g_\tau(\varepsilon_i;q) \eeX_i^\top \eu	\big] \big)^{1/2}\big),
$ we obtain:
 \begin{equation}
 \label{eq3bL}
 B (p/n)^{1/2} \sum_{i=1}^n g_\tau(\varepsilon_i;q) \eeX_i^\top \eu  =O_\PP (B p^{1/2}).
 \end{equation}
By the same reasoning as in the proof of Theorem  \ref{T1}, we get $D_i =O_\PP \big( (p/n)^{1/2} \eeX_i^\top \eu\big)^2$. On the other hand, by the Cauchy-Schwarz inequality, taking also into account assumption (A4) and the fact that $\| \eeX_i\|_2 \leq r^{1/2} \| \eeX_i\|_\infty$, we obtain  $(p/n)^{1/2} \max_{1 \leqslant i \leqslant n} \| \eeX_i\|_2=o(1)$, which implies $\big(  (p/n)^{1/2} \eeX_i^\top \eu\big)^2 \leq (p/n)^{1/2} \big| \eeX_i^\top \eu\big| $, for any $i=1, \cdots, n$. Then, there exists a constant $c_1>0$ such that   $ |D_i| \leq c_1 (p/n)^{1/2}| \eeX_i^\top \eu | $. For the sake of writing clarity, we assume without reducing the generality that $c_1=1$. Then $ | D_i| \e1_{|\varepsilon_i| \leq (p/n)^{1/2} |\eeX_i^\top \eu |} \leq (p/n)^{1/2}| \eeX_i^\top \eu | \e1_{|\varepsilon_i| \leq (p/n)^{1/2} |\eeX_i^\top \eu |}$. 
 Since $(\varepsilon_i)_{1 \leqslant i \leqslant n}$ are independent by assumption (A1q): 
 \begin{align} 
 &\eE\big[ \sum_{i=1}^n (D_i - \eE[D_i])\big]^2 = \sum_{i=1}^n \eE[D_i - \eE[D_i] ]^2 \leq \sum_{i=1}^n \eE[D_i^2] \nonumber \\ 
& =\sum_{i=1}^n \eE[D_i^2 \e1_{|\varepsilon_i| \leq (p/n)^{1/2} |\eeX_i^\top \eu |}]+ \sum_{i=1}^n \eE[D_i^2 \e1_{|\varepsilon_i| > (p/n)^{1/2}|\eeX_i^\top \eu |}]  \equiv T_1+T_2.
 \label{A12}
\end{align}
Using Hölder's inequality, assumptions (A2), (A3),  the fact that $\|\eu \|_2 \leq \|\eu \|_1$, we obtain:
  \begin{align} 
  T_1 & \leq \frac{p}{n} \sum_{i=1}^n  (\eeX_i^\top \eu)^2 \PP \big[ |\varepsilon_i| \leq \sqrt{\frac{p}{n}} |\eeX_i^\top \eu | \big] \leq \frac{p}{n} \sum_{i=1}^n  (\eeX_i^\top \eu)^2 \PP \big[|\varepsilon_i| \leq \sqrt{\frac{p}{n}}  \|\eu \|_1 \max_{1 \leqslant i \leqslant n} \| \eeX_i \|_\infty \big]  \nonumber 
    \end{align}
   \begin{align} 
 & \leq  \frac{p}{n} \sum_{i=1}^n  (\eeX_i^\top \eu)^2 \cdot O \big( F\big(\sqrt{\frac{p}{n}} \max_{1 \leqslant i \leqslant n} \| \eeX_i \|_\infty \big) - F\big(- \sqrt{\frac{p}{n}} \max_{1 \leqslant i \leqslant n} \| \eeX_i \|_\infty \big)\big) \nonumber \\
 & =O\big(p \sqrt{\frac{p}{n}} \big) =o(p).
 \label{A1o}
 \end{align}
 In order to study $T_2$  we get: $
 D_i \e1_{|\varepsilon_i| > (p/n)^{1/2} |\eeX_i^\top \eu |} = h_\tau(\varepsilon_i ; q) \big((p/n)^{1/2}\eeX_i^\top \eu \big)^2 \e1_{|\varepsilon_i| > (p/n)^{1/2} |\eeX_i^\top \eu |} (1+o(1))$. Then: $
 T_2  \leq \big( \big( q (q-1)\big)^2 \sum_{i=1}^n \big|\sqrt{\frac{p}{n}} \eeX_i^\top \eu \big|^4 \int_{|x| > \sqrt{\frac{p}{n}} | \eeX_i^\top \eu |} |x|^{2(q-2)} f(x) dx \big) (1+o(1))$.\\
\noindent  If $1  < q < 2$,  using Hölder's inequality and assumption (A2), we obtain: 
 $ T_2  \leq    \big[ \big( q (q-1)\big)^2 \sum_{i=1}^n \big|(p/n)^{1/2} \eeX_i^\top \eu \big|^{2q} \big] (1+o(1)) = O \big( p^qn^{-(q-1)}\big) $. 
 Then, taking into account relations (\ref{A1o}), (\ref{A12}), we have in the case $q \in (1,2)$:
 \begin{equation}
 \label{eq4Lq}
 \sum_{i=1}^n [D_i -\eE[D_i]]=o_\PP(p^{1/2})+O_\PP\big( p^{1/2}  {p^{(q-1)/2}}{n^{-(q- 1)/2}} \big)=o_\PP(p^{1/2}).
 \end{equation}
 If $q \geq 2$, then $
 T_2  \leq   \big[   \sum_{i=1}^n \big|(p/n)^{1/2} \eeX_i^\top \eu \big|^{4} \big] (1+o(1)) = O \big( p^2n^{-1} \big) $.
  Then, using also  relations (\ref{A12}), (\ref{A1o}),  we obtain:
  \begin{equation}
  \label{eq5Lq}
  \sum_{i=1}^n (D_i -\eE[D_i])=O_\PP\big(   {p}{n^{-1/2}}\big)=o_\PP(p^{1/2}).
  \end{equation}
  Thus relations (\ref{eq4Lq}) and (\ref{eq5Lq}) imply that for all $q >1$: $  \sum_{i=1}^n (D_i -\eE[D_i])=o_\PP(p^{1/2})$. This, together with (\ref{eq3bL}), implies for (\ref{eq7Lq}):
    $ \Delta_1 = - O_\PP(B p^{1/2})+o_\PP(p^{1/2})=-O_\PP(B p^{1/2})$. This last relation, together with relations (\ref{eq8Lq}) and (\ref{eq3Lq}) imply: $
  \mathcal{G}_n\big(\ebo+B  (p/n)^{1/2} \eu ;q\big) -\mathcal{G}_n(\ebo;q) =O_\PP(B^2 p )-O_\PP(B p^{1/2}) >0$,   with probability converging to 1 as $n \rightarrow \infty$, for $B$ large enough. \\
\noindent   \textit{(ii)} For $\eu \in \R^r$, $\| \eu \|_1=1$, $B>0$, we consider the difference:
  \begin{equation}
  \begin{split}
     \mathcal{Q}_n\big(\ebo+B  \sqrt{\frac{p}{n}} \eu;q\big)-\mathcal{Q}_n(\ebo;q)= n^{-1} \big(\mathcal{G}_n\big(\ebo+B  \sqrt{\frac{p}{n}} \eu;q\big)-\mathcal{G}_n(\ebo;q)\big) \\
    +\lambda_n \sum^p_{j=1} \widehat {\omega}_{n;j}(q) \big(\| \eb^0_j +B \sqrt{\frac{p}{n}} \eu_j \|_2 - \| \eb^0_j\|_2 \big).
    \label{loro}
    \end{split}
  \end{equation}
  By claim \textit{(i)}: $  \mathcal{G}_n\big(\ebo+B  (p/n)^{1/2} \eu;q\big)-\mathcal{G}_n(\ebo;q) =O_\PP(B^2p)$. 
  For the penalty of (\ref{loro}),  similarly to (\ref{tt}), using \textit{(i)}, assumptions (A5q) and  (\ref{E4.1}), taking also into account the equivalence of norms in finite dimension,  we have: $ \lambda_n \sum^p_{j=1} \widehat {\omega}_{n;j}(q) \big[\| \eb^0_j +B (p/n)^{1/2} \eu_j \|_2 - \| \eb^0_j\|_2 \big] \geq \lambda_n \sum^{p^0}_{j=1} \widehat {\omega}_{n;j}(q) \big[\| \eb^0_j +B (p/n)^{1/2} \eu_j \|_2 - \| \eb^0_j\|_2 \big]\geq -o_\PP \big( B pn^{-1}\big)$ and claim \textit{(ii)} follows. 
\end{proof}
\begin{proof}[\textbf{Proof of Theorem \ref{T4Lq1}}]
The idea of the proof is similar to that of Theorem \ref{T4}.	Let us consider the parameter sets ${\cal V}_p(\ebo) \equiv \big\{ \eb \in \R^p; \| \eb-\ebo\|_1 \leq B (p/n)^{1/2}\big\}$, with $B>0$ large enough and ${\cal W}_n \equiv \acc{\eb \in {\cal V}_p(\ebo) ; \|\eb_{{\cal A}^c}\|_1 >0}$.  Let us consider two parameter vectors: $\eb=(\eb_{\cal A}, \eb_{{\cal A}^c}) \in {\cal W}_n$ and $ {\eb}^{(1)}=({\eb}_{\cal A}^{(1)}, {\eb}_{{\cal A}^c}^{(1)}) \in {\cal V}_p(\ebo)$,   such that  ${\eb}^{(1)}_{\cal A}=\eb_{\cal A}$ and $ {\eb}_{{\cal A}^c}^{(1)}=\textbf{0}_{r-r^0}$ for which we study the difference: $
	  \mathcal{Q}_n\big(\eb ;q\big)-\mathcal{Q}_n(\eb^{(1)};q)= n^{-1}\big[ \mathcal{G}_n\big(\eb;q\big)-\mathcal{G}_n(\eb^{(1)};q)\big] +\lambda_n \sum^p_{j=p^0+1} \widehat {\omega}_{n;j}(q)   \| \eb_j \|_2   $.
By Lemma A1 of \cite{Hu.Chen.2021}
	\begin{equation}
	\label{eq12Lq}
	\begin{split}
	\sum_{i=1}^n \rho_\tau \big(\varepsilon_i - \sqrt{\frac{p}{n}} \eeX_i^\top \eu ;q\big)=&\sum_{i=1}^n \rho_\tau(\varepsilon_i ;q)+\sqrt{\frac{p}{n}}  \sum_{i=1}^n g_\tau(\varepsilon_i ; q) \eeX_i^\top \eu\\
	 &+\frac{\eE[h_\tau(\varepsilon ; q)]}{2} \frac{p}{n} \eu^\top \sum_{i=1}^n \eeX_i \eeX_i^\top \eu+ o_\PP(p^{1/2}).
	\end{split}
	\end{equation}
	Then, using  (\ref{eq12Lq}) we have, similarly to proof of Theorem \ref{T4}:
	$\mathcal{G}_n\big(\eb;q\big)-\mathcal{G}_n(\eb^{(1)};q)=\big[\mathcal{G}_n\big(\eb;q\big)-\mathcal{G}_n(\ebo;q) \big] -\big[\mathcal{G}_n(\eb^{(1)};q) - \mathcal{G}_n(\ebo;q)\big]  
	=\sum_{i=1}^n g_\tau(\varepsilon_i ; q) \eeX_{i,{\cal A}}(\eb - \ebo)_{\cal A} =O_\PP(p)$. Thus,  this last relation together with a   reasoning similar to (\ref{peni}), taking also into account   (\ref{E4.2}), lead to:
	$
	\mathcal{Q}_n\big(\eb ;q\big)-\mathcal{Q}_n(\eb^{(1)};q) =O_\PP(p/n)+\sum^p_{j=p^0+1} O_\PP \big(\lambda_n (p/n)^{(1-\gamma)/2}\big)=\sum^p_{j=p^0+1} O_\PP \big(\lambda_n ( p/n)^{(1-\gamma)/2}\big)$.
	Moreover, since $\mathcal{Q}_n\big(\ebo ;q\big)-\mathcal{Q}_n(\eb^{(1)};q) =O_\PP\big(p/n\big)$, using also condition (\ref{E4.2}), we have, with probability converging to one, $\mathcal{Q}_n\big(\eb ;q\big)-\mathcal{Q}_n(\eb^{(1)};q) \gg \mathcal{Q}_n\big(\ebo ;q\big)-\mathcal{Q}_n(\eb^{(1)};q)$. This implies $\lim_{n \rightarrow \infty} \PP[\widehat{\eb}_n(q) \in {\cal W}_n]=0$, from where $\lim_{n \rightarrow \infty} \PP[\widehat{\mathcal{A}}_n \subseteq {\cal A}]=1$. Finally, $\lim_{n \rightarrow \infty} \PP[{\cal A}  \subseteq \widehat{\mathcal{A}}_n ]=1$ is proved as in the proof of Theorem \ref{T4}.
\end{proof}

\begin{proof}[\textbf{Proof of Theorem \ref{T5Lq1}}]
 The proof,  similar to that of Theorem \ref{T5},  is omitted.
\end{proof}
 
\noindent  \subsubsection{Proofs of Subsection \ref{subsect_Lq2}}
\begin{proof}[\textbf{Proof of Lemma \ref{Lemma3.1_CSDA}}]
The   proof idea is similar to that of Theorem \ref{Th2.1_CSDA}\textit{(i)}. Consider $\eu \in \R^r$ such that $\| \eu \|_1=1$. Using Hölder's inequality: $|\eeX_i^\top \eu | \leq \| \eeX_i \|_\infty \| \eu \|_1$, relation (\ref{eq3Lq}) becomes:
  \begin{equation}
 \label{eq3LLq}
 0 < \Delta_2 =O_\PP\big(a_n^2 \mu_{h_{\tau}(q)} B^2 \sum^n_{i=1}  (\mathbb{X}_i^\top \eu)\big) \big(1+o_\PP(1) \big)=O_\PP(B^2 a^2_n n).
 \end{equation}
 With respect to the proof of Theorem  \ref{Th2.1_CSDA}\textit{(i)}, now $\Delta_1 \equiv \sum^n_{i=1}\big(\rho_\tau\big(\varepsilon_i -B a_n \mathbb{X}^\top_i  \eu ;q\big)- \rho_\tau(\varepsilon_i;q ) - \eE\big[ \rho_\tau\big(\varepsilon_i -B a_n \eeX^\top_i  \eu ;q\big)- \rho_\tau(\varepsilon_i ;q) \big]\big) $, $D_i \equiv \rho_\tau \big(\varepsilon_i - B a_n \eeX_i^\top \eu ;q\big) - \rho_\tau(\varepsilon_i;q) +B a_n g_\tau(\varepsilon_i;q) \eeX_i^\top \eu $. Relation (\ref{eq3bL}) becomes: $B a_n \sum^{n}_{i=1} g_\tau(\varepsilon_i;q) \eeX_i^\top \eu=O_\PP(B n^{1/2}a_n)$. 
 By  (\ref{A12}), we have:   $\eE\big[ \sum_{i=1}^n (D_i - \eE[D_i])\big]^2 \leq  T_1+T_2$. Regarding $T_1$, as in the proof of Theorem \ref{Th2.1_CSDA}(i): $T_1=O(a_n)=o(n^{1/2} a_n)$. We study now $T_2$.
  If $1 < q < 2$, then $T_2 \leq O(a_n^{2q} n)$ and  $\sum_{i=1}^n (D_i - \eE[D_i]) =O_\PP(a_n^{1/2})+O_\PP(a_n^q n^{1/2})= o_\PP(a_n n^{1/2})$.
  If $q >2$, then $T_2 \leq O(a_n^{4} n)$, from where $\sum_{i=1}^n (D_i - \eE[D_i]) =O_\PP(a_n^{1/2})+O_\PP(a_n^2 n^{1/2})= o_\PP(a_n n^{1/2})$.
  Therefore, in the two cases,  $ \sum_{i=1}^n (D_i - \eE[D_i])=o_\PP(a_n n^{1/2})$ 
and then: $\Delta_1 =- O_\PP(B n^{1/2} a_n) + o_\PP(a_n n^{1/2})= -  O_\PP(B n^{1/2} a_n)$.
In conclusion, since $n^{1/2} a_n \rightarrow \infty$, we have: $
\mathcal{G}_n\big(\ebo+B  a_n \eu;q\big)-\mathcal{G}_n(\ebo;q) =O_\PP(B^2 a_n^2 n) - O_\PP (B a_n n^{1/2})=O_\PP(B^2 a_n^2 n) >0$, with probability converging to 1 as $n \rightarrow \infty$ and for $B$ large enough.
\end{proof}
 \begin{proof}[\textbf{Proof of Theorem \ref{Theorem3.1_CSDA}}]
As in the proof of Theorem \ref{Th2.1_CSDA}\textit{(ii)}, we study the difference ${\cal Q}_n\big(\ebo+B  b_n \eu;q\big)-\mathcal{Q}_n(\ebo;q)$ for $\eu \in \R^r$, $\| \eu \|_1=1$ and $B>0$. By the proof of Lemma \ref{Lemma3.1_CSDA} we have with probability converging to 1 as $n \rightarrow \infty$ and for $B$ large enough:
 \begin{equation}
 \label{eq10Lq}
n^{-1}\big( \mathcal{G}_n\big(\ebo+B  b_n \eu;q\big)-\mathcal{G}_n(\ebo;q)\big) =O_\PP(B^2 b_n^2 ) >0.
  \end{equation}
  Similarly to (\ref{iel}), taking into account assumption (A5q), the equivalence of norms in finite dimension, we obtain:
\begin{equation}
\label{eq11Lq}
 \lambda_n \sum^p_{j=1} \widehat {\omega}_{n;j}(q) \big(\| \eb^0_j +B b_n \eu_j \|_2 - \| \eb^0_j\|_2 \big) \geq  -B (p^0)^{1/2} \lambda_n b_n .
\end{equation}
  By assumption $ \lambda_n (p^0)^{1/2} b_n^{-1}   \rightarrow 0$, then  $B^2 b^2_n  \gg B \lambda_n (p^0)^{1/2} b_n$ which implies that  (\ref{eq10Lq}) dominates (\ref{eq11Lq}). Theorem follows.
 \end{proof}
    
\begin{proof}[\textbf{Proof of Theorem \ref{Theorem3.2_CSDA_L2}}]  
 By the Karush-Kuhn-Tucker optimality  condition we have:
 \begin{equation}
\frac{1}{n} \sum_{i=1}^n g_\tau(Y_i - \eeX_i^\top \eb;q) \eX_{ij}=\lambda_n \widehat {\omega}_{n;j}(q)  \frac{\eb_j}{\| \eb_j\|_2}, \qquad \forall j \in \widehat{\cal A}_n,
 \label{KKT1}
 \end{equation}
 \textit{(i)} By Theorem \ref{Theorem3.1_CSDA} we get $\| \widehat{\eb}_n(q) -\ebo \|_1=O_\PP(b_n)$, with $b_n$ such that $n^{1/2} b_n \rightarrow \infty$ and $\lambda_n (p^0)^{1/2} b_n^{-1}   \rightarrow 0$.  For all $j \in  {\cal A}$, since $\eb^0_j \neq \textbf{0}_{d_j}$, we have  for $n$ large enough that $\| \widehat{\eb}_{n;j}(q) -\eb^0_j \|_1 \leq C b_n$ and then, $\PP[\widehat{\eb}_{n;j}(q) \neq \textbf{0}_{d_j}] {\underset{n \rightarrow \infty}{\longrightarrow}} 1$, which implies $\PP[j \in  \widehat{\cal A}_n ]{\underset{n \rightarrow \infty}{\longrightarrow}} 1 $. Thus, 
 \begin{equation}
 \label{eq13Lq2}
 \PP[{\cal A} \subseteq \widehat{\cal A}_n] {\underset{n \rightarrow \infty}{\longrightarrow}} 1.
 \end{equation}
 We prove now $\PP[\widehat{\cal A}_n \subseteq {\cal A} ] {\underset{n \rightarrow \infty}{\longrightarrow}} 1$, that is $\PP[\widehat{\cal A}_n  \cap {\cal A}^c =\emptyset ] {\underset{n \rightarrow \infty}{\longrightarrow}} 0$. \\
 \noindent  Let us consider $j \in \widehat{\cal A}_n  \cap {\cal A}^c$. Since $j \in\widehat{\cal A}_n $, then relation (\ref{KKT1}) holds.
 On the other hand, by elementary calculations, we have for $t \rightarrow 0$, $g_\tau(\varepsilon -t; q) =g_\tau(\varepsilon;q)+h_\tau(\varepsilon;q) t +o_\PP(t)$, which implies $
  \sum^n_{i=1} g_\tau(Y_i-\eeX_i^\top \eb;q) \eX_{ij}=\sum^n_{i=1} g_\tau(\varepsilon_i ;q)\eX_{ij} +\sum^n_{i=1} \eeX_i^\top (\eb -\ebo) h_\tau(\varepsilon_i ; q) \eX_{ij}+\sum^n_{i=1}  o_\PP \big(\eeX_i^\top (\eb -\ebo) h_\tau(\varepsilon_i ; q) \eX_{ij}  \big)$.
  Similarly to (\ref{eq3bL}), we have: $\sum^n_{i=1} g_\tau(\varepsilon_i ;q)\eX_{ij}=O_\PP(n^{1/2})$.\\
 \noindent  Similarly as the proof of  $\eE[h_\tau(\varepsilon;q)] < \infty$ in \cite{Hu.Chen.2021} we  show that   $\Var[h_\tau(\varepsilon;q)]<\infty$. Then, considering $\eu=\eb-\ebo$ with $\| \eu \|_1=O(b_n)$, using the Markov inequality together with assumptions (A1q), (A2), (A3), we have: $
 \sum^n_{i=1}  \eeX_i^\top \eu h_\tau(\varepsilon_i;q) \eX_{ij}=\eE  \big[\sum^n_{i=1}  \eeX_i^\top \eu h_\tau(\varepsilon_i;q) \eX_{ij} \big]$ $ +O_\PP \big( \big( \Var \big[\sum^n_{i=1}  \eeX_i^\top \eu h_\tau(\varepsilon_i;q) \eX_{ij} \big]\big)^{1/2}\big)
  =O(nb_n)+O_\PP(nb^2_n)=O_\PP(nb^2_n)$.
  Thus,
 \begin{equation}
 \label{eq14Lq}
 \sum^n_{i=1} g_\tau(Y_i-\eeX_i^\top \eb;q) \eX_{ij}=O_\PP(n^{1/2})+O_\PP(n b_n)=O_\PP(n b_n).
 \end{equation}
 On the other hand, since $j \in \widehat{\cal A}_n  \cap {\cal A}^c$, we have that $\lambda_n \widehat {\omega}_{n;j}(q) =\lambda_n \|\widetilde{\eb}_{n;j}(q) \|_2^{-\gamma}=\lambda_n O_\PP(a_n^{-\gamma})$. Since, $\lambda_na_n^{-\gamma}  b_n^{-1}\rightarrow \infty$, thus, this is in contradiction with (\ref{eq14Lq}), since the left-hand side is much smaller than  the one on the right of relation (\ref{KKT1}). Then, $\PP[\widehat{\cal A}_n  \cap {\cal A}^c =\emptyset ] {\underset{n \rightarrow \infty}{\longrightarrow}} 0$.\\
 \noindent Taking also into account (\ref{eq13Lq2}) we have then, $\PP[\widehat{\cal A}_n  = {\cal A}  ] {\underset{n \rightarrow \infty}{\longrightarrow}} 1$.\\
 \textit{(ii)} By Theorem \ref{Theorem3.1_CSDA} and \textit{(i)} of the present theorem, we consider the parameters of the form $\eb=\ebo +b_n \ed$, with $\ed \in \R^r$, $\ed=(\ed_{\cal A}, \ed_{{\cal A}^c}) $, $\ed_{{\cal A}^c}=\textbf{0}_{r-r^0}$, $\| \ed_{\cal A}\|_1 \leq C$. Then,
 \begin{equation}
 \label{eq18Lq}
 \mathcal{Q}_n\big(\ebo+b_n \ed ;q\big)-\mathcal{Q}_n(\ebo;q)=n^{-1}\sum^n_{i=1} \big( \rho_\tau(Y_i-\eeX_i^\top (\ebo+b_n \ed);q) - \rho_\tau(\varepsilon_i ;q) \big) +{\cal P},
 \end{equation}
 with ${\cal P}= \lambda_n \sum^{p^0}_{j=1}  \widehat {\omega}_{n;j}(q) \big( \|\eb^0_j +b_n \ed_j \|_2-\| \eb^0_j \|_2\big) $.   Relation (\ref{eq11Lq}) implies, using, the Cauchy-Schwarz inequality:
 \begin{equation}
 |{\cal P}| =O_\PP(\lambda_n b_n (p^0)^{1/2}).
   \label{eq15Lq}
 \end{equation}
 On the other hand, for the first term for the right-hand side of (\ref{eq18Lq}), we have:
 \begin{align}
 \sum^n_{i=1} \big( \rho_\tau(Y_i-\eeX_i^\top (\ebo+b_n \ed);q) - \rho_\tau(\varepsilon_i ;q) \big)=\sum^n_{i=1} \big( \rho_\tau(\varepsilon_i-b_n \eeX_{i, {\cal A}}^\top \ed_{\cal A};q) - \rho_\tau(\varepsilon_i ;q) \big)  \nonumber \\ 
  = - b_n \sum^n_{i=1} g_\tau(\varepsilon_i ; q) \eeX_{i, {\cal A}}^\top \ed_{\cal A} +\frac{b_n^2}{2} \sum^n_{i=1} \ed^\top_{\cal A} \eeX_{i, {\cal A}} \eeX_{i, {\cal A}}^\top \ed_{\cal A} \eE[h_\tau(\varepsilon_i;q)] \big(1+o_\PP(1)\big), \qquad \qquad 
 \label{eq16Lq}
  \end{align}
 which has as minimizer, the solution of: $
 -b_n \sum^n_{i=1} g_\tau(\varepsilon_i ; q) \eeX_{i, {\cal A}} +n b_n^2 \eU_{n,{\cal A}}  \ed_{\cal A} \eE[h_\tau(\varepsilon_i;q)] =\textbf{0}_{r^0}$,
 from where, using assumption (A3)
 \begin{equation}
 \label{eq17Lq}
 b_n \ed_{\cal A} =\frac{\eU^{-1}_{n,{\cal A}}}{\eE[h_\tau(\varepsilon_i;q)]} \cdot \frac{1}{n} \sum^n_{i=1} g_\tau(\varepsilon_i ; q) \eeX_{i, {\cal A}}.
 \end{equation}
 Thus, the minimum of (\ref{eq16Lq}) is $O(nb_n^2)$. Combining  assumption $ \lambda_n b_n^{-1}    (p^0)^{1/2}   {\underset{n \rightarrow \infty}{\longrightarrow}} 0$,   relation (\ref{eq15Lq}) and supposition (A3), we have that for $\mathcal{Q}_n\big(\ebo+b_n \ed ;q\big)-\mathcal{Q}_n(\ebo;q)$ of   (\ref{eq18Lq}) it is the first term of the right-hand side  which dominates. Then, the minimizer of (\ref{eq18Lq}) is (\ref{eq17Lq}).   Consider  $
 R_i \equiv \big(\eE[h_\tau(\varepsilon_i;q)]\big)^{-1} g_\tau(\varepsilon_i ; q)  \eU^{-1}_{n,{\cal A}}  \eu^\top \eeX_{i, {\cal A}}$  and the rest of the proof is similar
 to the proof of Theorem \ref{T5}.
\end{proof}


\begin{thebibliography}{3}
\bibitem[Chen(1996)]{Chen.1996}
Chen, Z., (1996). 
\newblock  Conditional $L_p$-quantiles and their application to the testing of symmetry in non-parametric regression. 
\newblock{\it  Statist. Probab. Lett.},
29(2), 1107--115.	
	
\bibitem[Ciuperca(2019)]{Ciuperca.2016}
Ciuperca, G., (2019). 
\newblock  Adaptive group LASSO selection in quantile models. 
\newblock{\it  Statist. Papers},
60(1), 173--197.

\bibitem[Ciuperca(2020)]{Ciuperca.2020}
Ciuperca, G., (2020). 
\newblock  Adaptive elastic-net selection in a quantile model with diverging number of variable groups. 
\newblock{\it  Statistics},
54(5), 1147--1170.

\bibitem[Ciuperca(2021)]{Ciuperca.2021}
Ciuperca, G., (2021). 
\newblock  Variable selection in high-dimensional linear model with possibly asymmetric errors. 
\newblock{\it  Comput. Statist. Data Anal.},
155, 107--112.

\bibitem[Dondelinger and Mukherjee(2020)]{Dondelinger.2000}
Dondelinger, F., Mukherjee, S., (2020). 
\newblock   The joint lasso: high-dimensional regression for group structured data.
\newblock{\it  Biostatistics},
21(2), 219--235.

\bibitem[Daouia et al.(2019)]{Daouia.19}
Daouia, A., Girard, S., Stupfler, G., (2019). 
\newblock Extreme M-quantiles as risk measures: from $L^1$ to $L^p$ optimization. 
\newblock  \textit{Bernoulli},  25(1),  264--309.

\bibitem[Geyer(1994)]{Geyer.1994}
Geyer, C. J., (1994). 
\newblock  On the asymptotics of constrained M-estimation.
\newblock{\it  Statistics},
22, 1993--2010.

\bibitem[Holzmann and Klar(2016)]{Holzmann.2016}
Holzmann, H., Klar, B., (2016). 
\newblock   Expectile asymptotics.
\newblock{\it  Electron. J. Stat.},
10(2), 2355--2371.

\bibitem[Hu et al.(2021)]{Hu.Chen.2021}
Hu, J., Chen, Y., Zhang, W., Guo, X., (2021). 
\newblock  Penalized high-dimensional M-quantile regression: From $L^1$ to $L^p$ optimization.
\newblock{\it  Canad. J. Statist.},
49(3),  875--905.

\bibitem[Knight and Fu(2000)]{KnightFu.2000}
Knight, K., Fu, W., (2000). 
\newblock  Asymptotics for lasso-type estimators.
\newblock{\it  Ann. Statist.},
28, 1356–1378.


\bibitem[Liao et al.(2019)]{Liao.2018}
Liao, L., Park, C., Choi, H., (2019). 
\newblock  Penalized expectile regression: an alternative to penalized quantile regression.
\newblock{\it  Ann. Inst. Statist. Math.},
71(2),  409--438.

\bibitem[Mendez-Civieta et al.(2021)]{Mendez.2021}
Mendez-Civieta, A., Aguilera-Morillo, C., Lillo, R.E., (2021). 
\newblock  Adaptive sparse group LASSO in quantile regression.
\newblock{\it  Adv. Data Anal. Classif.},
15(3),  547--573.

\bibitem[Tibshirani(1996)]{Tibshirani.1996}
Tibshirani, R., (1996).
\newblock Regression shrinkage and selection via the LASSO.
\newblock{\it J. R. Stat. Soc. Ser. B. Stat. Methodol.}, Ser. B, {  58}, 267--288.

\bibitem[Schulze-Waltrup et al.(2015)]{Waltrup.15}
Schulze-Waltrup, L., Sobotka, F., Kneib, T.,  Kauermann, G., (2015). 
\newblock Expectile and quantile regression—David and Goliath?. 
\newblock  \textit{Stat.  Model.},  15(5),  433--456.

\bibitem[Wang and Leng(2008)]{Wang-Leng-08}
Wang, H., Leng, C., (2008). 
\newblock A note on adaptive group lasso.
\newblock{\it Comput. Statist. Data Anal.}, {  52},  5277-5286.

\bibitem[Wang and Tian(2019)]{Wang-Tian-19}
Wang, M., Tian, G.L., (2019). 
\newblock Adaptive group Lasso for high-dimensional generalized linear models. 
\newblock  \textit{Statist. Papers}, 60 (5),  1469--1486.

\bibitem[Wang and Wang(2014)]{Wang-Wang.14}
Wang, M., Wang, X., (2014). 
\newblock Adaptive Lasso estimators for ultrahigh dimensional generalized linear models.
\newblock  \textit{Statist.  Probab. Lett.},  89,  41--50. 

\bibitem[Wu and Liu(2009)]{Wu.Liu.2009}
Wu, Y., Liu, Y., (2009). 
\newblock  Variable selection in quantile regression.
\newblock{\it  Statist. Sinica},
19, 801–817.

\bibitem[Zhang and Xiang(2016)]{Zhang-Xiang-16}
Zhang, C., Xiang, Y., (2016). 
\newblock On the oracle property of adaptive group LASSO in high-dimensional linear models.  
\newblock{\it Statist. Papers},  {57(1)}, 249-265.

\bibitem[Zhao et al.(2018)]{Zhao-Chen-Zhang.18}
Zhao, J., Chen, Y., Zhang, Y., (2018). 
\newblock Expectile regression for analyzing heteroscedasticity in high dimension. 
\newblock  \textit{Statist.  Probab. Lett.},  137,  304--311.

\bibitem[Zhou et al.(2019)]{Zhou.Zhou.2019}
Zhou, S., Zhou, J., Zhang, B., (2019). 
\newblock  Overlapping group lasso for high-dimensional generalized linear models.
\newblock{\it  Comm. Statist. Theory Methods},
48(19),  4903--4917.

\bibitem[Zou(2006)]{Zou.2006}
Zou, H., (2006). 
\newblock  The adaptive Lasso and its oracle properties.
\newblock{\it  J. Amer. Statist. Assoc. },
101(476),  1418--1428.
 
\end{thebibliography}
\end{document}